\setlist{noitemsep}
\setlist[enumerate]{label=(\arabic*)}
\let\oldbibliography\thebibliography
\renewcommand{\thebibliography}[1]{%
  \oldbibliography{}%
  \small%
  \setlength{\itemsep}{0pt}%
}
\def\@cite#1#2{[{{\bfseries#1}\if@tempswa , #2\fi}]} % in text
\renewcommand{\@biblabel}[1]{[{\bfseries{#1}}]~} % in bib
\theoremstyle{plain}
\newtheorem{theo}{Theorem}[section]
\newtheorem{prop}[theo]{Proposition}
\newtheorem{lemm}[theo]{Lemma}
\newtheorem{coro}[theo]{Corollary}
\theoremstyle{definition}
\newtheorem{defi}[theo]{Definition}
\newtheorem{exam}[theo]{Example}
\newtheorem{rema}[theo]{Remark}
\renewcommand{\leq}{\leqslant}
\renewcommand{\geq}{\geqslant}
\newcommand{\bbN}{\mathbb N}
\newcommand{\bbZ}{\mathbb Z}
\newcommand{\bbQ}{\mathbb Q}
\newcommand{\bbR}{\mathbb R}
\newcommand{\bbC}{\mathbb C}
\newcommand{\bbT}{\mathbb T}
\newcommand{\bbK}{\mathbb K}
\newcommand{\mcA}{\mathcal A}
\newcommand{\mcF}{\mathcal F}
\newcommand{\mcG}{\mathcal G}
\newcommand{\mcP}{\mathcal P}
\newcommand{\mcU}{U}
\newcommand{\mcV}{\mathcal V}
\newcommand{\mcW}{\mathcal W}
\newcommand{\mcD}{\mathcal D}
\newcommand{\mcO}{\mathcal O}
\newcommand{\mcL}{\mathcal L}
\newcommand{\mcT}{\mathcal T}
\newcommand{\mcX}{\mathcal X}
\newcommand{\LBrun}{\mathcal L^\mathsf{Br}}
\newcommand{\LJP}{\mathcal L^\mathsf{JP}}
\newcommand{\FBrun}{\mathcal F^\mathsf{Br}}
\newcommand{\GBrun}{\mathcal G^\mathsf{Br}}
\newcommand{\GBrunmini}{\widetilde{\mathcal G}^\mathsf{Br}}
\newcommand{\HBrun}{\mathcal H^\mathsf{Br}}
\newcommand{\FJP}{\mathcal F^\mathsf{JP}}
\newcommand{\GJP}{\mathcal G^\mathsf{JP}}
\newcommand{\HJP}{\mathcal H^\mathsf{JP}}
\newcommand{\VBrun}{V^\mathsf{Br}}
\newcommand{\VJP}{V^\mathsf{JP}}
\newcommand{\sBrun}{\sigma^\mathsf{Br}}
\newcommand{\sJP}{\sigma^\mathsf{JP}}
\newcommand{\SBrun}{\Sigma^\mathsf{Br}}
\newcommand{\SJP}{\Sigma^\mathsf{JP}}
\newcommand{\bfM}{\mathbf M}
\newcommand{\bfP}{\mathbf P}
\newcommand{\bfE}{\mathbf E}
\newcommand{\bfe}{\mathbf e}
\newcommand{\bfv}{\mathbf v}
\newcommand{\bfx}{\mathbf x}
\newcommand{\bfy}{\mathbf y}
\newcommand{\Gv}{\Gamma_\bfv}
\newcommand{\transp}[1]{{{}^\textup{\textbf t} #1}}
\newcommand{\EOS}{{{\mathbf E}_1^\star}}
\newcommand{\EOSS}{{{\mathbf E}_1^\star(\sigma)}}
\newcommand{\Ms}{{\mathbf M}_\sigma}
\newcommand{\Msinv}{{\mathbf M}^{-1}_\sigma}
\newcommand{\pic}{{\pi_{\textup{c}}}}
\DeclareMathOperator{\rad}{rad}
\newcommand{\myvcenter}[1]{\ensuremath{\vcenter{\hbox{#1}}}}
\DeclareTextFontCommand{\tdef}{\itshape\bfseries}
\newcommand{\svect}[3]{%
\Big(\begin{smallmatrix}%
#1 \\ #2 \\ #3%
\end{smallmatrix}\Big)%
}
\title{\textbf{A combinatorial approach to \\ products of Pisot substitutions}}
\author[1]{Val\'erie Berth\'e}
\author[2]{J\'er\'emie Bourdon}
\author[1,3]{Timo Jolivet}
\author[4]{Anne Siegel}
\affil[1]{%
    LIAFA, CNRS, Universit\'e Paris Diderot, Case 7014, 75205 Paris Cedex 13, France
}
\affil[2]{%
    LINA, Universit\'e de Nantes, 2 rue de la Houssini\`ere, 44322 Nantes Cedex, France
}
\affil[3]{%
    FUNDIM, Department of Mathematics, University of Turku 20014, Finland
}
\affil[4]{%
    INRIA, CNRS, Univ. Rennes 1, IRISA, Campus de Beaulieu, 35042 Rennes Cedex, France
}
\date{}
\begin{document}
\maketitle

\vspace{-10mm}
\begin{abstract}
We define a generic algorithmic framework to prove pure discrete spectrum
for the substitutive symbolic dynamical systems associated
with some infinite families of Pisot substitutions.
We focus on the families obtained as finite products of the three-letter substitutions
associated with the multidimensional continued fraction algorithms of Brun and Jacobi-Perron.

Our tools consist in a reformulation of some combinatorial criteria (coincidence conditions),
in terms of properties of discrete plane generation using multidimensional (dual) substitutions.
We also deduce some topological and dynamical properties of the Rauzy fractals,
of the underlying symbolic dynamical systems,
as well as some number-theoretical properties of the associated Pisot numbers.
\end{abstract}
{\small
\tableofcontents
}

\section{Introduction}

Symbolic substitutions are morphisms of the free monoid
which are often used to generate one-dimensional infinite words and symbolic dynamical systems.
They play a prominent role among symbolic dynamical systems with zero entropy,
and are intimately connected with first return maps and self-induced systems
(see for instance~\cite{Fog02,Que10} for more details).
Substitutions can also be defined in the framework of higher-dimensional tiling spaces~\cite{Sol97,Rob04}.

According to the \emph{Pisot substitution conjecture} (see e.g.~\cite{Fog02,BK06,CANT,ABBLS}),
the symbolic dynamical system associated with any irreducible substitution whose expansion factor is a Pisot number
is conjectured to have pure discrete spectrum.
This property is equivalent to being measure-theoretically isomorphic to a translation on a compact abelian group.
Rauzy introduced in~\cite{Rau82} an explicit construction to establish this property
by defining a bounded compact set with fractal boundary (now called the \emph{Rauzy fractal} of the substitution)
as a candidate for a fundamental domain for the underlying translation.
This approach belongs to a wider scientific program which consists in studying
how to code dynamical systems of an arithmetic nature as symbolic systems
preserving their arithmetic properties~\cite{Sidorov},
and conversely, how symbolic dynamical systems can provide good simultaneous approximation
algorithms in Diophantine approximation~\cite{BFZ05}.
Following this line of research, the study of Rauzy fractals and Pisot substitutions has led to many developments
in various domains, including symbolic dynamics and combinatorics on words,
numeration dynamics, hyperbolic dynamics, fractal topology and number theory
(see~\cite{Fog02,CANT} and the references therein).

It is now well established that the pure discrete spectrum property is algorithmically decidable for
a given irreducible Pisot substitution in the unimodular case, that is,
when the abelianization matrix of the substitution has determinant~$\pm 1$~\cite{SieT09,CANT,AL11,AL14}.
Nonetheless, no general strategy exists to check whether the pure discrete spectrum property holds
for all the substitutions within a given \emph{infinite} family.
A particular type of infinite families of substitutions can be obtained by fixing a finite set of substitutions $S$
and by considering all the finite products of substitutions over $S$;
we call them \emph{product families}.
This is for example how Arnoux-Rauzy substitutions are constructed~\cite{AR91}.
The main goal of the present paper is to pursue Rauzy's program by introducing
a \emph{generic computational framework to prove the pure discrete spectrum property}
for some product families of three-letter substitutions associated with
two-dimensional continued fraction algorithms defined as piecewise fractional maps
(according to the formalism developed in~\cite{Bre81,Sch00};
see also ~\cite{Ber11} for a discussion on the way substitutions are associated with continued fraction algorithms).
We focus here on the Brun and Jacobi-Perron continued fraction algorithms,
and we derive generic methods as well as dynamical, topological and number-theoretical implications.

Discrete spectrum results for infinite families have been established
for two-letter unimodular irreducible Pisot substitutions~\cite{BD02,HS03},
and for Arnoux-Rauzy substitutions (see also~\cite{BJS13,BJS12,BSW13}).
Our framework  has also been successfully applied to a problem in discrete geometry
about the critical thickness at which an arithmetic discrete plane is $2$-connected~\cite{BJJP13},
in connection with the ordered fully subtractive continued fraction algorithm.
The modified Jacobi-Perron substitutions are studied as well in~\cite{FIY13}using a similar approach
(the modified Jacobi-Perron algorithm is a two-point extension of the Brun algorithm).
Recently, the pure discrete spectrum property was also
announced to be proved for some infinite families of substitutions satisfying some simple combinatorial
requirements on their first and last letters~\cite{Barge14},
which includes $\beta$-substitutions associated with simple Parry numbers (see also~\cite{Aki00,BBK06})
and some of the families of substitutions considered below.

\paragraph{Combinatorial methods}
Our strategy relies on two main ingredients.
The first one is a combinatorial definition of Rauzy fractals, introduced in~\cite{AI01},
as the Hausdorff limit of planar (in the three-letter case) compact sets
obtained by projecting and renormalizing some three-dimensional objects (``patterns'')
consisting of finite unions of faces of unit cubes placed at integer coordinates.
These patterns are obtained by iterating a geometric, higher-dimensional substitution $\EOSS$.
According to~\cite{AI01}, it can be seen as a \emph{dual substitution} of the original (symbolic) substitution $\sigma$.
A striking fact is that the resulting three-dimensional patterns obtained by iterating $\EOSS$
on the three faces of the lower half unit cube $\mcU := \input{fig/U.tex}$ placed at the origin
always lie within a \emph{discrete plane}~\cite{AI01,Fer09}.
By \emph{discrete plane}, we mean a discretized version of a Euclidean plane;
see Section~\ref{subsec:dissub} for a precise definition.

The second ingredient is the following characterization of the pure discrete spectrum property,
due to~\cite{IR06}, stated in terms of generation of discrete planes by dual substitutions:
the symbolic dynamical system generated by a unimodular irreducible Pisot substitution $\sigma$ has a pure discrete spectrum
if and only if the patterns generated by iterating the dual substitution $\EOSS$ of $\sigma$
cover arbitrarily large disks in the discrete plane when starting with the initial pattern $\mcU$.
This geometric and combinatorial criterion, which we call the \emph{arbitrarily large disks covering property},
is one of the numerous formulations of the so-called \emph{coincidence conditions}
used in substitutive dynamics to prove the pure discrete spectrum property.
A survey is given in~\cite{CANT,ABBLS}, see also the references in~\cite{AL11}.
Note that we focus here on the $\bbZ$-action of the shift of the substitutive symbolic dynamical system,
and not on the $\bbR$-action of the substitutive tiling flow.
However, for irreducible Pisot substitutions, pure discreteness of both actions are equivalent~\cite{Clark-Sadun:03}.

This characterization of pure discrete spectrum allows the study
of finite products of substitutions over $S$ in the following way.
First, we prove a stronger result which applies not only to substitutive words, but also to $S$-adic words,
that is, to the limits of infinite products of substitutions~\cite{BD13}:
the arbitrarily large disks covering property holds when iterating
any admissible \emph{infinite} sequence $(\sigma_{i_n})_{n \geq 1}$ of substitutions over the finite set $S$.
\emph{Admissible} refers to the possible constraints on the allowed infinite sequences,
which occur from example with the substitutions associated with some continued fraction algorithms.
Then, for a given \emph{finite} product $\sigma=\sigma_{i_1} \sigma_{i_2} \cdots \sigma_{i_k}$ over $S$
the same result follows directly by considering the infinite
periodic infinite sequence $(\sigma_{i_1} \sigma_{i_2} \cdots \sigma_{i_k})^\infty$.
Examples of such sequences with the arbitrarily large disks property are shown
in Figure~\ref{fig:gen_goodbad}~p.~\pageref{fig:gen_goodbad}.

To prove the arbitrarily large disks covering property,
we formalize and generalize the approach developed in~\cite{IO94},
where discrete plane generation using dual Jacobi-Perron substitutions is studied.
The intuitive idea is to prove that iterating sufficiently many substitutions
eventually generates a topological annulus made of faces around the initial pattern,
and that these annuli are preserved under iteration.
Several new computational tools are introduced in this article,
including generation graphs to prove that annuli are always eventually generated.
The strategy is described in detail in Section~\ref{subsec:mcfgen_strategy},
and more precise statements are given below and in Section~\ref{sect:mainresults}.

\paragraph{Main results}

We focus on two families of substitutions,
the substitutions associated with the Brun algorithm~\cite{Bru58},
and the substitutions associated with the Jacobi-Perron algorithm~\cite{Sch73}.
Note that some of the results of this paper have been announced in the extended abstract~\cite{BBJS13}.
Our first results concern discrete plane generation
using the dual substitutions $\EOSS$  associated with these multidimensional continued fraction algorithms.
They are stated in Section~\ref{sect:mainresults}.
\begin{itemize}
\item
In \textbf{Theorem~\ref{theo:seeds}} we prove that there exist finite patterns $V$
(``seeds'', not much larger than the three-face pattern $\mcU$)
such that iterating Brun or Jacobi-Perron substitutions from $V$ generates an entire discrete plane
(\emph{i.e.}, arbitrarily large balls centered at the origin).
\item
In \textbf{Theorem~\ref{theo:bad_Brun}} (Brun) and \textbf{Theorem~\ref{theo:bad_JP}} (Jacobi-Perron)
we characterize the infinite sequences for which the pattern $\mcU$ does not suffice to generate an entire discrete plane.
These characterizations are given in terms of finite state automata with few vertices,
which are obtained algorithmically.
\item
In \textbf{Theorem~\ref{theo:balls}} we prove that translates of arbitrarily large disks
always appear in the images of $\mcU$.
\end{itemize}
The above results allow us to deduce several corollaries,
of topological, dynamical and number-theoretical nature,
including our original motivation to prove pure discrete spectrum.
\begin{itemize}
\item
In \textbf{Corollary~\ref{coro:fractal_zero}} we characterize all the finite products of Brun or Jacobi-Perron substitutions
for which the origin is not an inner point of the Rauzy fractal (this corresponds to the case where $U$ is not a seed).
Connectedness results for the corresponding Rauzy fractals are also obtained in \textbf{Corollary~\ref{coro:fractal_connected}}.
These topological properties are related with various number-theoretical properties,
as discussed in Section~\ref{subsec:applis_nt}.
\item
In \textbf{Corollary~\ref{coro:dynprod}} we prove the pure discrete spectrum property
for every admissible finite product of Brun or Jacobi-Perron substitutions.
In \textbf{Corollary~\ref{coro:markov}}
we obtain explicit Markov partitions with connected atoms
for the toral automorphisms associated with the incidence matrices of the substitutions.
\item
In \textbf{Theorem~\ref{theo:cubicfrac}} we prove that for
every cubic number field $\bbK$, there exist generators $\alpha,\beta$ with $\bbK = \bbQ(\alpha,\beta)$
and a three-letter unimodular irreducible Pisot substitution $\sigma$ such that
the toral translation on $\bbT^2$ by $(\alpha,\beta)$ is measure-theoretically conjugate
to the symbolic dynamical system generated by $\sigma$.
The corresponding Rauzy fractal provides a fundamental domain for the translation,
a partition for a natural coding, and bounded remainder sets.
\end{itemize}

An important consequence of the present framework is that it paves the way for the study of $S$-adic words.
In particular, our results are a key ingredient for providing
symbolic representations of two-dimensional toral translations.
Indeed, Brun $S$-adic words are proved in ~\cite{BST14} (based on the results of the present paper)
to provide a symbolic natural coding for~\emph{almost every} given toral translation,
and not only the ones with algebraic parameters.
More precisely, the pure discrete spectrum property for the Brun $S$-adic shift is proved
for almost all of these shifts, and conversely,
almost every two-dimensional toral translation admits a symbolic natural coding
which is provided by the Brun continued fraction algorithm.

\paragraph{Organization of the paper}
Substitutions (and dual substitutions), discrete planes, Rauzy fractals,
as well as the Brun and Jacobi-Perron continued fractions algorithms
are defined in Section~\ref{sect:prelim}. The strategy used for generating discrete planes
is then elaborated in Section~\ref{sect:gen} as generically as possible,
in the sense that it can be applied to other types of continued fraction algorithms.
An outline of the strategy is first given in Section~\ref{subsec:mcfgen_strategy}.
Strong coverings are introduced in Section~\ref{subsec:coverings},
sufficient combinatorial criteria to establish the annulus property are given in Section~\ref{subsec:annulus_property},
and the construction of generation graphs is described in Section~\ref{subsec:generation_graphs}.
Section~\ref{sect:tech_proofs} is devoted to more technical aspects
and to proofs which are specific to the Brun and Jacobi-Perron substitutions.

Our main results are then stated in Section~\ref{sect:mainresults}.
Results about discrete plane generation
using Brun and Jacobi-Perron substitutions are given in Section~\ref{subsec:generation}.
These results are applied in Sections~\ref{subsec:applis_topo},~\ref{subsec:applis_dyn}~and~\ref{subsec:applis_nt}
to prove various properties of the dynamical substitution systems associated with finite products of
Brun and Jacobi-Perron substitutions, as well as other implications of number-theoretical nature.

\paragraph{Computer proofs}
Many of the results of Section~\ref{sect:tech_proofs}
have been proved using the Sage mathematics software system~\cite{Sage}.
The corresponding Sage code is available on the arXiv preprint page
of the current article (\href{http://arxiv.org/abs/1401.0704}{\texttt{arXiv:1401.0704}}),
as an attached file (``ancillary file'').

A Sage implementation of dual substitutions
has been used to perform exhaustive enumerations of small patterns,
in order to prove some properties of the families of substitutions under study
in Sections~\ref{subsec:seeds},~\ref{subsec:covprop}~and~\ref{subsec:propA}.
Another type of results for which computer algebra was used is the algorithmic construction of generation graphs,
in Sections~\ref{subsec:graph_Brun} and~\ref{subsec:graph_JP}.

\paragraph{Acknowledgements}
We would like to thank Pierre Arnoux, Maki Furukado and Shunji Ito for numerous fruitful discussions on this topic.
This work was supported by Agence Nationale de la Recherche and the Austrian Science Fund
through project Fractals and Numeration ANR-12-IS01-0002 and project Dyna3S ANR-13-BS02-0003.

\section{Preliminaries}
\label{sect:prelim}

\subsection{Discrete planes and substitutions}
\label{subsec:dissub}

\paragraph{Substitutions}
Let $\mcA = \{1, \ldots, n\}$ be a finite set of symbols.
We work here mainly with three-letter alphabets ($n=3$).
A \tdef{substitution} is a non-erasing morphism of the free monoid $\mcA^\star$
($\sigma(a)$ is a non-empty word for every $a \in \mcA$).
We denote by $\bfP : \mcA^\star \rightarrow \bbN^n$
the \tdef{abelianization map} defined by
$\bfP(w) = (|w|_1, \ldots ,|w|_n)$,
where $|w|_i$ stands for the number of occurrences of $i$ in $w$.
The \tdef{incidence matrix} $\Ms$ of $\sigma$ is the matrix of size $n \times n$
whose $i$th column is equal to $\bfP(\sigma(i))$ for every $i \in \mcA$.
A substitution $\sigma$ is \tdef{unimodular} if $\det \Ms = \pm 1$,
and it is \tdef{irreducible Pisot} if the characteristic polynomial of $\Ms$
is the minimal polynomial of a Pisot number, that is,
a real algebraic integer larger than $1$ whose other conjugates are smaller than $1$ in modulus.
It can be proved that every irreducible Pisot substitution is \tdef{primitive},
that is, there exists a positive power of its incidence matrix~\cite{CS01}.

\paragraph{Discrete planes}
We denote by $(\bfe_1, \bfe_2, \bfe_3)$ the canonical basis of $\bbR^3$.
Before defining discrete planes we introduce (pointed)
\tdef{faces} $[\bfx,i]^\star$, which are defined as subsets of $\bbR^3$ by
\definecolor{facecolor}{rgb}{0.8,0.8,0.8}
\begin{align*}
\lbrack \bfx, 1 \rbrack^\star & = \{\bfx + \lambda \bfe_2 + \mu \bfe_3 : \lambda,\mu \in [0,1] \} =
    \myvcenter{\begin{tikzpicture}
    [x={(-0.216506cm,-0.125000cm)}, y={(0.216506cm,-0.125000cm)}, z={(0.000000cm,0.250000cm)}]
    \fill[fill=facecolor, draw=black, shift={(0,0,0)}]
    (0, 0, 0) -- (0, 1, 0) -- (0, 1, 1) -- (0, 0, 1) -- cycle;
    \node[circle,fill=black,draw=black,minimum size=1.2mm,inner sep=0pt] at (0,0,0) {};
    \end{tikzpicture}} \\
\lbrack \bfx, 2 \rbrack^\star & = \{\bfx + \lambda \bfe_1 + \mu \bfe_3 : \lambda,\mu \in [0,1] \} =
    \myvcenter{\begin{tikzpicture}
    [x={(-0.216506cm,-0.125000cm)}, y={(0.216506cm,-0.125000cm)}, z={(0.000000cm,0.250000cm)}]
    \fill[fill=facecolor, draw=black, shift={(0,0,0)}]
    (0, 0, 0) -- (0, 0, 1) -- (1, 0, 1) -- (1, 0, 0) -- cycle;
    \node[circle,fill=black,draw=black,minimum size=1.2mm,inner sep=0pt] at (0,0,0) {};
    \end{tikzpicture}} \\
\lbrack \bfx, 3 \rbrack^\star & = \{\bfx + \lambda \bfe_1 + \mu \bfe_2 : \lambda,\mu \in [0,1] \} =
    \myvcenter{\begin{tikzpicture}
    [x={(-0.216506cm,-0.125000cm)}, y={(0.216506cm,-0.125000cm)}, z={(0.000000cm,0.250000cm)}]
    \fill[fill=facecolor, draw=black, shift={(0,0,0)}]
    (0, 0, 0) -- (1, 0, 0) -- (1, 1, 0) -- (0, 1, 0) -- cycle;
    \node[circle,fill=black,draw=black,minimum size=1.2mm,inner sep=0pt] at (0,0,0) {};
    \end{tikzpicture}}
\end{align*}
where $i \in \{1,2,3\}$ is the \tdef{type} of $[\bfx,i]^\star$,
and $\bfx \in \bbZ^3$ is the \tdef{vector} of $[\bfx,i]^\star$.
In this paper we will refer to collections of faces as unions of faces,
and by abuse of language we will often say that a face $f$ belongs to a union of faces
even if $f$ is in fact included in it.
The notation $\bfx+[\bfy,i]^\star$ stands for $[\bfx+\bfy,i]^\star$.

We now define discrete planes.
Denote by $\langle \cdot, \cdot \rangle $ the usual scalar product.
Let $\bfv \in \bbR^3_{>0}$.
The \tdef{discrete plane $\Gv$ of normal vector $\bfv$} is the union of faces $[\bfx, i]^\star$, with $i \in \{1,2,3\}$ and
$\bfx \in \bbZ^3$ satisfying
$0 \leq \langle \bfx, \bfv \rangle < \langle \bfe_i, \bfv \rangle$.

More intuitively, $\Gv$ can also be seen as the boundary of
the union of the unit cubes with integer coordinates
that intersect the lower half-space $\{\bfx \in \bbR^3 : \langle \bfx, \bfv \rangle < 0\}$.
The set of its vertices in $\bbZ^3$ corresponds to the classic notion of a standard arithmetic discrete
plane in discrete geometry~\cite{Rev91}.

Observe that the lower half unit cube
$\mcU = [\mathbf 0,1]^\star \cup [\mathbf 0,2]^\star \cup [\mathbf 0,3]^\star = \input{fig/U.tex}$
is included in every discrete plane since the coordinates of the normal vector $\mathbf v$ of a discrete plane $\Gv$
are assumed to be positive.

A \tdef{pattern} is a finite union of (pointed) faces.
In order to express the fact that some patterns grow by applying (dual) substitutions,
we need to define the \tdef{minimal combinatorial radius} $\rad(P)$
of a pattern $P$ containing $\mcU $.
It is equal to the length
of the shortest sequence of faces $f_1, \ldots, f_n$ in $P$ such that
$f_1 \in \mcU$,
$f_i$ and $f_{i+1}$ share an edge,
and $f_n$ shares an edge with the boundary of $P$.
Intuitively, $\rad(P)$ measures the minimal distance between $\mathbf 0$ and the boundary of $P$.
A sequence of patterns $(P_n)_{n \geq 1}$ is said to \tdef{cover arbitrarily large disks}
if the sequence of bounded sets obtained as their orthogonal projections
onto the antidiagonal plane $\bfx_1+\bfx_2+\bfx_3 = 0$ covers arbitrarily large disks in this latter plane.
In particular, if the minimal combinatorial radius of a sequence of patterns $(P_n)_{n \geq 1}$ tends to infinity,
then the sequence of patterns $(P_n)_{n \geq 1}$ covers arbitrarily large disks centered at the origin.

\paragraph{Dual substitutions}

Let $\sigma$ be a unimodular substitution.
The \tdef{dual substitution} $\EOSS$ is defined for any face $[\bfx, i]^\star$ as
\[
\EOSS([\bfx, i]^\star) \ = \
\bigcup_{(p,j,s) \in \mcA^\star \times \mcA \times \mcA^\star \ : \ \sigma(j) = pis} [\Msinv (\bfx + \bfP(s)), j]^\star.
\]
We extend this definition to unions of faces:
$\EOSS(P\cup Q) = \EOSS(P) \cup \EOSS(Q)$.

The term ``dual'' comes from the fact that $\EOS(\sigma)$
was originally introduced in~\cite{AI01} as the dual of a geometric realization of $\sigma$ as a linear map
in an $n$-dimensional vector space (where $n$ is the size of the alphabet of $\sigma$).
This is where the formula given in the above definition comes from.
A more general setting is introduced in~\cite{SAI01},
where the linear maps $\bfE_k$ and $\bfE_k^\star$ are introduced for every $k \in \{0, \ldots, n\}$.
Intuitively, $\bfE_k(\sigma)$ acts on $k$-dimensional objects
and its dual $\bfE_k^\star(\sigma)$ acts on $(n-k)$-dimensional objects.
Some specific examples of dual substitutions are given at the end of
Sections~\ref{subsec:Brun} and ~\ref{subsec:JP}.
Note also that the notion of a dual substitution has been successfully extended to the setting of
tiling flows ($\bbR$-actions) associated with substitutions and under the formalism of strand spaces in~\cite{BK06}.

Basic properties of dual substitutions are summarized in the proposition below.
The first statement ensures that composition behaves well.
The second statement can be interpreted as a form of ``linearity'' of $\EOS$,
and allows us to specify a mapping $\EOS$ simply by providing $\Ms$
and the images of $[\mathbf 0,1]^\star, [\mathbf 0,2]^\star, [\mathbf 0,3]^\star$.
The last two statements establish fundamental links between
discrete planes and dual substitutions, which will be used throughout this paper.

\begin{prop}[\cite{AI01,Fer06}]
\label{prop:imgplane}
Let $\sigma$ be a unimodular substitution.
We have:
\begin{enumerate}
\item
$\EOS(\sigma \circ \sigma') = \EOS(\sigma') \circ \EOS(\sigma)$
for every unimodular substitution $\sigma'$;
\item
\label{imgplanestatement2}
$\EOSS([\bfx,i]^\star) = \Msinv \bfx + \EOS([\mathbf 0, i]^\star)$
for every face $[\bfx,i]^\star$;
\item
$\EOSS(\Gv) = \Gamma_{\transp \Ms \bfv}$
for every discrete plane $\Gv$;
\item
if $f$ and $g$ are distinct faces in a common discrete plane $\Gv$,
then $\EOSS(f) \cap \EOSS(g)$ contains no face.
\end{enumerate}
\end{prop}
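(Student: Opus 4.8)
The plan is to combine statements (2) and (3) with a careful analysis of what the formula for $\EOSS$ produces on a single face. First I would reduce to the case where one of the two faces is at the origin. By statement (3) we know $\EOSS(\Gv) = \Gamma_{\transp\Ms\bfv}$, and since $\EOSS$ is defined face-by-face, the image $\EOSS(\Gv)$ is the union of the pieces $\EOSS(f)$ as $f$ ranges over the faces of $\Gv$; as $\transp\Ms\bfv$ again has positive coordinates (here one uses that $\Ms$ is the incidence matrix of a substitution, hence has nonnegative entries, together with unimodularity to rule out degeneracy — actually one should invoke primitivity or simply that $\EOSS$ maps a discrete plane to a discrete plane, which is exactly statement (3)), the target $\Gamma_{\transp\Ms\bfv}$ is itself a genuine discrete plane in which every face appears exactly once. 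So the statement we must prove is: the decomposition $\Gamma_{\transp\Ms\bfv} = \bigcup_{f \subset \Gv} \EOSS(f)$ has no overlap at the level of faces, i.e.\ distinct $f,g$ contribute disjoint (up to lower-dimensional sets) face-sets.

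The key computation is the following. Writing $f = [\bfx,i]^\star$ and using statement (2), $\EOSS(f) = \Msinv\bfx + \EOS([\mathbf 0,i]^\star)$, and the faces occurring in $\EOS([\mathbf 0,i]^\star)$ are the $[\Msinv\bfP(s),j]^\star$ for all decompositions $\sigma(j) = pis$. Thus a face of type $j$ occurring in $\EOSS([\bfx,i]^\star)$ sits at vector $\Msinv(\bfx + \bfP(s))$. Suppose the same face $[\bfz,j]^\star$ occurs in both $\EOSS([\bfx,i]^\star)$ and $\EOSS([\bfy,k]^\star)$, coming from decompositions $\sigma(j) = p\,i\,s$ and $\sigma(j) = p'\,k\,s'$. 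Then $\Msinv(\bfx+\bfP(s)) = \Msinv(\bfy+\bfP(s')) = \bfz$, so since $\Ms$ is invertible we get $\bfx + \bfP(s) = \bfy + \bfP(s')$. The crucial point is that $p\,i\,s$ and $p'\,k\,s'$ are \emph{two prefixes-with-marked-letter of the same word $\sigma(j)$}: either the marked occurrence of $i$ sits at the same position in $\sigma(j)$ as that of $k$ — in which case $i=k$, $p=p'$, $s=s'$ and hence $\bfx = \bfy$, so the two source faces coincide — or, say, the $i$ lies strictly to the left of the $k$. In the latter case $s' $ is a proper suffix of $s$ with $s = (\text{something containing }k)\,s'$, more precisely $p' = p\,i\,(\text{a nonempty word }w)$ with $\sigma(j) = p\,i\,w\,k\,s'$ and $s = w\,k\,s'$; then $\bfP(s) - \bfP(s') = \bfP(w) + \bfe_k$, and the vector equation $\bfx+\bfP(s) = \bfy+\bfP(s')$ gives $\bfy = \bfx + \bfP(w) + \bfe_k$. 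One then checks that $[\bfx,i]^\star$ and $[\bfy,k]^\star$ with this relation between their vectors and types cannot both lie in a single discrete plane $\Gv$: evaluating the defining inequalities $0 \le \langle\bfx,\bfv\rangle < \langle\bfe_i,\bfv\rangle$ and $0 \le \langle\bfy,\bfv\rangle < \langle\bfe_k,\bfv\rangle$ and subtracting yields $\langle \bfP(w)+\bfe_k, \bfv\rangle < \langle\bfe_k,\bfv\rangle - 0$ on one side, forcing $\langle\bfP(w),\bfv\rangle < 0$, which is impossible since $\bfv$ has positive coordinates and $\bfP(w) \in \bbN^3$. Hence $f = g$, contradicting distinctness, so no shared face can occur.

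I would organise the write-up as: (i) reduce to showing the face-decomposition of $\Gamma_{\transp\Ms\bfv}$ induced by $\EOSS$ is non-overlapping, quoting (2) and (3); (ii) set up the bookkeeping identifying the faces of $\EOSS([\bfx,i]^\star)$ with marked decompositions $\sigma(j)=pis$; (iii) run the case analysis above on two decompositions producing the same output face, extracting the vector identity $\bfx+\bfP(s)=\bfy+\bfP(s')$; (iv) in the non-trivial case, derive the contradiction with the discrete-plane inequalities for $\Gv$. The main obstacle is step (iii)--(iv): one must handle the combinatorics of the two marked factorisations cleanly (the symmetric roles of $i,k$, and correctly identifying the intervening word $w$), and then see that the inequalities defining $\Gv$ are exactly what is needed to preclude the overlap. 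It is worth remarking that this is precisely the place where the hypothesis ``$f$ and $g$ lie in a \emph{common} discrete plane'' is used — without it the two images genuinely can share a face — and that the argument does not use the Pisot or primitivity hypotheses, only unimodularity and nonnegativity of $\Ms$.
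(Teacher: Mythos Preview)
The paper does not supply its own proof of Proposition~\ref{prop:imgplane}: the result is quoted from \cite{AI01,Fer06} and used as a black box. So there is no in-paper argument to compare against.

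That said, your proof of statement~(4) is correct and is essentially the standard argument from those references. The core is exactly right: a shared output face $[\bfz,j]^\star$ forces two marked factorisations $\sigma(j)=pis=p'ks'$ of the \emph{same} word, and invertibility of $\Ms$ gives $\bfx+\bfP(s)=\bfy+\bfP(s')$; if the marked letters differ in position, say $i$ strictly left of $k$ with $s=wks'$, then $\bfy=\bfx+\bfP(w)+\bfe_k$ and one computes
\[
\langle\bfy,\bfv\rangle=\langle\bfx,\bfv\rangle+\langle\bfP(w),\bfv\rangle+\langle\bfe_k,\bfv\rangle\ \geq\ \langle\bfe_k,\bfv\rangle,
\]
contradicting $\langle\bfy,\bfv\rangle<\langle\bfe_k,\bfv\rangle$. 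Two minor remarks on the write-up: first, your step (i) invoking statement~(3) and positivity of $\transp\Ms\bfv$ is not actually needed for (4) --- the contradiction lives entirely in the inequalities for the \emph{source} plane $\Gv$, so you can streamline by dropping that discussion. Second, your closing observation that only unimodularity and nonnegativity of $\Ms$ are used (not Pisot or primitivity) is correct and worth keeping.
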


\subsection{Brun substitutions}
\label{subsec:Brun}

Let $\bfv \in \bbR_{\geq 0}^3$ such that $0 \leq \bfv_1 \leq \bfv_2 \leq \bfv_3$.
The \tdef{Brun algorithm}~\cite{Bru58}
is one of the possible natural generalizations of Euclid's algorithm.
Together with the Jacobi-Perron algorithm, it is one of the most classical multidimensional continued fraction algorithms;
they both are defined as piecewise fractional maps according the formalism developed in~\cite{Bre81,Sch00}.
We consider here the additive version of this algorithm, which can be defined as follows in its linear form:
subtract the second largest component of $\bfv$ from the largest and reorder the result.
This yields for the algorithm in ordered form:
\[
\bfv \ \mapsto \
\begin{cases}
    (\bfv_1, \ \bfv_2, \ \bfv_3-\bfv_2)
        & \text{if} \ \bfv_1 \leq  \bfv_2 \leq \bfv_3-\bfv_2  \\
    (\bfv_1, \ \bfv_3-\bfv_2, \ \bfv_2)
        & \text{if} \ \bfv_1 \leq \bfv_3-\bfv_2 < \bfv_2  \\
    (\bfv_3-\bfv_2, \ \bfv_1, \ \bfv_2)
        & \text{if} \ \bfv_3-\bfv_2  <  \bfv_1 \leq  \bfv_2.
\end{cases}
\]
The interest of working with the additive version of the algorithm
is that we will recover a finite set of associated substitutions, as described below.
Iterating this map starting from $\bfv^{(0)} = \bfv$ yields
an infinite sequence of vectors $(\bfv^{(n)})_{n \in {\mathbb N}}$
and the algorithm can be rewritten in matrix form:
\begin{equation}
\bfv^{(n)} = \bfM_{i_n}^{-1}\bfv^{(n-1)},
\end{equation}
where $i_n \in \{1,2,3\}$ and $\bfM_1$, $\bfM_2$ and $\bfM_3$
are the corresponding non-negative integer matrices such that
% whose inverses are given by
%\begin{align*}
%\bfM_1^{-1} &=
%\begin{pmatrix}
%1 & 0 & 0 \\
%0 & 1 & 0 \\
%0 & -1 & 1
%\end{pmatrix}
%&
%\bfM_2^{-1} &=
%\begin{pmatrix}
%1 & 0 & 0 \\
%0 & -1 & 1 \\
%0 & 1 & 0
%\end{pmatrix}
%&
%\bfM_3^{-1} &=
%\begin{pmatrix}
%0 & -1 & 1 \\
%1 & 0 & 0 \\
%0 & 1 & 0
%\end{pmatrix}.
%\end{align*}
$\bfM_1^{-1}$ is applied if $ \bfv_1 \leq \bfv_2 \leq \bfv_3-\bfv_2 $,
$\bfM_2^{-1}$ is applied if $\bfv_1 \leq \bfv_3-\bfv_2 < \bfv_2$,
and $\bfM_3^{-1}$ is applied otherwise.
We thus have $\bfv^{} = \bfM_{i_1} \cdots \bfM_{i_n}\bfv^{(n)}$ for all $n$.
We stress on the fact that the matrices $\bfM_i$ are non-negative,
which is needed to associate substitutions with them.

The \tdef{Brun expansion} of a vector $\bfv \in \bbR_{\geq 0}^3$
with $0 \leq \bfv_1 \leq \bfv_2 \leq \bfv_3$
is the infinite sequence $(i_n)_{n \geq 1}$ obtained above.
For example, if $\bfv = (1, \beta, \beta^2)$, with $\beta \approx 3.21$ being
the dominant Pisot cubic root of $x^3 - 3x^2 - x + 1$,
then $(i_n) = 1131~132~132\ldots$ is an eventually periodic sequence
containing infinitely many $1$s, $2$s and $3$s.

%\begin{exam}
%\label{exam:brun_exp}
%~
%\begin{itemize}
%\item
%If $\bfv = (1, 2, 5)$ then $(i_n) = 12132111\ldots$ is an eventually constant sequence
%containing infinitely many $1$s but finitely many $2$s and $3$s.
%\item
%If $\bfv = (1, 2, \sqrt{5})$ then $(i_n) = 313~1112~1112\ldots$ is an eventually periodic sequence
%containing infinitely many $1$s and $2$s but finitely many $3$s.
%\item
%If $\bfv = (1, 2, e)$ then $(i_n) = 313212221112\ldots$ is an aperiodic sequence
%containing infinitely many $1$s and $2$s but finitely many $3$s.
%\item
%If $\bfv = (1, \beta, \beta^2)$, where $\beta \approx 3.21$ is the dominant Pisot cubic root of $x^3 - 3x^2 - x + 1$,
%then $(i_n) = 1131~132~132\ldots$ is an eventually periodic sequence
%containing infinitely many $1$s, $2$s and $3$s.
%\item
%If $\bfv = (1, e, \pi)$ then $(i_n) = 31232331211113\ldots$
%contains infinitely many $1$s, $2$s and $3$s (provided $e$ and $\pi$ are linearly independent).
%\end{itemize}
%\end{exam}

%As suggested by Example~\ref{exam:brun_exp},
%the Brun algorithm is able to ``detect'' totally irrational vectors, as stated in the proposition below.

\begin{prop}[\cite{Bru58}]
\label{prop:Brun_conv}
The Brun expansion of $\bfv \in \bbR_{\geq 0}^3$ contains infinitely many $3$s if and only if $\bfv$ is totally irrational.
Moreover, for every expansion $(i_n)_{n \geq 1} \in \{1,2,3\}^\bbN$ containing infinitely many $3$s,
there is a unique vector $\bfv$ whose Brun expansion is $(i_n)_{n \geq 1}$.
\end{prop}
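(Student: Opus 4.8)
The plan is to prove the two statements of Proposition~\ref{prop:Brun_conv} by analysing the behaviour of the ordered Brun map $T$ and its induced dynamics on the projective simplex.

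First I would set up the right coordinates. Since the Brun algorithm acts by positive matrices and we only care about the expansion $(i_n)$, we may normalize and view $T$ as a map of the $2$-simplex $\Delta = \{\bfv \in \bbR_{\geq 0}^3 : \bfv_1 \leq \bfv_2 \leq \bfv_3,\ \bfv_1 + \bfv_2 + \bfv_3 = 1\}$ (or any convenient section), with the three branches corresponding to $i_n \in \{1,2,3\}$. The matrices $\bfM_1,\bfM_2,\bfM_3$ are explicit, and $\bfv = \bfM_{i_1}\cdots\bfM_{i_n}\bfv^{(n)}$ for all $n$; this cocycle relation is the workhorse for both directions. I would record the elementary fact that each $\bfM_i$ has nonnegative entries and that the product $\bfM_{i_1}\cdots\bfM_{i_n}$ has all entries tending to $\infty$ precisely when the symbol $3$ occurs infinitely often — indeed the matrix $\bfM_3$ (the one that permutes coordinates in the third branch) is what mixes the first coordinate back into the others, whereas $\bfM_1$ and $\bfM_2$ fix the first coordinate of $\bfv$, so a tail consisting only of $1$s and $2$s forces $\bfv_1^{(n)} = \bfv_1$ constant, which quickly pins down $\bfv$.

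For the first statement (``infinitely many $3$s $\iff$ totally irrational''), the forward-contrapositive direction is the easy one: if $(i_n)$ has only finitely many $3$s, then after some index $N$ the components $\bfv_1^{(n)}$ is constant equal to a fixed value $c$; since on the $\{1,2\}$-branches $\bfv^{(n)}$ eventually stays in a region where the Euclidean-type subtraction acts only on the last two coordinates, $\bfv$ is forced to satisfy a rational linear relation (for instance $\bfv_1$ rationally related to $\bfv_2,\bfv_3$ via the finitely many matrices used), so $\bfv$ is not totally irrational. Conversely, if $\bfv$ is not totally irrational, there is a nonzero integer vector $\bfn$ with $\langle \bfn, \bfv\rangle = 0$; transporting this relation through the cocycle, $\langle \transp{(\bfM_{i_1}\cdots\bfM_{i_n})}\bfn, \bfv^{(n)}\rangle = 0$, and one shows the integer vectors $\transp{(\bfM_{i_1}\cdots\bfM_{i_n})}\bfn$ cannot stay nonzero with bounded ``defect'' unless the symbol $3$ is used only finitely often — essentially because each application of $\bfM_3^{-1}$ strictly decreases a suitable norm/height of the relation while keeping it integral, and an integral positive quantity cannot decrease forever. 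This height-decrease argument is the technical heart of the first equivalence.

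For the second statement (unique $\bfv$ for each admissible expansion with infinitely many $3$s), the strategy is the standard ``nested cones'' / contraction argument. The set of $\bfv \in \Delta$ with prescribed prefix $i_1 \cdots i_n$ is the image under $\bfM_{i_1}\cdots\bfM_{i_n}$ (projectivized) of $\Delta$ itself; these are nested closed cones $C_n \supseteq C_{n+1}$, so $\bigcap_n C_n$ is nonempty, and existence is immediate. For uniqueness I would invoke the Birkhoff contraction coefficient: a positive matrix contracts the Hilbert projective metric on $\Delta$ strictly, and since $\bfM_3$ (hence any product containing it) is positive, the product $\bfM_{i_1}\cdots\bfM_{i_n}$ has Hilbert-diameter of its image tending to $0$ as $n \to \infty$ — using that $3$ appears infinitely often to get infinitely many positive factors. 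Hence $\bigcap_n C_n$ is a single point, giving uniqueness; this point has Brun expansion $(i_n)$ by construction since the branch taken at each step is determined by which cone one lies in. The main obstacle I anticipate is making the ``height-decrease'' mechanism in the converse of statement~(1) fully rigorous: one must argue that no matter how the $1$s and $2$s are interleaved, a genuine application of the $3$-branch reduces the integral rational relation strictly, and that the relation cannot be destroyed (stay integral and nonzero), which requires a careful look at the explicit entries of $\bfM_1,\bfM_2,\bfM_3$ and their transposes rather than a soft argument.
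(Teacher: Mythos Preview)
The paper does not prove this proposition; it is quoted from Brun's 1958 paper with a bare citation, so there is no in-paper argument to compare your sketch against.

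Your outline has the right architecture, but two of the mechanisms are not correct as stated. In ``finitely many $3$s $\Rightarrow$ not totally irrational'', observing that $\bfv_1^{(n)}$ is eventually a constant $c$ does not by itself yield a rational relation among the $\bfv_i$: the number $c$ is an integer linear combination of $\bfv_1,\bfv_2,\bfv_3$, but nothing forces it to be rational. The missing step is that on a $\{1,2\}$-tail the sum $\bfv_1^{(n)}+\bfv_2^{(n)}+\bfv_3^{(n)}$ drops by $\bfv_2^{(n)}\geq c$ at every step, which is impossible unless $c=0$; only then is $\bfv_1^{(N)}=0$ your nontrivial integer relation. For the converse, the transported relation evolves as $n^{(k)}=\transp{\bfM_{i_k}}\,n^{(k-1)}$, and these transposes \emph{increase} the obvious heights (each has a row summing to $2$); your phrase ``each application of $\bfM_3^{-1}$ strictly decreases a suitable norm'' names the wrong matrix and the wrong monotonicity. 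This implication needs a genuinely different mechanism --- in the classical treatment it is routed through the convergence of the algorithm, not through a height drop on the relation vector.

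For uniqueness, the Hilbert-metric plan is standard, but $\bfM_3$ is \emph{not} a positive matrix (three of its entries vanish), so the claim ``since $\bfM_3$ (hence any product containing it) is positive'' is false, and a single occurrence of $3$ gives no contraction. What one actually has to check is that admissible blocks containing enough $3$s become strictly positive, and that along any sequence with infinitely many $3$s this happens with Birkhoff coefficients bounded away from $1$. That verification is where the real content of the uniqueness argument lies; it does not reduce to positivity of a single factor.
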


The \tdef{Brun substitutions} are defined by
\begin{align*}
\sBrun_1 &:
\begin{cases}
1 \mapsto 1 \\ 2 \mapsto 2 \\3 \mapsto 32
\end{cases}
&
\sBrun_2 &:
\begin{cases}
1 \mapsto 1 \\ 2 \mapsto 3 \\3 \mapsto 23
\end{cases}
&
\sBrun_3 &:
\begin{cases}
1 \mapsto 2 \\ 2 \mapsto 3 \\3 \mapsto 13
\end{cases}
\end{align*}
and we denote their associated dual substitutions by $\SBrun_i = \EOS(\sBrun_i)$ for $i \in \{1,2,3\}$.
They can be explicitly computed using the definition of dual substitutions, as shown below.
\begin{align*}
\definecolor{facecolor}{rgb}{0.8,0.8,0.8}
\SBrun_1 &: \left\{\hspace{-5pt}
    \begin{tabular}{rcl}%
    \myvcenter{%
    \begin{tikzpicture}
    [x={(-0.173205cm,-0.100000cm)}, y={(0.173205cm,-0.100000cm)}, z={(0.000000cm,0.200000cm)}]
    \fill[fill=facecolor, draw=black, shift={(0,0,0)}]
    (0, 0, 0) -- (0, 1, 0) -- (0, 1, 1) -- (0, 0, 1) -- cycle;
    \node[circle,fill=black,draw=black,minimum size=1.2mm,inner sep=0pt] at (0,0,0) {};
    \end{tikzpicture}}%
     & \myvcenter{$\mapsto$} &
    \myvcenter{%
    \begin{tikzpicture}
    [x={(-0.173205cm,-0.100000cm)}, y={(0.173205cm,-0.100000cm)}, z={(0.000000cm,0.200000cm)}]
    \fill[fill=facecolor, draw=black, shift={(0,0,0)}]
    (0, 0, 0) -- (0, 1, 0) -- (0, 1, 1) -- (0, 0, 1) -- cycle;
    \node[circle,fill=black,draw=black,minimum size=1.2mm,inner sep=0pt] at (0,0,0) {};
    \end{tikzpicture}} \\
    \myvcenter{%
    \begin{tikzpicture}
    [x={(-0.173205cm,-0.100000cm)}, y={(0.173205cm,-0.100000cm)}, z={(0.000000cm,0.200000cm)}]
    \fill[fill=facecolor, draw=black, shift={(0,0,0)}]
    (0, 0, 0) -- (0, 0, 1) -- (1, 0, 1) -- (1, 0, 0) -- cycle;
    \node[circle,fill=black,draw=black,minimum size=1.2mm,inner sep=0pt] at (0,0,0) {};
    \end{tikzpicture}}%
     & \myvcenter{$\mapsto$} &
    \myvcenter{%
    \begin{tikzpicture}
    [x={(-0.173205cm,-0.100000cm)}, y={(0.173205cm,-0.100000cm)}, z={(0.000000cm,0.200000cm)}]
    \fill[fill=facecolor, draw=black, shift={(0,0,0)}]
    (0, 0, 0) -- (0, 0, 1) -- (1, 0, 1) -- (1, 0, 0) -- cycle;
    \fill[fill=facecolor, draw=black, shift={(0,0,0)}]
    (0, 0, 0) -- (1, 0, 0) -- (1, 1, 0) -- (0, 1, 0) -- cycle;
    \node[circle,fill=black,draw=black,minimum size=1.2mm,inner sep=0pt] at (0,0,0) {};
    \end{tikzpicture}} \\
    \myvcenter{%
    \begin{tikzpicture}
    [x={(-0.173205cm,-0.100000cm)}, y={(0.173205cm,-0.100000cm)}, z={(0.000000cm,0.200000cm)}]
    \fill[fill=facecolor, draw=black, shift={(0,0,0)}]
    (0, 0, 0) -- (1, 0, 0) -- (1, 1, 0) -- (0, 1, 0) -- cycle;
    \node[circle,fill=black,draw=black,minimum size=1.2mm,inner sep=0pt] at (0,0,0) {};
    \end{tikzpicture}}%
     & \myvcenter{$\mapsto$} &
    \myvcenter{%
    \begin{tikzpicture}
    [x={(-0.173205cm,-0.100000cm)}, y={(0.173205cm,-0.100000cm)}, z={(0.000000cm,0.200000cm)}]
    \draw[thick, densely dotted] (0,0,0) -- (0,1,0);
    \fill[fill=facecolor, draw=black, shift={(0,1,0)}]
    (0, 0, 0) -- (1, 0, 0) -- (1, 1, 0) -- (0, 1, 0) -- cycle;
    \node[circle,fill=black,draw=black,minimum size=1.2mm,inner sep=0pt] at (0,0,0) {};
    \end{tikzpicture}}
    \end{tabular}
\right.
&
\SBrun_2 &: \left\{\hspace{-5pt}
    \begin{tabular}{rcl}%
    \myvcenter{%
    \begin{tikzpicture}
    [x={(-0.173205cm,-0.100000cm)}, y={(0.173205cm,-0.100000cm)}, z={(0.000000cm,0.200000cm)}]
    \fill[fill=facecolor, draw=black, shift={(0,0,0)}]
    (0, 0, 0) -- (0, 1, 0) -- (0, 1, 1) -- (0, 0, 1) -- cycle;
    \node[circle,fill=black,draw=black,minimum size=1.2mm,inner sep=0pt] at (0,0,0) {};
    \end{tikzpicture}}%
     & \myvcenter{$\mapsto$} &
    \myvcenter{%
    \begin{tikzpicture}
    [x={(-0.173205cm,-0.100000cm)}, y={(0.173205cm,-0.100000cm)}, z={(0.000000cm,0.200000cm)}]
    \fill[fill=facecolor, draw=black, shift={(0,0,0)}]
    (0, 0, 0) -- (0, 1, 0) -- (0, 1, 1) -- (0, 0, 1) -- cycle;
    \node[circle,fill=black,draw=black,minimum size=1.2mm,inner sep=0pt] at (0,0,0) {};
    \end{tikzpicture}} \\
    \myvcenter{%
    \begin{tikzpicture}
    [x={(-0.173205cm,-0.100000cm)}, y={(0.173205cm,-0.100000cm)}, z={(0.000000cm,0.200000cm)}]
    \fill[fill=facecolor, draw=black, shift={(0,0,0)}]
    (0, 0, 0) -- (0, 0, 1) -- (1, 0, 1) -- (1, 0, 0) -- cycle;
    \node[circle,fill=black,draw=black,minimum size=1.2mm,inner sep=0pt] at (0,0,0) {};
    \end{tikzpicture}}%
     & \myvcenter{$\mapsto$} &
    \myvcenter{%
    \begin{tikzpicture}
    [x={(-0.173205cm,-0.100000cm)}, y={(0.173205cm,-0.100000cm)}, z={(0.000000cm,0.200000cm)}]
    \draw[thick, densely dotted] (0,0,0) -- (0,1,0);
    \fill[fill=facecolor, draw=black, shift={(0,1,0)}]
    (0, 0, 0) -- (1, 0, 0) -- (1, 1, 0) -- (0, 1, 0) -- cycle;
    \node[circle,fill=black,draw=black,minimum size=1.2mm,inner sep=0pt] at (0,0,0) {};
    \end{tikzpicture}} \\
    \myvcenter{%
    \begin{tikzpicture}
    [x={(-0.173205cm,-0.100000cm)}, y={(0.173205cm,-0.100000cm)}, z={(0.000000cm,0.200000cm)}]
    \fill[fill=facecolor, draw=black, shift={(0,0,0)}]
    (0, 0, 0) -- (1, 0, 0) -- (1, 1, 0) -- (0, 1, 0) -- cycle;
    \node[circle,fill=black,draw=black,minimum size=1.2mm,inner sep=0pt] at (0,0,0) {};
    \end{tikzpicture}}%
     & \myvcenter{$\mapsto$} &
    \myvcenter{%
    \begin{tikzpicture}
    [x={(-0.173205cm,-0.100000cm)}, y={(0.173205cm,-0.100000cm)}, z={(0.000000cm,0.200000cm)}]
    \fill[fill=facecolor, draw=black, shift={(0,0,0)}]
    (0, 0, 0) -- (0, 0, 1) -- (1, 0, 1) -- (1, 0, 0) -- cycle;
    \fill[fill=facecolor, draw=black, shift={(0,0,0)}]
    (0, 0, 0) -- (1, 0, 0) -- (1, 1, 0) -- (0, 1, 0) -- cycle;
    \node[circle,fill=black,draw=black,minimum size=1.2mm,inner sep=0pt] at (0,0,0) {};
    \end{tikzpicture}}
    \end{tabular}
\right.
&
\SBrun_3 &: \left\{\hspace{-5pt}
    \begin{tabular}{rcl}%
    \myvcenter{%
    \begin{tikzpicture}
    [x={(-0.173205cm,-0.100000cm)}, y={(0.173205cm,-0.100000cm)}, z={(0.000000cm,0.200000cm)}]
    \fill[fill=facecolor, draw=black, shift={(0,0,0)}]
    (0, 0, 0) -- (0, 1, 0) -- (0, 1, 1) -- (0, 0, 1) -- cycle;
    \node[circle,fill=black,draw=black,minimum size=1.2mm,inner sep=0pt] at (0,0,0) {};
    \end{tikzpicture}}%
     & \myvcenter{$\mapsto$} &
    \myvcenter{%
    \begin{tikzpicture}
    [x={(-0.173205cm,-0.100000cm)}, y={(0.173205cm,-0.100000cm)}, z={(0.000000cm,0.200000cm)}]
    \draw[thick, densely dotted] (0,0,0) -- (0,1,0);
    \fill[fill=facecolor, draw=black, shift={(0,1,0)}]
    (0, 0, 0) -- (1, 0, 0) -- (1, 1, 0) -- (0, 1, 0) -- cycle;
    \node[circle,fill=black,draw=black,minimum size=1.2mm,inner sep=0pt] at (0,0,0) {};
    \end{tikzpicture}} \\
    \myvcenter{%
    \begin{tikzpicture}
    [x={(-0.173205cm,-0.100000cm)}, y={(0.173205cm,-0.100000cm)}, z={(0.000000cm,0.200000cm)}]
    \fill[fill=facecolor, draw=black, shift={(0,0,0)}]
    (0, 0, 0) -- (0, 0, 1) -- (1, 0, 1) -- (1, 0, 0) -- cycle;
    \node[circle,fill=black,draw=black,minimum size=1.2mm,inner sep=0pt] at (0,0,0) {};
    \end{tikzpicture}}%
     & \myvcenter{$\mapsto$} &
    \myvcenter{%
    \begin{tikzpicture}
    [x={(-0.173205cm,-0.100000cm)}, y={(0.173205cm,-0.100000cm)}, z={(0.000000cm,0.200000cm)}]
    \fill[fill=facecolor, draw=black, shift={(0,0,0)}]
    (0, 0, 0) -- (0, 1, 0) -- (0, 1, 1) -- (0, 0, 1) -- cycle;
    \node[circle,fill=black,draw=black,minimum size=1.2mm,inner sep=0pt] at (0,0,0) {};
    \end{tikzpicture}} \\
    \myvcenter{%
    \begin{tikzpicture}
    [x={(-0.173205cm,-0.100000cm)}, y={(0.173205cm,-0.100000cm)}, z={(0.000000cm,0.200000cm)}]
    \fill[fill=facecolor, draw=black, shift={(0,0,0)}]
    (0, 0, 0) -- (1, 0, 0) -- (1, 1, 0) -- (0, 1, 0) -- cycle;
    \node[circle,fill=black,draw=black,minimum size=1.2mm,inner sep=0pt] at (0,0,0) {};
    \end{tikzpicture}}%
     & \myvcenter{$\mapsto$} &
    \myvcenter{%
    \begin{tikzpicture}
    [x={(-0.173205cm,-0.100000cm)}, y={(0.173205cm,-0.100000cm)}, z={(0.000000cm,0.200000cm)}]
    \fill[fill=facecolor, draw=black, shift={(0,0,0)}]
    (0, 0, 0) -- (0, 0, 1) -- (1, 0, 1) -- (1, 0, 0) -- cycle;
    \fill[fill=facecolor, draw=black, shift={(0,0,0)}]
    (0, 0, 0) -- (1, 0, 0) -- (1, 1, 0) -- (0, 1, 0) -- cycle;
    \node[circle,fill=black,draw=black,minimum size=1.2mm,inner sep=0pt] at (0,0,0) {};
    \end{tikzpicture}}.
    \end{tabular}
\right.
\end{align*}
These substitutions and the Brun algorithm are linked by
$\smash{\bfM_i = \transp\bfM_{\sBrun_i}}$ for $i \in \{1,2,3\}$,
where the $\bfM_i$ are the matrices associated with the Brun algorithm above.
By virtue of Proposition~\ref{prop:imgplane}, it follows that, if $\bfv \in \bbR_{> 0}^3$
with $ 0 < \bfv_1 \leq \bfv_2 \leq \bfv_3$, then $\SBrun_i(\Gamma_\bfv) = \Gamma_{\bfM_i \bfv}$.
An explicit link between the patterns $(\Sigma_{i_1} \cdots \Sigma_{i_n}(U))_{n \geq 1}$
and the discrete plane $\Gv$ (where the expansion of $\bfv$ is $(i_n)_{n \geq 1}$)
is given in Theorem~\ref{theo:bad_Brun}.
Concrete instances of computations using dual substitutions
are found in Example~\ref{exam:badannulus},
and in many of the proofs of Section~\ref{sect:tech_proofs}.

In this paper we are interested in the finite products $\sigma = \sBrun_{i_1} \cdots \sBrun_{i_n}$
such that the dominant eigenvalue of $\Ms$ is a cubic Pisot number.
Such finite products are characterized in Proposition~\ref{prop:Brun_exp} below.
Note that such a simple characterization does not necessarily exist
for general product families of substitutions.

\begin{prop}[\cite{AD13}]
\label{prop:Brun_exp}
For every $(i_1, \ldots, i_n) \in \{1,2,3\}^n$,
the product $\sBrun_{i_1} \cdots \sBrun_{i_n}$ is irreducible Pisot
if and only if $i_k = 3$ at least once.
\end{prop}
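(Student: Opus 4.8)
The plan is to prove the two implications separately; the forward one is immediate, while the converse requires the specific combinatorics of the three Brun matrices.

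\emph{Forward implication.} I argue by contraposition. If $i_k\in\{1,2\}$ for every $k$, then $\sBrun_1(1)=\sBrun_2(1)=1$ forces the product $\sigma:=\sBrun_{i_1}\cdots\sBrun_{i_n}$ to fix the letter $1$, so the first column of $\Ms$ is $\bfe_1$ and $1$ is an eigenvalue of $\Ms$. Hence $x-1$ divides the characteristic polynomial of $\Ms$, which is therefore reducible; a reducible cubic polynomial cannot be the minimal polynomial of a Pisot number (the latter being irreducible of degree equal to the matrix size, here $3$), so $\sigma$ is not irreducible Pisot.

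\emph{Converse implication: setup and irreducibility.} Assume some $i_k=3$, set $\sigma:=\sBrun_{i_1}\cdots\sBrun_{i_n}$, and write $B:=\bfM_{i_n}\bfM_{i_{n-1}}\cdots\bfM_{i_1}$ for the product of the Brun matrices in reverse order. Since $\bfM_{\sBrun_i}=\transp{\bfM_i}$, we have $\Ms=\transp{B}$, so $\Ms$ and $B$ have the same characteristic polynomial and eigenvalues, and one is primitive iff the other is. A routine inspection of the letter-reachability graphs of $\sBrun_1,\sBrun_2,\sBrun_3$ (all having no empty row or column, with $\sBrun_3$ primitive since $\bfM_{\sBrun_3}^4>0$) shows that the presence of a $3$ makes $B$ primitive; hence $B$ has a simple dominant eigenvalue $\beta>1$ with a positive eigenvector, its two other eigenvalues $\alpha,\gamma$ satisfy $|\alpha|,|\gamma|<\beta$, and $\det B=\pm1$ (each $\bfM_i$ being unimodular) gives $|\alpha\gamma|=1/\beta<1$. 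Now the periodic sequence $(i_n\cdots i_1)^\infty$ contains infinitely many $3$s, so Proposition~\ref{prop:Brun_conv} provides, up to scaling, a unique totally irrational vector $\bfv$ with this Brun expansion (no nonzero integer vector is orthogonal to $\bfv$); the expansion being purely periodic, $\bfv=\bfM_{i_n}\cdots\bfM_{i_1}\bfv^{(n)}=B\bfv^{(n)}$ with $\bfv^{(n)}$ a positive multiple of $\bfv$, so $\bfv$ is the Perron eigenvector of $B$ and $B\bfv=\beta\bfv$. If the characteristic polynomial of $B$ were reducible, then -- $\det B=\pm1$ forcing any rational root to be $\pm1$ -- the matrix $\transp{B}=\Ms$ would have a nonzero \emph{integer} eigenvector $\mathbf q$ for some $\varepsilon\in\{-1,1\}$, whence
\[
(\beta-\varepsilon)\langle\mathbf q,\bfv\rangle
=\langle\mathbf q,B\bfv\rangle-\varepsilon\langle\mathbf q,\bfv\rangle
=\langle\transp{B}\,\mathbf q,\bfv\rangle-\varepsilon\langle\mathbf q,\bfv\rangle
=\varepsilon\langle\mathbf q,\bfv\rangle-\varepsilon\langle\mathbf q,\bfv\rangle=0,
\]
so $\langle\mathbf q,\bfv\rangle=0$ (as $\beta\neq\pm1$), contradicting the total irrationality of $\bfv$. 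Thus the characteristic polynomial of $\Ms$ is irreducible.

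\emph{Converse implication: the Pisot property, and the main obstacle.} It remains to show $|\alpha|,|\gamma|<1$. If $\alpha,\gamma\notin\bbR$ they form a complex conjugate pair and $|\alpha|=|\gamma|=|\alpha\gamma|^{1/2}=\beta^{-1/2}<1$, so we are done. The delicate case is $\alpha,\gamma\in\bbR$: irreducibility excludes $\alpha,\gamma\in\{-1,1\}$, and $|\alpha\gamma|<1$ shows at most one of them has modulus $>1$, so the task reduces to ruling out a single real conjugate of modulus $>1$ (which would make $\beta$ a non-Pisot cubic unit). This is the one point where the precise form of $\bfM_{\sBrun_1},\bfM_{\sBrun_2},\bfM_{\sBrun_3}$ is genuinely used: one shows that the characteristic polynomial $x^3-(\operatorname{tr}B)\,x^2+sx\mp1$ of $B$ (with $s$ the sum of the $2\times2$ principal minors) satisfies, for every Brun product containing a $3$, the classical inequalities characterising a Pisot dominant root -- equivalently, that Brun's algorithm converges exponentially (``strongly'') at the purely periodic point $\bfv$, a property that on the periodic orbit is exactly equivalent to $\max(|\alpha|,|\gamma|)<1$. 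Carrying out this finite verification is the main difficulty of the statement, and it is the part for which we invoke~\cite{AD13}.
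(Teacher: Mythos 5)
Your forward implication and the irreducibility half of the converse are fine: the orthogonality trick (a rational root of the characteristic polynomial of $B$ must be $\pm 1$ by unimodularity, giving an integer eigenvector of $\Ms=\transp B$ orthogonal to the totally irrational Perron direction supplied by Proposition~\ref{prop:Brun_conv} applied to the purely periodic expansion $(i_n\cdots i_1)^\infty$) is a clean and correct way to get irreducibility. But the statement you were asked to prove is that the product is irreducible \emph{Pisot}, and the Pisot bound $\max(|\alpha|,|\gamma|)<1$ is exactly the part you do not prove: you reduce to it and then ``invoke~[AD13]''. That is circular for this exercise, since the paper itself offers no proof of Proposition~\ref{prop:Brun_exp} beyond the citation to~[AD13] — the eigenvalue bound \emph{is} the content of that reference, and any self-contained argument must supply it. Your suggested substitute, namely that Brun ``converges strongly'' at the purely periodic point $\bfv$, is not a finite routine verification: at a periodic point strong convergence is \emph{equivalent} to $|\lambda_2(B)|<1$, so asserting it is just restating what has to be shown, and it does not follow from the almost-everywhere strong convergence of the Brun algorithm (periodic points form a null set). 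Nor can it be settled by checking ``the classical inequalities characterising a Pisot dominant root'' on $x^3-(\operatorname{tr}B)x^2+sx\mp 1$ once and for all, because there are infinitely many admissible products and no uniform bound on $\operatorname{tr}B$ and $s$; some genuinely new idea (a cone/norm contraction argument on a suitable exterior power, as in the cited work) is needed here.

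Two smaller points. First, ``a routine inspection shows that the presence of a $3$ makes $B$ primitive'' is plausible but unargued; you should either verify it (e.g.\ by noting that any admissible product maps, letterwise, every index into every other after conjugating enough occurrences of $\sBrun_3$ through the $\sBrun_1,\sBrun_2$ blocks) or avoid it altogether — for your irreducibility argument you only need $\beta\neq\pm1$, which already follows from $\langle\mathbf q,\bfv\rangle\neq 0$ type considerations or from the fact that a nonnegative integer unimodular matrix with spectral radius $1$ and a positive eigenvector would force all eigenvalues on the unit circle, contradicting total irrationality as above. Second, when you apply Proposition~\ref{prop:Brun_conv} you should say explicitly that the normalized vector $\bfv^{(n)}$ has the same (purely periodic) expansion as $\bfv$, so uniqueness gives $\bfv^{(n)}=c\bfv$ and hence $B\bfv=c^{-1}\bfv$; as written the eigenvector claim is slightly elliptic, though correct.
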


Proposition~\ref{prop:Brun_exp} above leads us to consider only the Brun expansions $(i_n)_n \in \{1,2,3\}^{\bbN}$
in which $i_n = 3$ infinitely often.
Such sequences are called \tdef{Brun-admissible}.

\begin{rema}
The definition of Brun substitutions,
and more generally of substitutions associated with a continued fraction algorithm is not canonical,
since the matrices associated with the continued fraction algorithm do not impose a specific ordering
of the letters in the substitutions.
This specific ordering of letters is the one that yields the simplest technical proofs in Section~\ref{sect:tech_proofs},
but, in the case of Brun substitutions, the same results can be obtained for any other ordering using the same techniques.
\end{rema}

\subsection{Jacobi-Perron substitutions}
\label{subsec:JP}

Let $\bfv \in \bbR_{\geq 0}^3$ be such that $\bfv_1 \leq \bfv_3$ and $\bfv_2 \leq \bfv_3$.
The \tdef{Jacobi-Perron algorithm}~\cite{Jac68, Per07} consists in iterating the map
\[
\bfv \ \mapsto \ (\bfv_2 - a \bfv_1, \ \bfv_3 - b \bfv_1, \ \bfv_1), \
\text{ where } a = \lfloor \bfv_2/\bfv_1 \rfloor, \
\text{ and } b = \lfloor \bfv_3/\bfv_1 \rfloor.
\]
Like with the Brun algorithm, we obtain an infinite sequence of vectors $\bfv^{(0)} = \bfv, \bfv^{(1)}, \bfv^{(2)}, \ldots$
such that $\smash{\bfv^{(n)} = \bfM_{a_n, b_n}^{-1}\bfv^{(n-1)}}$
where $\bfM_{a, b}$ is a matrix with non-negative integer entries.
%\[
%\bfM_{a, b} =
%\begin{pmatrix}
%-a & 1 & 0 \\
%-b & 0 & 1 \\
% 1 & 0 & 0 \\
%\end{pmatrix}
%\]
%for non-negative integers $a$ and $b$
The \tdef{Jacobi-Perron expansion}
of $\bfv$ is the infinite sequence $(a_n, b_n)_{n\geq 1}$.
It can be proved that $(a_n, b_n)_{n\geq 1}$
is the Jacobi-Perron expansion of some vector $\bfv$ if and only if
for every $n \geq 1$, we have $0 \leq a_n \leq b_n$, $b_n \neq 0$,
and $a_n = b_n$ implies $a_{n+1} \neq 0$ (see~\cite{Sch73,Sch00,Bre81}).

A sequence $(a_n, b_n)_{n\geq 1}$
is \tdef{Jacobi-Perron-admissible}
if, for every $n \geq 1$, we have $0 \leq a_n \leq b_n$, $b_n \neq 0$,
and $a_n = b_n$ implies $a_{n+1} \neq 0$.
Here again, we can state a convenient characterization of expansions of totally irrational vectors.

\begin{prop}[\cite{Per07}]
\label{prop:JP_conv}
A sequence $(a_n, b_n)_{n\geq 1}$ is a Jacobi-Perron-admissible sequence
if and only if it is the Jacobi-Perron expansion of totally irrational vector $\bfv \in \bbR_{\geq 0}^3$.
Moreover, for every such expansion $(a_n,b_n)_{n \geq 1}$,
there is a unique vector $\bfv$ whose expansion is $(a_n,b_n)_{n \geq 1}$.
\end{prop}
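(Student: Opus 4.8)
This is a classical theorem of Perron; the only genuinely new content beyond the equivalence recalled just above (admissibility $\Leftrightarrow$ being the Jacobi--Perron expansion of \emph{some} vector) is that the vector can be taken totally irrational and that it is unique. I would nonetheless organize a self-contained proof in three steps, the last of which is the real obstacle.

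\textbf{Step 1: a totally irrational vector has an admissible expansion.} Here ``totally irrational'' means $\bfv_1,\bfv_2,\bfv_3$ linearly independent over $\bbQ$. Since the transition matrices $\bfM_{a_n,b_n}$ lie in $\mathrm{GL}_3(\bbZ)$ and $\bfv^{(n-1)}=\bfM_{a_n,b_n}\bfv^{(n)}$, every iterate $\bfv^{(n)}$ again has $\bbQ$-linearly independent, hence nonzero, coordinates, so $\bfv_1^{(n)}\neq0$ and the algorithm never stops. The map $\bfv\mapsto(\bfv_2-\lfloor\bfv_2/\bfv_1\rfloor\bfv_1,\ \bfv_3-\lfloor\bfv_3/\bfv_1\rfloor\bfv_1,\ \bfv_1)$ also sends $\{0<\bfv_1\leq\bfv_3,\ \bfv_2\leq\bfv_3\}$ into itself, since its last coordinate is $\bfv_1$ and its first two lie strictly between $0$ and $\bfv_1$. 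Admissibility is then read off directly: $0\leq a_n\leq b_n$ comes from $\bfv_2^{(n-1)}\leq\bfv_3^{(n-1)}$; $b_n\geq1$ from $\bfv_1^{(n-1)}<\bfv_3^{(n-1)}$; and if $a_n=b_n$ then $0<\bfv_1^{(n)}<\bfv_2^{(n)}$ (using $\bfv_2^{(n-1)}<\bfv_3^{(n-1)}$), so $a_{n+1}=\lfloor\bfv_2^{(n)}/\bfv_1^{(n)}\rfloor\geq1$.

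\textbf{Step 2: an admissible sequence is realized by some vector.} Working projectively on the closed domain $D=\{\bfv\in\bbR_{\geq0}^3:\bfv_1\leq\bfv_3,\ \bfv_2\leq\bfv_3\}$, I would study the cylinders $Z_{a,b}=\{\bfv\in D:\lfloor\bfv_2/\bfv_1\rfloor=a,\ \lfloor\bfv_3/\bfv_1\rfloor=b\}$. A short computation shows that $\bfv\mapsto\bfM_{a,b}\bfv$ maps $D$ bijectively onto $Z_{a,b}$ when $a<b$, and maps $D\cap\{\bfv_1\leq\bfv_2\}$ bijectively onto $Z_{a,b}$ when $a=b$, its inverse being the algorithm map. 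Hence the set of directions whose first $n$ Jacobi--Perron digits are $(a_1,b_1),\dots,(a_n,b_n)$ is the image under $\bfM_{a_1,b_1}\cdots\bfM_{a_n,b_n}$ of $D$ (or of $D\cap\{\bfv_1\leq\bfv_2\}$ if $a_n=b_n$); this is a nested family of nonempty compact sets precisely when the prefix is admissible --- $a_k\leq b_k$ and $b_k\neq0$ are exactly what make $Z_{a_k,b_k}$ a nonempty subset of $D$, and $a_k=b_k\Rightarrow a_{k+1}\neq0$ is exactly what keeps the next cylinder inside the subregion $\{\bfv_1\leq\bfv_2\}$ to which a step with $a_k=b_k$ confines us. So the intersection over $n$ is nonempty, and any direction in it has expansion $(a_n,b_n)_{n\geq1}$. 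That this direction is totally irrational is again classical: a non-totally-irrational vector of $\bbR_{>0}^3$ has a finite expansion (rational ones terminate as in Euclid's algorithm; for a vector in a rational plane the defining integer relation persists under the unimodular transition matrices and becomes incompatible with strict positivity of the iterates after finitely many steps), which contradicts infiniteness of $(a_n,b_n)_{n\geq1}$.

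\textbf{Step 3: uniqueness --- the main obstacle.} What remains is that the nested cones $C_n=\bfM_{a_1,b_1}\cdots\bfM_{a_n,b_n}\,\bbR_{\geq0}^3$ meet in a single ray; equivalently, that the Jacobi--Perron algorithm is weakly convergent along every admissible sequence. Writing the columns of $A^{(n)}=\bfM_{a_1,b_1}\cdots\bfM_{a_n,b_n}$ as $D_{n-2},D_{n-1},D_n$, one has $D_n=D_{n-3}+a_nD_{n-2}+b_nD_{n-1}$; since $\det A^{(n)}=1$ and $b_n\geq1$, the triangles $T_n$ obtained by projecting $D_{n-2},D_{n-1},D_n$ onto the unit simplex are nested and satisfy $\mathrm{area}(T_{n+1})=\bigl(\|D_{n-2}\|/\|D_{n+1}\|\bigr)\,\mathrm{area}(T_n)\leq\tfrac12\,\mathrm{area}(T_n)$, so their areas decay geometrically. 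The hard part --- and the main obstacle of the whole proposition --- is to promote ``area $\to0$'' to ``diameter $\to0$'', ruling out degeneration to a segment: this is where admissibility is really used, through the elementary fact that any four consecutive factors $\bfM_{a_k,b_k}\cdots\bfM_{a_{k+3},b_{k+3}}$ multiply to a strictly positive matrix, so the cones $C_n$ stay uniformly transverse to the boundary of the positive octant and one concludes by a contraction estimate in the Hilbert projective metric (Perron's theorem), the delicate point being to keep the contraction effective even when the partial quotients $b_n$ are unbounded. Once $\bigcap_n C_n$ is a single ray, that ray is the unique vector with the prescribed expansion, and by Step 2 it is totally irrational; combined with Step 1 this gives both the equivalence and the uniqueness.
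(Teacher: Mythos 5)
The paper itself offers no proof of Proposition~\ref{prop:JP_conv}: it is quoted as a classical result of Perron (the citation [Per07]), so there is no argument of the authors to compare yours against, and your proposal has to stand on its own as a reconstruction of the classical proof. Your Steps~1 and~2 do follow the standard line (admissibility of the expansion of a totally irrational vector; realization of an admissible sequence via nested cylinder sets) and are essentially sound.

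The genuine gap is Step~3, which you yourself identify as the real obstacle and then do not close. The area estimate $\mathrm{area}(T_{n+1})\leq\frac12\,\mathrm{area}(T_n)$ is asserted rather than derived, and in any case area decay is the cheap part; the content of Perron's theorem is precisely the diameter decay (weak convergence), and your route to it --- positivity of every block of four consecutive matrices plus a Birkhoff/Hilbert-metric contraction --- does not go through as stated. The contraction ratio of a positive matrix in Hilbert's projective metric depends on the ratios of its entries, and along admissible sequences the digits $b_n$ are unbounded, so the blockwise contraction coefficients can approach $1$ and no uniform contraction is available; you flag exactly this difficulty and leave it unresolved, but resolving it \emph{is} the theorem (the classical proofs instead estimate the convergents directly, e.g.\ via the recursion $q^{(n)}=q^{(n-3)}+a_nq^{(n-2)}+b_nq^{(n-1)}$ with $b_n\geq 1$, and the uniform positivity of bounded blocks would itself need verification). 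A second, smaller gap sits inside Step~2: the claim that a non-totally-irrational vector has a finite expansion is needed for the ``only if'' direction, and your justification (``the integer relation persists and becomes incompatible with strict positivity'') is not an argument: the relation covector $c$ evolves by the transposed nonnegative matrices $\transp\bfM_{a,b}$ but in general keeps mixed signs, so positivity of the iterates $\bfv^{(n)}$ yields no contradiction by itself; the classical treatment of this point again passes through the convergence theory or a separate descent. So the architecture is the right one, but the crux of the proposition --- uniqueness, i.e.\ weak convergence of the Jacobi--Perron algorithm --- remains unproven in your write-up.
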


Let the
\tdef{Jacobi-Perron substitutions}
be defined by $\sJP_{a,b} : 1 \mapsto 3, 2 \mapsto 13^a, 3 \mapsto 23^b$ for all $a, b \geq 0$.
Their associated duals are defined by $\SJP_{a,b} = \EOS(\sJP_{a,b})$ and are given by
\[
\SJP_{a,b} :
\left\{
\begin{array}{rcl}
\left[\mathbf 0,1\right]^\star & \mapsto & [a\bfe_1,2]^\star \\
\left[\mathbf 0,2\right]^\star & \mapsto & [b\bfe_1,3]^\star \\
\left[\mathbf 0,3\right]^\star & \mapsto & [\mathbf 0,1]^\star
    \ \cup \ \bigcup_{k=0}^{a-1} [k\bfe_1,2]^\star
    \ \cup \ \bigcup_{k=0}^{b-1} [k\bfe_1,3]^\star.
\end{array}
\right.
\]
These substitutions are chosen in such a way that
$\smash{\bfM_{a,b} = \transp\bfM_{\sJP_{a,b}}}$ for every $a,b \geq 0$,
so the correspondence with dual substitutions and discrete planes
is analogous to the one described above for the Brun substitutions.
We also have:

\begin{prop}[\cite{DFP04}]
\label{prop:JP_exp}
Every product $\sJP_{a_1,b_1} \cdots \sJP_{a_n,b_n}$ is irreducible Pisot
if $0 \leq a_n \leq b_n$ and $b_n \neq 0$ for all $n \geq 1$.
\end{prop}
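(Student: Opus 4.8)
The plan is to reduce the statement to a purely linear-algebraic fact about the product of the incidence matrices $\bfM_{\sJP_{a_k,b_k}}$, and then to verify that fact by exhibiting the characteristic polynomial explicitly (or, at least, by controlling its root structure). Write $M_k = \bfM_{\sJP_{a_k,b_k}}$; from the definition $\sJP_{a,b} : 1\mapsto 3,\ 2\mapsto 13^a,\ 3\mapsto 23^b$ we read off
\[
M_k \ = \
\begin{pmatrix}
0 & 1 & 0 \\
0 & 0 & 1 \\
1 & a_k & b_k
\end{pmatrix},
\]
which has determinant $\pm 1$, so the product $N = M_1 \cdots M_n$ is unimodular. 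First I would observe that each $M_k$ is non-negative and that, under the admissibility-type hypothesis $0\le a_n\le b_n$ with $b_n\ne 0$ in the \emph{last} index, the product $N$ is a primitive non-negative matrix: a short computation shows $M_k$ has a positive power precisely because the last row contains the entry $1$ in position $1$ together with a nonzero entry coming from $b_k$, so the associated digraph is strongly connected and aperiodic; primitivity of the product then follows from primitivity of the last factor together with the fact that all other factors are non-negative with positive spectral radius. By Perron--Frobenius, $N$ then has a simple real dominant eigenvalue $\beta > 1$ (it is $>1$ since $\det N = \pm 1$ and $N \ne \mathrm{Id}$ is expanding in the Perron direction), and $\beta$ is an algebraic integer of degree dividing $3$.

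The core of the argument is then to show that $\beta$ is actually of degree exactly $3$ and that its two Galois conjugates have modulus $< 1$, i.e.\ that the characteristic polynomial $\chi_N(x) = x^3 - t x^2 + s x \mp 1$ (with $t = \operatorname{tr} N \ge 1$, $s$ the sum of $2\times 2$ principal minors) is irreducible over $\bbQ$ and Pisot. Irreducibility: since $\beta$ is a simple eigenvalue $>1$ of a primitive matrix, the only way $\chi_N$ could fail to be irreducible is to have a rational — hence, being monic with constant term $\pm 1$, a root equal to $\pm 1$; I would rule this out by checking $\chi_N(1) \ne 0$ and $\chi_N(-1)\ne 0$ directly from the sign pattern of $t,s$ forced by the $a_k,b_k \ge 0$, $a_n \le b_n$, $b_n \ne 0$ hypotheses (this is exactly the place where the hypothesis is used, and it is essentially the computation already carried out in~\cite{DFP04}). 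The Pisot property then follows from a standard lemma: a monic irreducible cubic $x^3 - tx^2 + sx + u$ with $|u| = 1$ having one real root $\beta > 1$ and being the characteristic polynomial of a primitive integer matrix must have its other two roots inside the unit disk, because their product has modulus $1/\beta < 1$ and — were one of them of modulus $\ge 1$ — one checks using $\operatorname{tr}(N^m) = \beta^m + \gamma^m + \bar\gamma^m \in \bbZ$ and the non-negativity/primitivity of $N$ (so $\operatorname{tr}(N^m) \sim \beta^m$) that the remaining conjugates cannot contribute a comparable growth term; alternatively one invokes directly the cubic case of the classification of which integer companion-type matrices are Pisot.

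The step I expect to be the main obstacle is the primitivity of the product $N = M_1\cdots M_n$ under a hypothesis that constrains only the index $n$: one must be careful that the intermediate factors $M_1,\dots,M_{n-1}$ (for which $a_k = b_k = 0$ is allowed, giving the permutation-like matrix $1\mapsto 3, 2\mapsto 1, 3\mapsto 2$ which is \emph{not} primitive on its own) do not destroy strong connectivity of the composed digraph. The resolution is that the digraph of a product of non-negative matrices contains, between layers, every edge of each factor, and the last factor $M_n$ already supplies — via $b_n \ne 0$ — a self-returning structure that, threaded through the earlier (possibly merely cyclic) layers, yields strong connectivity and aperiodicity of the whole; making this airtight is the one genuinely combinatorial point, and I would carry it out by an explicit description of the layered digraph rather than by a slick argument. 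Once primitivity is in hand, everything else is the cubic computation above, and I would simply cite~\cite{DFP04} for the determinant/trace identities if a reproof is not wanted.
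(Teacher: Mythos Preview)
The paper does not give a proof of this proposition; it is simply cited from~\cite{DFP04}. So there is no detailed argument to compare against, and your task was really to reconstruct that cited result.

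Two genuine problems with your proposal. First, you misread the hypothesis: the condition $0\le a_k\le b_k$, $b_k\neq 0$ is meant to hold for \emph{every} index $k\in\{1,\dots,n\}$ (the paper's notation is admittedly sloppy, reusing $n$ as a running index, but compare the definition of Jacobi--Perron admissibility just above). In particular, no factor has $a_k=b_k=0$, so your entire ``main obstacle'' paragraph about threading primitivity through possibly-permutation intermediate factors is based on a false premise. Second, and more seriously, your Pisot argument has a real gap. Primitivity gives you a simple Perron root $\beta>1$ and forces the other eigenvalues to satisfy $|\gamma|,|\delta|<\beta$, but it does \emph{not} force them inside the unit disk. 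The product relation $|\gamma\delta|=1/\beta$ settles the complex-conjugate case (then $|\gamma|=|\delta|=\beta^{-1/2}<1$), but in the totally real case nothing you wrote rules out $1<\gamma<\beta$ with $|\delta|<1/\beta$. Your trace asymptotic $\operatorname{tr}(N^m)\sim\beta^m$ is perfectly compatible with such a $\gamma$, and there is no ``standard lemma'' of the form you state: unimodular primitive $3\times 3$ integer matrices with an irreducible characteristic polynomial are not automatically Pisot. The actual argument in~\cite{DFP04} uses the specific arithmetic of the $a_k,b_k$ to locate all three roots, and that is precisely the content you have not supplied.
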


Note that, unlike Brun substitutions,
Jacobi-Perron substitutions do not consist of a finite number of substitutions,
but of infinitely many substitutions parametrized by $a,b \in \bbN$.
In essence this reflects the fact that the associated continued fraction algorithm
is \emph{multiplicative}, in opposition with the \emph{additive} version of
the Brun algorithm given in Section~\ref{subsec:Brun}.
Such notions are defined formally in~\cite{Sch00}.

\paragraph{Additive version of Jacobi-Perron substitutions}
When dealing with Jacobi-Perron substitutions in Section~\ref{sect:gen},
we will work directly on the substitutions $\SJP_{a,b}$,
except in Section~\ref{subsec:graph_JP},
where the generation graphs that we will construct apply only to products
of \emph{finitely many} substitutions
(but there are infinitely many substitutions $\sJP_{a,b}$).
Hence we will need to decompose $\sJP_{a,b}$ in products using finitely many different substitutions.
A first natural additive substitutive realization of Jacobi-Perron algorithm is given by
$\tau_1 : 1 \mapsto 1, 2 \mapsto 21, 3 \mapsto 3$,
$\tau_2 : 1 \mapsto 1, 2 \mapsto 2, 3 \mapsto 31$,
$\tau_3 : 1 \mapsto 3, 2 \mapsto 1, 3 \mapsto 2$,
with $\sJP_{a,b}= \tau_3 \tau_1^a \tau_2 ^b$.
(Note that $\tau_1\tau_2 = \tau_2 \tau_1$.)
The problem with this decomposition is that it is not restrictive enough,
which makes the generation graphs associated with this family too difficult to handle.

So we choose another additive decomposition,
which enforces the constraint $0 \leq a \leq b$ and $ b \geq 1$
to be satisfied in the additive products.
By setting
$\theta_1 = \tau_2$,
$\theta_2 = \tau_1 \tau_2$,
$\theta_3 = \tau_3 \tau_2$ and
$\theta_4 = \tau_3 \tau_1 \tau_2$,
%that is,
%\[
%\theta_1 :
%\begin{cases}
%1 \mapsto 1 \\ 2 \mapsto 2 \\3 \mapsto 31
%\end{cases}
%\qquad
%\theta_2 :
%\begin{cases}
%1 \mapsto 1 \\ 2 \mapsto 21 \\3 \mapsto 31
%\end{cases}
%\qquad
%\theta_3 :
%\begin{cases}
%1 \mapsto 3 \\ 2 \mapsto 1 \\3 \mapsto 23
%\end{cases}
%\qquad
%\theta_4 :
%\begin{cases}
%1 \mapsto 3 \\ 2 \mapsto 13 \\3 \mapsto 23
%\end{cases}
%\]
we have
\[
\sJP_{a,b} =
\begin{cases}
\theta_3\theta_1^{b-1}
    & \text{ if } a = 0 \\
\theta_3 \theta_1^{b-a-1} \theta_2^{a} \ = \  \theta_4 \theta_1^{b-a} \theta_2^{a-1}
    & \text{ if } 0 < a < b \\
\theta_4 \theta_2^{a-1}
    & \text{ if } a = b.
\end{cases}
\]
Finally, we define $\Theta_i = \EOS(\theta_i)$ for $i \in \{1,2,3,4\}$.
These are the substitutions that will be used in Section~\ref{subsec:generation_graphs}
to construct generation graphs for the Jacobi-Perron substitutions.

\begin{rema} \label{rema:JPAdditive}
Note that the ``rhythm'' is provided by $\theta_3$ and $\theta_4$,
in the sense that every product $\theta_{i_1} \cdots \theta_{i_n}$ starting with $\theta_3$ or $\theta_4$
can be uniquely decomposed into a product of $\sJP_{a_k,b_k}$.
Consequently, if an infinite sequence $(i_n)_{n \geq 1} \in \{1,2,3,4\}^\bbN $
is the additive expansion of an admissible Jacobi-Perron expansion,
then it must contain infinitely many occurrences of $3$ and $4$.
\end{rema}

\subsection{Substitutive dynamics and symbolic codings}\label{subsec:dynamics}

\paragraph{Pure discrete spectrum}
We endow $\mcA^\bbZ$ with the product of discrete topologies.
Let $S$ stand for the (two-sided) shift map over $\mcA^\bbZ$,
that is, $S((u_n)_{n \in \bbZ}) = (u_{n+1})_{n \in \bbZ}$.
Let $\sigma$ be a primitive substitution over $\mcA$
and let $u \in \mcA^\bbZ$ be such that $\sigma^{k} (u)=u$ for some $k \geq 1$
and such that $u_{-1}u_0$ is a factor of some $\sigma^\ell(i)$, $i \in \mcA$
(such a word $u$ exists by primitivity of $\sigma$).
Let $\overline{\mcO(u)}$ be the orbit closure of the two-sided word $u$ under the action of the shift $S$.
The substitutive symbolic dynamical system $(X_\sigma, S)$
generated by $\sigma$ is defined as $X_\sigma = \overline{\mcO(u)}$.
One easily checks by primitivity that
$(X_\sigma, S)$ does not depend on the choice of the two-sided word $u$ fixed by some power of $\sigma$.
For more details, see~\cite{Que10,Fog02}.

Let $\mu$ be a shift-invariant Borel probability measure for $(X_\sigma,S)$.
An eigenfunction for $(X_\sigma, S,\mu)$ is an $L^2(X_\sigma,\mu)$ function $f$
for which there is an associated eigenvalue $\alpha\in\bbC$ (with modulus $1$) such that $f \circ S=\alpha f$.
The symbolic dynamical system $(X_\sigma, S)$ is said to have \tdef{pure discrete spectrum}
if the linear span of the eigenfunctions is dense in $L^2(X_\sigma,\mu)$.
Note that measure-theoretic discrete spectrum and topological
discrete spectrum are proved to be equivalent for primitive substitutive dynamical systems~\cite{hos2}.

\paragraph{Rauzy fractals}
Let $\sigma$ be a unimodular irreducible Pisot substitution with incidence matrix $\Ms$.
We denote by $\pic$ the projection onto the contracting eigenhyperplane of $\Ms$ along the expanding eigendirection of $\Ms$.
The \tdef{Rauzy fractal} $\mcT_\sigma$ associated with $\sigma$ is the Hausdorff limit
of the sequence $(\mcD_n)_{n \geq 0}$ with $\mcD_n = \Ms^n \circ \pic \circ \EOSS^n(\mcU)$ for all $n$,
with $\mcU = [\mathbf 0,1]^\star \cup [\mathbf 0,2]^\star \cup [\mathbf 0,3]^\star = \input{fig/U.tex}$.
It is divided into the \tdef{subtiles} $\mcT_\sigma(i)$,
for $i \in \mcA$, provided by the Hausdorff limit of the sequence of compact sets
$\mcD_n(i) = \Ms^n \circ \pic \circ \EOSS^n([\mathbf 0,i]^\star)$ for $n \geq 0$.
Examples of Rauzy fractals are given in Figure~\ref{fig:rfbrun_zero}~p.~\pageref{fig:rfbrun_zero}.

A Rauzy fractal provides a geometrical realization for the substitutive dynamics
of a unimodular irreducible Pisot substitution $\sigma$.
First, a domain exchange transformation $(\mcT_\sigma,E)$ can be defined acting on the three subtiles of the Rauzy fractal;
it is defined by translating each subtile $\mcT_\sigma(i)$ by the vector $\pic (\bfe_i)$.
A combinatorial condition called the strong coincidence condition
guarantees that the subtiles $\mcT_\sigma(i)$ are disjoint in measure~\cite{AI01}.
The domain exchange transformation $(\mcT_\sigma,E)$ is proved to
be measure-theoretically conjugate to $(X_{\sigma}, S)$ if $\sigma$ satisfies the strong coincidence condition.
Furthermore this domain exchange transformation is conjectured to factor onto a toral translation,
according to the Pisot conjecture.
The underlying lattice for the factorization is given by the vectors $\pic(\bfe_i - \bfe_j)$ for $i\neq j$.
This can be reformulated in terms of tilings:
$(\mcT_\sigma,E)$ is measure-theoretically conjugate to a translation (via this factorization)
if and only if the translates by the vectors of the lattice~$\pic (\bbZ^2)$ of the subtiles of the Rauzy fractal
tile the contracting eigenhyperplane of $\Ms$. For more details, see~\cite{AI01,CS01}.
Second, when pure discrete spectrum holds, the Rauzy fractal and its subtiles
can be used to construct a Markov partition of the toral automorphism provided by
the incidence matrix $\Ms$ of~$\sigma$ (see~\cite{IO93,KV98,Pra99,IR06,Sie00}).
This will de developed in Section~\ref{subsec:applis_dyn}.

\paragraph{Symbolic natural codings and bounded remained sets}
Given an invertible transformation $T$ on a set $X$ and a partition $\mcP = \{P_0, \ldots, P_{k-1}\}$ of $X$,
a \tdef{coding} of $x \in X$ is a sequence $u \in \{0,\cdots, k-1\}^\bbZ$ such that $u_n=i$
if $T^nx \in P_i$, for all $n\in \bbZ$.
A symbolic dynamical system $(\Omega, S)$ is a \tdef{symbolic coding} of $(X,T)$ if there exists a
finite partition of $X$ such that every element of $\Omega$ is a coding of the orbit of some point of $X$,
and if, furthermore, $(\Omega,S)$ and $(X,T)$ are measure-theoretically conjugate.
A \tdef{toral translation} on $\bbT ^2$
is a map $R_\alpha: \bbT^2 /L \rightarrow \bbR ^2 / L$, $x\mapsto x + \alpha \pmod L$,
where $\alpha \in \bbR ^2$, and where $L$ is a full rank lattice in $\bbR ^2$.
A symbolic dynamical system $(\Omega,S)$ is a \tdef{symbolic natural coding} of $(\bbT^2 / L, R_\alpha)$
if it is a symbolic coding defined with respect to a fundamental domain for the lattice $L$ in $\bbR^2$
together with a finite partition of this fundamental domain such that
the restriction of the map $R_\alpha$ is provided by the translation by some vector on each element of the partition.
Lastly, a set $ Y \subset X$ is called a \tdef{bounded remainder set} for the minimal dynamical system $(X,T,\mu)$
if its indicator function yields bounded Birkhoff sums, that is,
if there exists $C >0$ such that for a.e. $x \in X$, we have
$
|\#\{n \in \bbZ : \, |n| \leq N, \ T^n (x) \in Y \} - (2N+1) \mu (Y) | \leq C.
$
In case of pure discrete spectrum of $(X_{\sigma}, S)$, the Rauzy subtiles $\mcT(i)$ provide bounded remainder sets
for the associated toral translation (for more details, see the proof of Corollary~\ref{coro:dynprod}).

\section{Generating discrete planes with substitutions}
\label{sect:gen}

In this section we establish a generic strategy to generate discrete planes with substitutions
associated with multidimensional continued fraction algorithms,
in the sense that this strategy can be applied to
other product families of substitutions and continued fraction algorithms.
We have in mind fibred algorithms in the sense of~\cite{Schweiger95},
which are Markovian (``without memory'') and which are defined by piecewise fractional maps
such as considered in~\cite{Bre81,Sch00}.
The underlying matrices are thus nonnegative unimodular matrices, so we can use the formalism of dual substitutions.

A detailed outline of the strategy is first given in Section~\ref{subsec:mcfgen_strategy}.
In Section~\ref{subsec:coverings} we introduce \emph{strong coverings},
which serve as a combinatorial restriction on the patches of discrete planes, and
which are crucial in order to prove the results of Section~\ref{subsec:annulus_property},
where we give sufficient combinatorial criteria to establish the \emph{annulus property}.
In Section~\ref{subsec:generation_graphs}
we construct \emph{generation graphs} to characterize the sequences of substitutions
that fail to generate an entire discrete plane when starting from a finite pattern.

\subsection{Outline of the strategy}
\label{subsec:mcfgen_strategy}

We recall that a \tdef{pattern} is a finite union of faces
and that a \tdef{seed} is a pattern $V$
such that iterating any sequence of respectively Brun or Jacobi-Perron dual substitutions from $V$
yields patterns with arbitrarily large minimal combinatorial radii centered at the origin.
We will provide adequate tools in order to prove
Theorem~\ref{theo:seeds} (existence of seeds),
Theorems~\ref{theo:bad_Brun} and~\ref{theo:bad_JP} (characterization of sequences of which $U$ is not a seed),
and Theorem~\ref{theo:balls} (iterating from $\mcU$ generates translates of arbitrarily large disks).

This is illustrated by the following example for Brun substitutions.
Let $(i_n) = 232~232~\ldots$
and $(j_n) = 2311~2311~\ldots$
be two infinite (periodic) sequences.
Figure~\ref{fig:gen_goodbad} (left)
suggests that $\smash{\SBrun_{i_1} \cdots \SBrun_{i_n}(\mcU)}$
covers arbitrarily large disks centered at the origin as $n \rightarrow \infty$,
but Figure~\ref{fig:gen_goodbad} (right)
suggests that $\smash{\SBrun_{j_1} \cdots \SBrun_{j_n}(\mcU)}$
does \textbf{not} cover arbitrarily large disks centered at the origin as $n \rightarrow \infty$.
This can be proved rigorously thanks to Theorem~\ref{theo:bad_JP},
and Theorem~\ref{theo:balls} guarantees that \emph{translates} of arbitrarily large disks do occur.

\begin{figure}[ht]
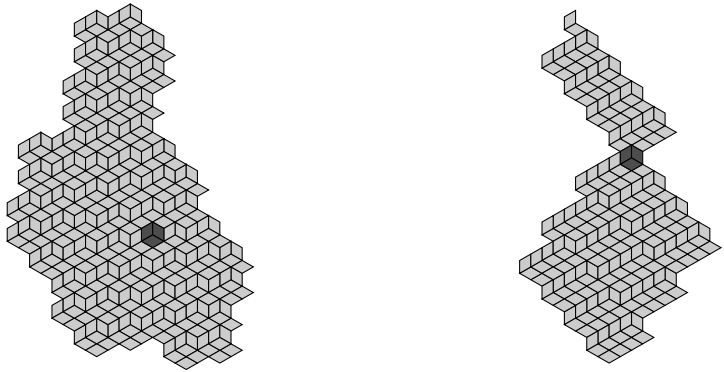

\centering
\myvcenter{\input{fig/iter_brun_232.tex}}
\hfil
\myvcenter{\input{fig/iter_brun_2311.tex}}
\caption{%
    On the left, the pattern $(\SBrun_2 \SBrun_3 \SBrun_2)^4(\mcU)$.
    On the right, the pattern $(\SBrun_2 \SBrun_3 \SBrun_1 \SBrun_1)^3(\mcU)$.
    The pattern $\mcU$ at the origin is shown in dark gray.
    The Rauzy fractals associated with these two products of substitutions
    are plotted in Figure~\ref{fig:rfbrun_zero}~p.~\pageref{fig:rfbrun_zero}.
    We will see that in both cases, the arbitrarily large disks covering property is satisfied.
    }
\label{fig:gen_goodbad}
\end{figure}

\paragraph{Annuli and covering properties}

The patterns generated by iterating dual substitutions can have complicated shapes,
so there is no obvious way of proving that such a sequence covers arbitrarily large disks.
One approach to prove this property in the context of dual substitutions
has been initiated by Ito and Ohtsuki~\cite{IO94}.
The idea is to make sure that the generated patterns
contain an increasing number of nested concentric annuli of positive width,
and hence cover arbitrarily large disks.
This requires that the \emph{annulus property} holds,
\emph{i.e.}, that the image of an annulus by a dual substitution remains an annulus.

The action of a dual substitution is governed mainly by the action of the inverse of the incidence matrix
of the substitution; roughly speaking, a dual substitution can be seen as a discrete analogue of an affine transformation.
Affine transformations preserve the topological property of being an annulus.
However the discretization step makes things more complicated,
and there exist annuli not preserved under the action of a dual substitution:
the annulus property is wrong if no additional assumptions are made on the annuli,
as shown in Example~\ref{exam:badannulus}.
%(It can even happen that the image of a connected pattern by a dual substitution is disconnected.)

One way around this is the notion of $\mcL$-covering (introduced in~\cite{IO94}),
which intuitively means that a pattern is path-connected by paths of contiguous connected patterns (the elements of $\mcL$),
which provides some combinatorial restrictions on the annuli.
Contrary to as stated in~\cite{IO94}, $\mcL$-coverings alone are not sufficient
to ensure that the image of an annulus remains an annulus.
We thus have extended this notion by considering patterns of more than two faces
to be able to find suitable sets $\mcL$ for the substitutions we study.
(In particular, the set of $7$ patterns given in~\cite{IO94} for Jacobi-Perron substitutions is not sufficient.)

We introduce and discuss covering properties in Section~\ref{subsec:coverings},
as well as the stronger condition (strong $\mcL$-covering) under which the desired annulus property holds.
These restrictions allow us to prove the annulus property in Section~\ref{subsec:annulus_property}.
The proofs require many verifications which are specific to the set of substitutions under study.
Hence the results given in Sections~\ref{subsec:coverings} and~\ref{subsec:annulus_property}
are stated in a generic way (they do not depend on the choice of substitutions),
and proving that they hold for our families of substitutions is reduced to technical verifications,
which are carried out in Section~\ref{sect:tech_proofs}.

\paragraph{Generation graphs}
The annulus property alone is not sufficient:
we must also make sure that, starting from an initial pattern expected to be a candidate seed,
at least one annulus is eventually generated by iterating sufficiently many substitutions.

A first natural strategy to check this is to track all the possible images of the initial pattern $\mcU$
under iteration of admissible products,
and prove that an annulus centered at the origin is always eventually generated around $\mcU$.
This is the strategy which was originally taken in~\cite{IO94} with the Jacobi-Perron substitutions,
by constructing graphs whose vertices are patterns.
The problem with this approach is that the obtained graphs are too large and difficult to analyze,
and require the proof of numerous lemmas.
In some cases, like with the Brun substitutions,
this approach is impracticable;
this is mostly due to the fact that
for some admissible sequences of Brun (or Jacobi-Perron),
the pattern pattern $\mcU$ is not sufficient to generate an entire discrete plane,
as illustrated in Figure~\ref{fig:gen_goodbad} (right).

Hence we introduce in Section~\ref{subsec:generation_graphs} a new tool, \emph{generation graphs},
which are much more manageable (the vertices are single faces and not patterns),
which can be constructed algorithmically,
and which contain the information needed about the annulus generation properties around seeds.
This prevents us from having to deal with numerous cases by hand
(in contrast with the graphs described in~\cite{IO94} for example).
The aim of generation graphs is to track
all the possible sequences of preimages of each \emph{face} of the annuli that we want to generate,
instead of tracking all the possible patterns around a given initial pattern.
The construction given in Section~\ref{subsec:generation_graphs} is generic,
and specific generation graphs are computed
in Sections~\ref{subsec:graph_Brun} and~\ref{subsec:graph_JP}
for the Brun and Jacobi-Perron substitutions.

\begin{rema}
Given a \emph{single} unimodular irreducible Pisot substitution $\sigma$,
it is not difficult to see that there exists a finite seed $V$ such that iterating $\EOS(\sigma)$ from $V$
generates an entire discrete plane.
This follows from the fact that $\Ms$
is contracting on the contracting plane of $\Ms$
thanks to the Pisot condition,
so the discrete affine map $\EOS(\sigma)$ is also contracting on the associated (invariant) discrete plane
(for a proof, see~\cite{CANT}).
In the case of a product family of substitutions,
the existence of such finite seeds $V$ is not guaranteed,
especially if some of the substitutions are not Pisot
(for example $\sBrun_3$ is Pisot, but $\sBrun_1$ and $\sBrun_2$ are not).
Constructing generation graphs will allow us to prove the existence of seeds even in such cases.
\end{rema}

\subsection{Covering properties}
\label{subsec:coverings}

We now introduce $\mcL$-coverings and strong $\mcL$-coverings,
which are the combinatorial tools that will be used in order to prove
the annulus property in Section~\ref{subsec:annulus_property}.

A pattern is said to be \tdef{edge-connected}
if any two faces are connected by a path of faces $f_1, \ldots, f_n$
such that $f_k$ and $f_{k+1}$ share an edge, for all $k \in \{1, \ldots, n-1\}$.
In the definitions below, $\mcL$ will always denote a set of patterns
which is closed by translation of $\bbZ^3$,
so we will define such sets by giving only one element of each translation class.
Let $\mcL$ be a set of patterns.
A pattern $P$ is \tdef{$\mcL$-covered}
if for all faces $e, f \in P$,
there exist patterns $Q_1, \ldots, Q_n \in \mcL$ such that
\begin{enumerate}
  \item\label{item:cov1} $e \in Q_1$ and $f \in Q_n$;
  \item\label{item:cov2} $Q_k \cap Q_{k+1}$ contains at least one face,
    for all $k \in \{1, \ldots, n-1\}$;
  \item\label{item:cov3} $Q_k \subseteq P$ for all $k \in \{1, \ldots, n\}$.
\end{enumerate}

The next proposition, due to~\cite{IO94},
gives a sufficient combinatorial criterion to prove
that a dual substitution preserves $\mcL$-covering.

\begin{prop}
\label{prop:coverprop}
Let $\mcL$ be a set of patterns,
$P$ be an $\mcL$-covered pattern
and $\Sigma$ be a dual substitution.
If $\Sigma(Q)$ is $\mcL$-covered for every $Q \in \mcL$,
then $\Sigma(P)$ is $\mcL$-covered.
\end{prop}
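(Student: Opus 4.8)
The plan is to reduce the statement to two observations about how $\EOS$ interacts with the combinatorial data witnessing an $\mcL$-covering. Let $P$ be $\mcL$-covered and let $\Sigma = \EOS(\sigma)$ for some unimodular substitution $\sigma$. First I would recall from Proposition~\ref{prop:imgplane}\eqref{imgplanestatement2} the ``linearity'' of $\EOS$: for any union of faces $R$ and any $\bfx \in \bbZ^3$ we have $\Sigma(\bfx + R) = \Msinv \bfx + \Sigma(R)$, and $\Sigma$ distributes over unions. In particular, since $\mcL$ is closed under $\bbZ^3$-translation, if $Q \in \mcL$ then $\Sigma(Q)$ is $\mcL$-covered (by hypothesis), and therefore so is any translate $\bfx + \Sigma(Q) = \Sigma(\bfx + Q)$, because $\mcL$-coverability is itself translation-invariant. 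So the hypothesis ``$\Sigma(Q)$ is $\mcL$-covered for every $Q \in \mcL$'' really applies to every translate of every element of $\mcL$, i.e.\ to $\Sigma(Q')$ for \emph{every} $Q' \in \mcL$ in the full (translation-closed) sense.

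Next I would take two arbitrary faces $e', f' \in \Sigma(P)$ and produce an $\mcL$-path between them inside $\Sigma(P)$. Since $\Sigma(P) = \bigcup_{g \in P} \Sigma(g)$, there are faces $g_e, g_f \in P$ with $e' \in \Sigma(g_e)$ and $f' \in \Sigma(g_f)$. Because $P$ is $\mcL$-covered, pick $Q_1, \ldots, Q_n \in \mcL$ with $g_e \in Q_1$, $g_f \in Q_n$, each $Q_k \subseteq P$, and $Q_k \cap Q_{k+1}$ containing a common face $h_k$. Now apply $\Sigma$: each $\Sigma(Q_k) \subseteq \Sigma(P)$ since $Q_k \subseteq P$, and each $\Sigma(Q_k)$ is $\mcL$-covered by the strengthened hypothesis above. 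The key linking step is that $\Sigma(h_k)$ is a nonempty union of faces contained in $\Sigma(Q_k) \cap \Sigma(Q_{k+1})$ (monotonicity of $\Sigma$ under inclusion, which is immediate from its definition as a union over decompositions), so $\Sigma(Q_k)$ and $\Sigma(Q_{k+1})$ share at least one face. Similarly $e' \in \Sigma(g_e) \subseteq \Sigma(Q_1)$ and $f' \in \Sigma(g_f) \subseteq \Sigma(Q_n)$.

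Finally I would splice the local $\mcL$-coverings together. Within each $\Sigma(Q_k)$, which is $\mcL$-covered, connect a chosen face of $\Sigma(h_{k-1}) \cap \Sigma(Q_k)$ to a chosen face of $\Sigma(h_k) \cap \Sigma(Q_k)$ by an $\mcL$-path lying inside $\Sigma(Q_k) \subseteq \Sigma(P)$; for $k=1$ start instead at $e'$, and for $k=n$ end at $f'$. Concatenating these finitely many $\mcL$-paths gives a single sequence of patterns in $\mcL$, all contained in $\Sigma(P)$, consecutive ones sharing a face, starting at $e'$ and ending at $f'$; at each junction the shared face (an element of $\Sigma(h_k)$) ensures condition~\eqref{item:cov2} holds across the splice. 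Hence $\Sigma(P)$ is $\mcL$-covered. The only point needing a little care — and the place where a naive argument could break — is verifying that $\Sigma$ is monotone for inclusion and that $\Sigma(h_k)$ is genuinely nonempty and genuinely sits in the intersection $\Sigma(Q_k) \cap \Sigma(Q_{k+1})$ rather than merely in each image separately; this follows directly from the set-theoretic formula defining $\EOS$ on unions of faces, so no real obstacle arises, but it should be stated explicitly rather than taken for granted.
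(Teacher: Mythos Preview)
Your argument is correct and is the standard proof of this fact. Note that the paper does not actually supply a proof of Proposition~\ref{prop:coverprop}: it attributes the result to~\cite{IO94} and states it without argument, so there is nothing to compare against beyond observing that your chain-and-splice approach is exactly the natural one. One small remark: your aside about $\Sigma(h_k)$ being genuinely nonempty is well placed; this is guaranteed because unimodularity of $\Ms$ forces every letter $i$ to occur in some $\sigma(j)$ (otherwise a row of $\Ms$ would vanish), so the defining union for $\EOSS([\bfx,i]^\star)$ is never empty.
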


We will see in Example~\ref{exam:badannulus}
that $\mcL$-coverings are not sufficient for our purposes,
so we introduce \emph{strong} $\mcL$-coverings.
A pattern $P$ is \tdef{strongly $\mcL$-covered} if $P$ is $\mcL$-covered and if
for every pattern $X \subseteq P$ that is edge-connected and consists of two faces,
there exists a pattern $Y \in \mcL$ such that $X \subseteq Y \subseteq P$.

\begin{exam}
Let $\smash{\LBrun =
\{
\input{fig/E12b.tex}     ,
\input{fig/E13b.tex}     ,
\input{fig/E23a.tex}     ,
\input{fig/E23b.tex}     ,
\input{fig/E33a.tex}     ,
\input{fig/E33b.tex}     ,
\input{fig/U.tex}        ,
\input{fig/E22aBrun.tex}
\}}$
(a set of patterns which will later be used with the Brun substitutions in Section~\ref{sect:tech_proofs}),
and let
$P_1 =
\myvcenter{%
\begin{tikzpicture}
[x={(-0.173205cm,-0.100000cm)}, y={(0.173205cm,-0.100000cm)}, z={(0.000000cm,0.200000cm)}]
\definecolor{facecolor}{rgb}{0.800,0.800,0.800}
\fill[fill=facecolor, draw=black, shift={(1,-1,0)}]
(0, 0, 0) -- (1, 0, 0) -- (1, 1, 0) -- (0, 1, 0) -- cycle;
\fill[fill=facecolor, draw=black, shift={(0,0,0)}]
(0, 0, 0) -- (1, 0, 0) -- (1, 1, 0) -- (0, 1, 0) -- cycle;
\end{tikzpicture}}
$\,,
$P_2 =
\myvcenter{%
\begin{tikzpicture}
[x={(-0.173205cm,-0.100000cm)}, y={(0.173205cm,-0.100000cm)}, z={(0.000000cm,0.200000cm)}]
\definecolor{facecolor}{rgb}{0.800,0.800,0.800}
\fill[fill=facecolor, draw=black, shift={(0,0,0)}]
(0, 0, 0) -- (0, 0, 1) -- (1, 0, 1) -- (1, 0, 0) -- cycle;
\fill[fill=facecolor, draw=black, shift={(-1,0,0)}]
(0, 0, 0) -- (1, 0, 0) -- (1, 1, 0) -- (0, 1, 0) -- cycle;
\fill[fill=facecolor, draw=black, shift={(1,0,0)}]
(0, 0, 0) -- (0, 0, 1) -- (1, 0, 1) -- (1, 0, 0) -- cycle;
\fill[fill=facecolor, draw=black, shift={(0,0,0)}]
(0, 0, 0) -- (1, 0, 0) -- (1, 1, 0) -- (0, 1, 0) -- cycle;
\end{tikzpicture}}
$\,,
$P_3 =
\smash{\myvcenter{%
\begin{tikzpicture}
[x={(-0.173205cm,-0.100000cm)}, y={(0.173205cm,-0.100000cm)}, z={(0.000000cm,0.200000cm)}]
\definecolor{facecolor}{rgb}{0.800,0.800,0.800}
\fill[fill=facecolor, draw=black, shift={(0,-1,1)}]
(0, 0, 0) -- (1, 0, 0) -- (1, 1, 0) -- (0, 1, 0) -- cycle;
\fill[fill=facecolor, draw=black, shift={(0,0,0)}]
(0, 0, 0) -- (0, 0, 1) -- (1, 0, 1) -- (1, 0, 0) -- cycle;
\fill[fill=facecolor, draw=black, shift={(0,0,0)}]
(0, 0, 0) -- (0, 1, 0) -- (0, 1, 1) -- (0, 0, 1) -- cycle;
\fill[fill=facecolor, draw=black, shift={(-1,0,1)}]
(0, 0, 0) -- (1, 0, 0) -- (1, 1, 0) -- (0, 1, 0) -- cycle;
\fill[fill=facecolor, draw=black, shift={(-1,-1,1)}]
(0, 0, 0) -- (1, 0, 0) -- (1, 1, 0) -- (0, 1, 0) -- cycle;
\end{tikzpicture}}}
$\,,
$P_4 =
\smash{\myvcenter{%
\begin{tikzpicture}
[x={(-0.173205cm,-0.100000cm)}, y={(0.173205cm,-0.100000cm)}, z={(0.000000cm,0.200000cm)}]
\definecolor{facecolor}{rgb}{0.800,0.800,0.800}
\fill[fill=facecolor, draw=black, shift={(-1,-1,1)}]
(0, 0, 0) -- (1, 0, 0) -- (1, 1, 0) -- (0, 1, 0) -- cycle;
\fill[fill=facecolor, draw=black, shift={(-1,0,1)}]
(0, 0, 0) -- (1, 0, 0) -- (1, 1, 0) -- (0, 1, 0) -- cycle;
\fill[fill=facecolor, draw=black, shift={(0,0,0)}]
(0, 0, 0) -- (0, 0, 1) -- (1, 0, 1) -- (1, 0, 0) -- cycle;
\fill[fill=facecolor, draw=black, shift={(0,0,0)}]
(0, 0, 0) -- (1, 0, 0) -- (1, 1, 0) -- (0, 1, 0) -- cycle;
\fill[fill=facecolor, draw=black, shift={(0,-1,1)}]
(0, 0, 0) -- (1, 0, 0) -- (1, 1, 0) -- (0, 1, 0) -- cycle;
\fill[fill=facecolor, draw=black, shift={(0,0,0)}]
(0, 0, 0) -- (0, 1, 0) -- (0, 1, 1) -- (0, 0, 1) -- cycle;
\end{tikzpicture}}}
$\,.
Then
$P_1$ is neither $\LBrun$- nor $\LJP$-covered;
$P_2$ is not $\LBrun$-covered, but it is strongly $\LJP$-covered;
$P_3$ is $\LBrun$- and $\LJP$-covered, but not strongly covered.;
and $P_4$ is strongly $\LBrun$- and $\LJP$-covered.
\end{exam}

\subsection{The annulus property}
\label{subsec:annulus_property}

In this section we define $\mcL$-annuli
and we introduce Property~A,
which is a sufficient condition under which
we can prove that a dual substitution $\Sigma$
verifies the \tdef{annulus property}:
if $A$ is an $\mcL$-annulus of $P$,
then $\Sigma(A)$ is an $\mcL$-annulus of $\Sigma(P)$
(Proposition~\ref{prop:annulus_induction}).

The \tdef{boundary} $\partial P$ of a pattern $P$
is the union of the edges $e$ of the faces of $f$
such that $e$ is contained in one face only.
An \tdef{$\mcL$-annulus}
of a simply connected pattern $P$
is a pattern $A$ such that
$A$ is strongly $\mcL$-covered,
$A$ and $P$ have no face in common,
and $P \cap \partial(P \cup A) = \varnothing$.
Note that the condition $P \cap \partial(P \cup A) = \varnothing$
in the above definition is a concise way to express the fact that
the $\mcL$-covered set $A$ is a ``good surrounding'' of $\mcU$.

\begin{exam}
Let $A_1$, $A_2$, $A_3$ and $A_4$ be defined by
\[
\mcU \cup A_1 = \input{fig/VBrun2_broken.tex}
\quad
\mcU \cup A_2 = \input{fig/VBrun2_broken2.tex}
\quad
\mcU \cup A_3 = \input{fig/VBrun2_mod.tex}
\quad
\mcU \cup A_4 = \input{fig/VBrun2.tex}\,,
\]
where $\mcU$ is depicted in dark gray.
Then,
$A_1$ is not an annulus of $\mcU$ because
$\mcU \cap \partial(\mcU \cup A_1)$ is non-empty (it contains an edge);
$A_2$ is not an annulus of $\mcU$ because
$\mcU \cap \partial(\mcU \cup A_2)$ is non-empty (it contains a point);
$A_3$ is not an $\LBrun$-annulus of $\mcU$ because it is not strongly $\LBrun$-covered;
indeed, if $X = \input{fig/E12a.tex} \subseteq A_3$
is the pattern depicted in white in the picture,
there does not exist a pattern $Y \in \LBrun$ such that $X \subseteq Y \subseteq A_3$;
and $A_4$ is an $\LBrun$-annulus of $\mcU$.
\end{exam}

\begin{exam}
\label{exam:badannulus}
Let $P$ be a pattern that consists of the single face $[\mathbf 0,3]^\star$ (shown in dark gray below)
and let $A$ be the set of faces surrounding $P$ defined in the picture below.
\[
A \cup P = \input{fig/badannulus.tex}
\qquad
\SBrun_1 \SBrun_3(A \cup P) = \hspace{-1em}\input{fig/badannulus_Brun.tex}
\qquad
\SJP_{0,2}(A \cup P) = \hspace{-1em}\input{fig/badannulus_JP.tex}
\]
The pattern $A$ is both $\LBrun$- and $\LJP$-covered, but \emph{not} strongly (so it is not a valid annulus).
Its images by Brun or Jacobi-Perron substitutions fail to be topological annuli (as shown above),
which illustrates the necessity of \emph{strong} coverings
if we want the image of an annulus to remain an annulus.
The faulty two-face pattern and its images are shown in white.
\end{exam}

\begin{defi}
\label{defi:propA}
Let $\Sigma$ be a dual substitution and let $\mcL$ a set of edge-connected patterns.
We say that \tdef{Property~A} holds for $\Sigma$ with respect to $\mcL$ when restricted to a family of discrete planes if
for every connected two-face pattern $f \cup g$ of this family of discrete planes
and for every disconnected two-face pattern $f_0 \cup g_0$ such that
$f \in \Sigma(f_0)$ and $g \in \Sigma(g_0)$, the following holds:
there do not exist a pattern $P$ and an $\mcL$-annulus $A$ of $P$
(both included in a common discrete plane $\Gamma$ of the given family of discrete planes)
such that $f_0 \in P$ and $g_0 \in \Gamma \setminus (A \cup P)$.
\end{defi}

The interest of the above technical definition of Property~A
becomes apparent in the proof of Proposition~\ref{prop:annulus_induction} below,
thanks to which Property~A (and some $\mcL$-covering assumptions)
provide sufficient conditions for the annulus property.
The main interest of Property~A is that it can be checked by handling finitely many cases, namely
by enumerating all the two-face connected patterns $f \cup g$ that admit a disconnected preimage.
This is done for the Brun and Jacobi-Perron substitutions in Proposition
\ref{prop:cov_Brun} and~\ref{prop:cov_JP}.

\begin{prop}
\label{prop:annulus_induction}
Let $\Sigma$ be a dual substitution and $\mcL$ be a set of edge-connected patterns
such that Property~A holds for $\Sigma$ with respect to $\mcL$ and to a given family of discrete planes.
Assume that the image by $\Sigma$ of every strongly $\mcL$-covered pattern is strongly $\mcL$-covered.
Let $P$ be a pattern and $A$ be an $\mcL$-annulus of $P$,
both included in a common discrete plane of the given family.
Then $\Sigma(A)$ is an $\mcL$-annulus of $\Sigma(P)$.
\end{prop}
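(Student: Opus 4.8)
The plan is to check the three conditions in the definition of an $\mcL$-annulus for the pair $(\Sigma(A),\Sigma(P))$: that $\Sigma(A)$ is strongly $\mcL$-covered, that $\Sigma(A)$ and $\Sigma(P)$ have no face in common, and that $\Sigma(P)\cap\partial(\Sigma(P)\cup\Sigma(A))=\varnothing$ (plus the fact that $\Sigma(P)$ stays simply connected, which I address at the end). Let $\Gamma$ be the common discrete plane of the family containing both $P$ and $A$; by Proposition~\ref{prop:imgplane}, $\Sigma(\Gamma)$ is again a discrete plane, which contains $\Sigma(A)$ and $\Sigma(P)$ and, the family being stable under $\Sigma$, again belongs to the given family. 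The first condition is immediate: $A$ is strongly $\mcL$-covered because it is an $\mcL$-annulus of $P$, and $\Sigma$ sends strongly $\mcL$-covered patterns to strongly $\mcL$-covered ones by hypothesis. For the second, if $f\in A$ and $g\in P$ then $f$ and $g$ are \emph{distinct} faces of $\Gamma$ (as $A$ and $P$ share no face), so $\Sigma(f)\cap\Sigma(g)$ contains no face by Proposition~\ref{prop:imgplane}; since the faces of the discrete plane $\Sigma(\Gamma)$ have pairwise disjoint interiors, a face lying in both $\Sigma(A)$ and $\Sigma(P)$ would be contained in some such $\Sigma(f)\cap\Sigma(g)$, which is impossible.

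The core is the boundary condition, and this is where Property~A (Definition~\ref{defi:propA}) enters. Assume for contradiction that $\Sigma(P)$ meets $\partial(\Sigma(P)\cup\Sigma(A))$. Using that a discrete plane is a combinatorial surface in which each edge lies on exactly two faces, this produces an edge $e$ which is an edge of some face $f\in\Sigma(P)$ and which lies on exactly one face of $\Sigma(P)\cup\Sigma(A)$; the second face $g$ of $\Sigma(\Gamma)$ carrying $e$ then satisfies $g\notin\Sigma(P)\cup\Sigma(A)$, so $f\neq g$ and $f\cup g$ is a connected two-face pattern of $\Sigma(\Gamma)$. Pulling back, there are $f_0\in P$ with $f\in\Sigma(f_0)$ and $g_0\in\Gamma$ with $g\in\Sigma(g_0)$, and $g_0\notin P\cup A$ (otherwise $g\in\Sigma(P)\cup\Sigma(A)$), so $f_0\in P$ and $g_0\in\Gamma\setminus(A\cup P)$. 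Now split according to the mutual position in $\Gamma$ of the faces $f_0$ and $g_0$. If $f_0=g_0$, then $g\in\Sigma(f_0)\subseteq\Sigma(P)$, absurd. If $f_0\neq g_0$ but they share an edge $e_0$, then $e_0\subseteq f_0\subseteq P$ and $e_0$ lies on exactly one face of $P\cup A$ (namely $f_0$, since in $\Gamma$ only $f_0$ and $g_0$ carry $e_0$ and $g_0\notin P\cup A$), so $e_0\in P\cap\partial(P\cup A)$, contradicting that $A$ is an $\mcL$-annulus of $P$. Finally, if $f_0\cup g_0$ is a disconnected two-face pattern, then the connected two-face pattern $f\cup g$ of $\Sigma(\Gamma)$, the disconnected pattern $f_0\cup g_0$ with $f\in\Sigma(f_0)$ and $g\in\Sigma(g_0)$, the pattern $P$ and its $\mcL$-annulus $A$ with $f_0\in P$ and $g_0\in\Gamma\setminus(A\cup P)$, together contradict Property~A. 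So no such meeting point exists, and $\Sigma(P)\cap\partial(\Sigma(P)\cup\Sigma(A))=\varnothing$.

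I expect the main obstacle to be the first step of the previous paragraph: extracting, from an arbitrary point of $\Sigma(P)\cap\partial(\Sigma(P)\cup\Sigma(A))$, an edge $e$ that is genuinely an edge of a face of $\Sigma(P)$. When the point is interior to an edge this is immediate; when it is a vertex one must analyse the cyclic arrangement of the faces of $\Sigma(\Gamma)$ around it and rule out the configuration in which $\Sigma(P)$ is screened off from $\partial(\Sigma(P)\cup\Sigma(A))$ by faces of $\Sigma(A)$ — this local analysis is exactly where the \emph{strong} $\mcL$-covering of $\Sigma(A)$ is used, mere $\mcL$-covering being insufficient (cf. Example~\ref{exam:badannulus}). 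A second point to dispatch is that $\Sigma(P)$ remains simply connected, which in the present framework is guaranteed by the $\mcL$-covering hypotheses carried along the inductive construction of annuli. Note that the case distinction above is set up precisely so that every case except the disconnected-preimage one follows elementarily from $A$ being an $\mcL$-annulus of $P$, and that last case is the one Property~A was designed to exclude.
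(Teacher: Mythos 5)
Your proof is correct and follows the same route as the paper's: the strong covering of $\Sigma(A)$ comes directly from the hypothesis, and a violation of $\Sigma(P)\cap\partial(\Sigma(P)\cup\Sigma(A))=\varnothing$ is converted into a connected two-face pattern $f\cup g$ with a disconnected preimage $f_0\cup g_0$, $f_0\in P$, $g_0\in\Gamma\setminus(A\cup P)$, which is exactly the configuration Property~A forbids. The paper's proof is just a terser version of this, leaving implicit the steps you spell out (face-disjointness via Proposition~\ref{prop:imgplane}, and the exclusion of the cases $f_0=g_0$ and $f_0,g_0$ edge-adjacent using $P\cap\partial(P\cup A)=\varnothing$).
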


\begin{proof}
The pattern $A$ is strongly $\mcL$-covered because it is an $\mcL$-annulus,
so $\Sigma(A)$ is also strongly $\mcL$-covered, by assumption.
It remains to show that $\Sigma(P) \cap \partial(\Sigma(P) \cup \Sigma(A)) = \varnothing$.
Suppose the contrary.
This means that there exist faces $f,g,f_0,g_0$ such that
$f \in \Sigma(f_0)$, $g \in \Sigma(g_0)$,
$f \cup g$ is connected, and $f_0 \cup g_0$ is disconnected,
as shown below.
\[
\myvcenter{\input{fig/lemcase0.tex}} \quad \overset{\Sigma}{\longmapsto} \quad
\myvcenter{\input{fig/lemcase1.tex}}
\]
These are precisely the conditions stated in Property~A,
so such a situation cannot occur and the proposition holds.
\end{proof}

The annulus property will be crucially used in the proof of Theorem~\ref{theo:seeds}:
it ensures that the minimal combinatorial radius is strictly increasing under iteration of dual substitutions.

\begin{rema}
\label{rema:arith_assump}
In the definition of Property~A, we consider a restriction to a given family of planes,
because it is a natural assumption for the sets of substitutions that we are studying.
Indeed, for Brun substitutions, vectors $\bfv$ that are expanded satisfy $ 0 < \bfv_1< \bfv_2< \bfv_3$,
and for Jacobi-Perron substitutions, vectors $\bfv$ that are expanded satisfy $ 0 < \bfv_1 < \bfv_3$ and $ 0 < \bfv_2< \bfv_3$.
These restrictions allow us to significantly simplify the proofs of Section~\ref{sect:tech_proofs}.
\end{rema}

\subsection{Generation graphs}
\label{subsec:generation_graphs}
Let $\Sigma_1, \ldots, \Sigma_\ell$ be dual substitutions,
let $\mcV$ be a finite family of candidate seed patterns,
and let $\mcW$ be a finite family of patterns that we want to ``reach'' in the following sense:
given a sequence $(i_1, \ldots, i_n) \in \{1, \ldots, \ell\}^n$ and given $V \in \mcV$,
we want to characterize when $\Sigma_{i_1} \cdots \Sigma_{i_n}(V)$
contains a pattern $W \in \mcW$ for every sufficiently large $n$.
(Note that $W$ might depend on $n$.)
Typically, $\mcV$ will be a set of patterns containing~\input{fig/U.tex}\,,
and $\mcW$ will be the set of all the possible minimal $\mcL$-annuli of the $V \in \mcV$.
The goal is then to prove that iterating substitutions starting from some $V \in \mcV$
eventually generates a pattern containing an annulus $W$ around a pattern in $\mcV$,
in view of ``initializing'' the strategy described in Section~\ref{subsec:mcfgen_strategy}.

This is achieved thanks to the algorithmic construction of generation graphs below,
where we recursively backtrack all the possible preimages of the faces in the patterns of $\mcW$,
and we check that they eventually come back to a pattern of $\mcV$.
This is formalized in Proposition~\ref{prop:gengraph} below,
and used effectively in Sections~\ref{subsec:graph_Brun} and~\ref{subsec:graph_JP}.

For the above method to work in practice,
we will need to filter out some useless faces when backtracking preimages of faces.
We will then use a \tdef{filter set} $\mcF$,
by restricting the allowed preimages to $\mcF$ only.
For example, the set $\FBrun$ that will be used with the Brun substitutions
is the set of all the faces that belong to a discrete plane
$\Gv$ with $0 < \bfv_1 < \bfv_2 < \bfv_3$.

\begin{defi}
\label{defi:gengraph}
Let $\Sigma_1, \ldots, \Sigma_\ell$ be dual substitutions,
let $\mcF$ be a family of faces (the filter set)
and let $\mcX$ be a finite set of faces (the initial set).
The \tdef{generation graph}
associated with $\Sigma_1, \ldots, \Sigma_\ell$, $\mcF$ and $\mcX$
is defined as the graph $\mcG = \bigcup_{n \geq 0} \mcG_n$
where $(\mcG_n)_{n \geq 0}$ is the sequence of directed graphs (whose vertices are faces),
defined by induction as follows.
\begin{enumerate}
\item \emph{Initialization.}
    $\mcG_0$ has no edges and its set of vertices is $\mcX$.
\item \emph{Iteration.}
    Suppose that $\mcG_n$ is constructed for some $n \geq 0$.
    Start with $\mcG_{n+1}$ equal to~$\mcG_n$.
    Then, for each vertex $f$ of $\mcG_n$, for each $i \in \{1, \ldots, \ell\}$,
    and for each $g \in \mcF$ such that $f \in \Sigma_i(g)$,
    add the vertex $g$ and the edge $\smash{g \overset{i}{\rightarrow} f}$ to $\mcG_{n+1}$.
\end{enumerate}
\end{defi}

\begin{rema}
The non-decreasing union $\mcG = \bigcup_{n \geq 1} \mcG_n$
considered in Definition~\ref{defi:gengraph} is not necessarily finite.
It is finite if and only if $\mcG_n = \mcG_{n+1}$ for some $n \geq 1$.
This is the case for Brun substitutions (see Section~\ref{subsec:graph_Brun}).
However, even if $\mcG_n = \mcG_{n+1}$ never occurs,
the infinite graph $\mcG$ can still be successfully exploited,
as will be done for Jacobi-Perron substitutions (see Section~\ref{subsec:graph_JP}).

The orientation of edges in generation graphs agrees with the usual notation of function composition.
In particular, for every path
$\smash{f_n \overset{i_n}{\rightarrow} \cdots \overset{i_2}{\rightarrow} f_1 \overset{i_1}{\rightarrow} f_0}$,
we have $f_0 \in \Sigma_{i_1} \cdots \Sigma_{i_n}(f_n)$.
\end{rema}

\begin{prop}
\label{prop:gengraph}
Let
\begin{itemize}
\item $\Sigma_1, \ldots, \Sigma_{\ell}$ be dual substitutions;
\item $\mcF$ be a set of faces such that if $\Gv \subseteq \mcF$,
    then $\Sigma_i(\Gv) \subseteq \mcF$ for all $i \in \{1, \ldots, \ell\}$;
\item
    $\mcW$ be a set of patterns such that
    for every $\Gv \subseteq \mcF$ we have $W \subseteq \Gv$ for some $W \in \mcW$;
\item
    $\mcG$ be the generation graph constructed
    with substitutions $\Sigma_1, \ldots, \Sigma_\ell$,
    filter set $\mcF$ and initial set $\mcX = \bigcup_{W \in \mcW} W$;
\item $\mcV$ be a set of patterns such that
    \begin{itemize}
    \item for every $\Gv \subseteq \mcF$ we have $V \subseteq \Gv$ for some $V \in \mcV$,
    \item for every $\Gv \subseteq \mcF$ there exists $V \in \mcV$ such that $\Gv \cap \bigcup_{V \in \mcV} V = V$,
    \item for every $V \in \mcV$ there exists $\Gv \subseteq \mcF$ such that $V \subseteq \Gv \subseteq \mcF$.
    \end{itemize}
\item
    $(i_1, \ldots, i_n) \in \{1, \ldots, \ell\}^n$.
\end{itemize}
We have:
\begin{enumerate}
\item
\label{graphprop1}
Suppose that for every path $f_n \overset{i_n}{\rightarrow} \cdots f_1 \overset{i_1}{\rightarrow} f_0$ in $\mcG$
we have $f_n \in \bigcup_{V \in \mcV} V$.
Then for every $V \in \mcV$ there exists $W \in \mcW$
such that $\Sigma_{i_1} \cdots \Sigma_{i_n}(V)$ contains $W$.
\item
\label{graphprop2}
For every path $f_n \overset{i_n}{\rightarrow} \cdots f_1 \overset{i_1}{\rightarrow} f_0$ in $\mcG$
we have $f_0 \notin \Sigma_{i_1} \cdots \Sigma_{i_n}(\mcU)$
if $f_n \notin \mcU$.
\end{enumerate}
\end{prop}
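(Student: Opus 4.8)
The plan is to treat the two statements separately, using throughout that, by Proposition~\ref{prop:imgplane}(1), the composite $\Sigma_{i_1}\cdots\Sigma_{i_n}$ equals $\EOS(\sigma_{i_n}\circ\cdots\circ\sigma_{i_1})$, the dual of a single unimodular substitution; hence parts (3) and (4) of Proposition~\ref{prop:imgplane} apply to it verbatim. I will also use, from the Remark following Definition~\ref{defi:gengraph}, that any path $f_n\overset{i_n}{\rightarrow}\cdots\overset{i_1}{\rightarrow}f_0$ in $\mcG$ satisfies $f_0\in\Sigma_{i_1}\cdots\Sigma_{i_n}(f_n)$.

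For~(\ref{graphprop2}), I would assume $f_n\notin\mcU$ and, for contradiction, $f_0\in\Sigma_{i_1}\cdots\Sigma_{i_n}(\mcU)$, so that $f_0\in\Sigma_{i_1}\cdots\Sigma_{i_n}([\mathbf 0,j]^\star)$ for some $j\in\{1,2,3\}$, while also $f_0\in\Sigma_{i_1}\cdots\Sigma_{i_n}(f_n)$. Every vertex of $\mcG$ lies in a discrete plane of the family under consideration (this holds for the faces of the patterns of $\mcW$ that form the initial set, and, by the closure hypothesis, for every face subsequently added through $\mcF$), and $\mcU$ is contained in every discrete plane; hence $f_n$ and $[\mathbf 0,j]^\star$ lie in a common discrete plane. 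They are distinct because $f_n\notin\mcU$, so Proposition~\ref{prop:imgplane}(4), applied to $\Sigma_{i_1}\cdots\Sigma_{i_n}$, shows that $\Sigma_{i_1}\cdots\Sigma_{i_n}(f_n)\cap\Sigma_{i_1}\cdots\Sigma_{i_n}([\mathbf 0,j]^\star)$ contains no face, contradicting the fact that it contains $f_0$.

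For~(\ref{graphprop1}), fix $V\in\mcV$ and, by the third condition on $\mcV$, a discrete plane $\Gamma=\Gamma_{\bfv}$ with $V\subseteq\Gamma\subseteq\mcF$. By Proposition~\ref{prop:imgplane}(3) the image $\Sigma_{i_1}\cdots\Sigma_{i_n}(\Gamma)$ is again a discrete plane $\Gamma_{\bfw}$, which is contained in $\mcF$ by the closure hypothesis, so there is $W\in\mcW$ with $W\subseteq\Gamma_{\bfw}$. I claim $W\subseteq\Sigma_{i_1}\cdots\Sigma_{i_n}(V)$, which suffices. Fix a face $f_0\in W$. Set $\Gamma^{(n)}=\Gamma$ and $\Gamma^{(k-1)}=\Sigma_{i_k}(\Gamma^{(k)})$ for $k=n,\dots,1$; by Proposition~\ref{prop:imgplane}(3) and the closure hypothesis each $\Gamma^{(k)}$ is a discrete plane contained in $\mcF$, and $\Gamma^{(0)}=\Gamma_{\bfw}\ni f_0$. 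Since $\Gamma^{(0)}=\Sigma_{i_1}(\Gamma^{(1)})$, there is a face $f_1\in\Gamma^{(1)}$ with $f_0\in\Sigma_{i_1}(f_1)$, and $f_1$ is the unique such face by Proposition~\ref{prop:imgplane}(4). Iterating, I obtain faces $f_k\in\Gamma^{(k)}\subseteq\mcF$ with $f_{k-1}\in\Sigma_{i_k}(f_k)$ for all $k$. Because $f_0\in W\subseteq\mcX$, the chain $f_n\overset{i_n}{\rightarrow}\cdots\overset{i_1}{\rightarrow}f_0$ is, by the construction of Definition~\ref{defi:gengraph}, a path of $\mcG$, so the hypothesis of~(\ref{graphprop1}) gives $f_n\in\bigcup_{V'\in\mcV}V'$. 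Since also $f_n\in\Gamma$, and the second condition on $\mcV$ forces $\Gamma\cap\bigcup_{V'\in\mcV}V'$ to equal a single element of $\mcV$ — which must be $V$, because $V\subseteq\Gamma\cap\bigcup_{V'\in\mcV}V'$ and no element of $\mcV$ properly contains another — I conclude $f_n\in V$, whence $f_0\in\Sigma_{i_1}\cdots\Sigma_{i_n}(f_n)\subseteq\Sigma_{i_1}\cdots\Sigma_{i_n}(V)$. Letting $f_0$ range over $W$ gives $W\subseteq\Sigma_{i_1}\cdots\Sigma_{i_n}(V)$.

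The routine parts are the inductive verifications that every $\Gamma^{(k)}$ is a discrete plane lying in $\mcF$ and that the sequence $(f_k)$ genuinely forms a path of $\mcG$. I expect the main obstacle to be the last step of~(\ref{graphprop1}): extracting from the three closure conditions on $\mcV$ that the preimage chain returns to the \emph{specific} seed $V$ rather than to some larger sub-pattern of $\bigcup_{V'\in\mcV}V'$; this reduces to the fact — built into the way $\mcV$ is chosen in the applications — that each discrete plane contained in $\mcF$ contains exactly one element of $\mcV$. A secondary point, in~(\ref{graphprop2}), is ensuring that $f_n$ and $\mcU$ lie in a common discrete plane, which relies on the initial set and the filter set consisting of faces of the discrete planes of the family at hand.
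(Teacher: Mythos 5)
Your proposal is correct and follows essentially the same route as the paper's proof: for Statement~1 you backtrack a face of $W$ through the chain of image planes $\Sigma_{i_k}\cdots\Sigma_{i_n}(\Gv)$ using Proposition~\ref{prop:imgplane} and the closure of $\mcF$ to obtain a path of $\mcG$ ending in $\bigcup_{V\in\mcV}V$, then invoke the hypotheses on $\mcV$ to land in the specific seed $V$; for Statement~2 you use that $f_n$ and $\mcU$ are disjoint patterns in a common discrete plane and that dual substitutions preserve face-disjointness. The one step you flag (that the plane $\Gv\supseteq V$ satisfies $\Gv\cap\bigcup_{V'\in\mcV}V'=V$ itself) is treated just as tersely in the paper ("by assumption on $\mcV$"), so your slightly more explicit handling of it is fine.
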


\begin{proof}
We first prove Statement~\ref{graphprop1}.
Let $V \in \mcV$ and let $\Gv \subseteq \mcF$ be a discrete plane containing~$V$.
Let $W \in \mcW$ be such that $W \subseteq \Sigma_{i_1} \cdots \Sigma_{i_n}(\Gv)$
(we use the second and third assumptions of the proposition).
We will prove that $\Sigma_{i_1} \cdots \Sigma_{i_n}(V)$ contains $W$.
Let $f \in W$ and set $f_0 = f$.
By Proposition~\ref{prop:imgplane}, there exists $f_1$ in the discrete plane $\Sigma_{i_1}(\Gv)$
such that $f_0 \in \Sigma_{i_1}(f_1)$.
One has $f_1 \in \mcF$ (we again use the second assumption).
By repeatedly applying Proposition~\ref{prop:imgplane} and by definition of generation graphs,
there exists a path $\smash{f_n \overset{i_n}{\rightarrow} \cdots f_1 \overset{i_1}{\rightarrow} f_0}$ in $\mcG$.
Hence, $f_n \in \bigcup_{V \in \mcV} V$ according to the assumption on $i_1, \ldots, i_n$.
Furthermore $f_n \in \Sigma_{i_1} \cdots \Sigma_{i_n}(\Gv)$.
Hence, by assumption on $\mcV$, we have $f_n \in V$,
so $f \in \Sigma_{i_1} \cdots \Sigma_{i_n}(\mcV)$.
This holds for all $f \in W$ so the statement is proved.

We now prove Statement~\ref{graphprop2}.
Let $\smash{f_n \overset{i_n}{\rightarrow} \cdots f_1 \overset{i_1}{\rightarrow} f_0}$ be a path in $\mcG$
such that $f_n \notin \mcU$.
The patterns $\mcU$ and $f_n$ are disjoint patterns included in a common discrete plane
(because $\mcU$ is included in every discrete plane).
Hence, Proposition~\ref{prop:imgplane} implies that
$\Sigma_{i_1} \cdots \Sigma_{i_n}(\mcU)$
and
$\Sigma_{i_1} \cdots \Sigma_{i_n}(f_n)$
do not have any face in common for all $n \geq 1$,
which implies that $f_0 \notin \Sigma_{i_1} \cdots \Sigma_{i_n}(\mcU)$
since $f_0 \in \Sigma_{i_1} \cdots \Sigma_{i_n}(f_n)$.
\end{proof}

\begin{rema}
The technical assumptions on $\mcV$ and $\mcW$ in the previous proposition cannot be avoided
(we rely on the properties of discrete planes and minimal annuli).
Also, the assumption on $\mcF$ always holds for the set of faces of discrete planes and for
any complete multidimensional continued fraction algorithm,
which includes the additive Brun and Jacobi-Perron algorithms considered here.
Complete means that the continued fraction transformation is onto.

Statement~\ref{graphprop1} of Proposition~\ref{prop:gengraph}
will be used to obtain ``positive'' results,
such as the fact that a given initial pattern $V$ always generates an entire discrete plane
(see Theorem~\ref{theo:seeds}).
The technical assumptions on $\mcF$, $\mcV$ and $\mcW$ are unavoidable
but are natural and easy to check for the substitutions defined by multidimensional continued fraction algorithms
such as the Brun and Jacobi-Perron substitutions in Section~\ref{subsec:seeds}.

Conversely, Statement~\ref{graphprop2}
will be used in Theorems~\ref{theo:bad_Brun} and~\ref{theo:bad_JP}
to characterize the sequences $(i_n)_{n \geq 1}$
for which iterating the corresponding sequence of dual substitutions \emph{fails} to generate an
entire discrete plane when starting from $\mcU$.
\end{rema}

\section{Technical proofs}
\label{sect:tech_proofs}

We have gathered in this section all the proofs which make specific use of Brun and Jacobi-Perron substitutions.
Proofs devoted to minimal annuli and seeds are handled in Section~\ref{subsec:seeds},
to covering properties in Section~\ref{subsec:covprop},
to Property~A in Section~\ref{subsec:propA},
to generation graphs for the Brun substitutions in Section~\ref{subsec:graph_Brun},
to generation graphs for the Jacobi-Perron substitutions in Section~\ref{subsec:graph_JP},
and to generating translates of seeds in Section~\ref{subsec:transseeds}.

The following two sets of patterns will be used throughout this section:
\begin{align*}
\LBrun &=
\big\{
\input{fig/E12b.tex}    \ % \,,\;
\input{fig/E13b.tex}    \ % \,,\;
\input{fig/E23a.tex}    \ % \,,\;
\input{fig/E23b.tex}    \ % \,,\;
\input{fig/E33a.tex}    \ % \,,\;
\input{fig/E33b.tex}    \ % \,,\;
\input{fig/U.tex}       \ % \,,\;
\input{fig/E22aBrun.tex}
\big\},
\\
\LJP &=
\big\{
\input{fig/E12b.tex}    \ % \,,\;
\input{fig/E13a.tex}    \ % \,,\;
\input{fig/E13b.tex}    \ % \,,\;
\input{fig/E23a.tex}    \ % \,,\;
\input{fig/E23b.tex}    \ % \,,\;
\input{fig/E33a.tex}    \ % \,,\;
\input{fig/E33b.tex}    \ % \,,\;
\input{fig/U.tex}       \ % \,,\;
\input{fig/E22aJP.tex}  \ % \,,\;
\input{fig/E11aJP.tex}
\big\}.
\end{align*}

\begin{rema}
\label{rema:arithplane}
We will often use the arithmetic restrictions in
the definition of discrete planes
in order to simplify the combinatorics of the patterns that occur.
For example, if $\bfv_1 \leq \bfv_3$ and $\bfv_2 \leq \bfv_3$,
then $\Gv$ cannot contain any translate of the two-face pattern
$[\mathbf 0, 1]^\star \cup [(0, 1, 0), 1]^\star = \input{fig/E11a.tex}$
or $[\mathbf 0, 2]^\star \cup [(0, 0, 1), 2]^\star = \input{fig/E22b.tex}$.
If moreover $\bfv_1 \leq \bfv_2$, then the pattern
$[\mathbf 0, 1]^\star \cup [(0, 1, 0), 1]^\star = \input{fig/E11b.tex}$
also never occurs.
\end{rema}

\paragraph{Preimages by dual substitutions}
In some of the following proofs we will need to compute preimages of faces by $\SBrun_i$ or $\SJP_i$.
By abuse of notation, given a dual substitution $\Sigma$,
 we will write $\Sigma^{-1}([\bfx, i]^*)$
for the union of faces $[\bfy, j]^*$ such that $\Sigma([\bfy, j]^*)$ contains the face $[\bfx, i]^*$.
Computing $\Sigma^{-1}([\bfx, i]^*)$
can be done directly from the definition of dual substitutions,
so such computations will be omitted in the following.

\subsection{Minimal annuli and seeds}
\label{subsec:seeds}

The following two propositions list all the possible $\LBrun$- and $\LJP$-annuli in an admissible discrete plane.
Proposition~\ref{prop:minannuli_JP} can be proved in the same way as Proposition~\ref{prop:minannuli_Brun}.

\begin{prop}[Brun minimal annuli]
\label{prop:minannuli_Brun}
Let $P$ be a pattern contained in a discrete plane $\Gv$ with $0 < \bfv_1 < \bfv_2 < \bfv_3$.
If $P$ contains $\mcU$ and an $\LBrun$-annulus of $\mcU$,
then $P$ must contain one of the following two patterns,
each of which contains an $\LBrun$-annulus (shown in light gray)
of $\mcU$ (shown in dark gray):
\begin{align*}
\VBrun_1 &= \input{fig/VBrun1.tex} &
\VBrun_2 &= \input{fig/VBrun2.tex} \ .
\end{align*}
\end{prop}

\begin{proof}
According to Remark~\ref{rema:arithplane}, the algebraic restriction $0 < \bfv_1 < \bfv_2 < \bfv_3$
enables us to enumerate all the surroundings of the pattern $\mcU$ in an admissible discrete plane.
It is then easy to check that such a surrounding
either contains $\VBrun_1$ or $\VBrun_2$,
or contains \smash{\input{fig/E11a.tex}}\,, \smash{\input{fig/E22b.tex}} or \smash{\input{fig/E11b.tex}},
which is forbidden by Remark~\ref{rema:arithplane} because $\bfv_1 < \bfv_2 < \bfv_3$.
\end{proof}

\begin{prop}[Jacobi-Perron minimal annuli]
\label{prop:minannuli_JP}
Let $P$ be a pattern contained in a discrete plane $\Gv$ with $0 < \bfv_1 < \bfv_3$ and $0 < \bfv_2 < \bfv_3$.
If $P$ contains $\mcU$ and an $\LJP$-annulus of $\mcU$,
then $P$ must contain one of the following four patterns,
each of which contains an $\LJP$-annuli (shown in light gray)
of $\mcU$ (shown in dark gray):
\begin{align*}
\VJP_1 &= \input{fig/VJP1.tex} &
\VJP_2 &= \input{fig/VJP2.tex} &
\VJP_3 &= \input{fig/VJP3.tex} &
\VJP_4 &= \input{fig/VJP4.tex} \ .
\end{align*}
\end{prop}

\subsection{Covering properties}
\label{subsec:covprop}

\begin{prop}[Strong $\LBrun$-covering]
\label{prop:cov_Brun}
Let $P$ be an $\LBrun$-covered pattern
such that the patterns
\input{fig/E11b.tex}\,,
\input{fig/E11a.tex} and
\input{fig/E22b.tex}
do not occur in $P$.
Then $\smash{\SBrun_i(P)}$ is strongly $\LBrun$-covered for each $i \in \{1,2,3\}$.
\end{prop}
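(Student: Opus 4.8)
The plan is to verify, in turn, the two clauses of the definition of strong $\LBrun$-covering for $\SBrun_i(P)$: that $\SBrun_i(P)$ is $\LBrun$-covered, and that every edge-connected two-face pattern contained in $\SBrun_i(P)$ is contained in a single element of $\LBrun$ which is itself contained in $\SBrun_i(P)$.

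The first clause is immediate from Proposition~\ref{prop:coverprop}: since $P$ is $\LBrun$-covered by hypothesis, it suffices to check that $\SBrun_i(Q)$ is $\LBrun$-covered for every $Q\in\LBrun$ and every $i\in\{1,2,3\}$. Using the explicit images of $\SBrun_1,\SBrun_2,\SBrun_3$ on $[\mathbf 0,1]^\star,[\mathbf 0,2]^\star,[\mathbf 0,3]^\star$ and the definition of dual substitutions, each $\SBrun_i(Q)$ is a small, explicitly described pattern, so this reduces to a finite verification, exhibiting an $\LBrun$-covering chain in each case; this is one of the enumerations performed with Sage.

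For the second clause, fix $i$ and let $X=f\cup g\subseteq\SBrun_i(P)$ with $f$ and $g$ sharing an edge. Since $\SBrun_i(P)=\bigcup_{h\in P}\SBrun_i(h)$, choose $h_f,h_g\in P$ with $f\in\SBrun_i(h_f)$ and $g\in\SBrun_i(h_g)$; here $h_f$ and $h_g$ range over the small, explicitly computable preimage sets $\SBrun_i^{-1}(f)$ and $\SBrun_i^{-1}(g)$, so the argument reduces to a finite case analysis over the translation classes of the edge-connected two-face patterns $X$ and the admissible pairs $(h_f,h_g)$. If $h_f=h_g=:h$, then $X\subseteq\SBrun_i(h)$, which is a translate of one of the two-face images $\SBrun_1([\mathbf 0,2]^\star)$, $\SBrun_2([\mathbf 0,3]^\star)$, $\SBrun_3([\mathbf 0,3]^\star)$; checking directly that each of these belongs to $\LBrun$ up to translation, we may take $Y=\SBrun_i(h)\subseteq\SBrun_i(P)$. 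If $h_f\neq h_g$, then $\{h_f,h_g\}\subseteq P$; since $f$ and $g$ are edge-adjacent, the relative position of $h_f$ and $h_g$ lies in a bounded range, so, $P$ being $\LBrun$-covered (and $\LBrun$ consisting of small patterns), $h_f$ and $h_g$ are linked by a short chain of elements of $\LBrun$ in $P$, which forces one of finitely many subpatterns $Z\subseteq P$ with $X\subseteq\SBrun_i(Z)$. For each such $Z$, either $Z$ contains one of the three patterns forbidden by hypothesis, so this configuration cannot occur and is discarded, or $\SBrun_i(Z)$ already contains some $Y\in\LBrun$ with $X\subseteq Y$, and then $Y\subseteq\SBrun_i(Z)\subseteq\SBrun_i(P)$, as required. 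Collecting the cases gives the strong $\LBrun$-covering of $\SBrun_i(P)$.

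The main obstacle is the case $h_f\neq h_g$: one must argue that the relative position of $h_f$ and $h_g$ is confined to a genuinely short list, and then, for each entry, determine precisely which additional faces of $P$ are forced (by $\LBrun$-coverability together with the exclusion of the ``doubled-type'' configurations of Remark~\ref{rema:arithplane}) and either recognize a forbidden pattern or locate a suitable $Y\in\LBrun$. This is exactly where the hypothesis that the three patterns do not occur in $P$ is used, and this exhaustive but bounded enumeration — kept finite and tractable by the arithmetic-type restrictions of Remark~\ref{rema:arithplane} — is the part delegated to computer verification.
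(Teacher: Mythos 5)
Your overall route is the paper's: the first clause is handled exactly as in the paper (Proposition~\ref{prop:coverprop} plus a finite check that $\SBrun_i(Q)$ is $\LBrun$-covered for each $Q\in\LBrun$), and for the second clause the paper likewise reduces to a finite enumeration, organized by the twelve translation classes of edge-connected two-face patterns $X$ together with their possible preimage configurations: six classes already lie in $\LBrun$ and are trivial, three are shown to admit only a pair of \emph{distinct} preimage faces whose union is, for each $i$ and up to translation, one specific connected two-face pattern, and the remaining three are the ``doubled'' patterns of Remark~\ref{rema:arithplane}, which is exactly where the hypothesis on $P$ is used. Your placement of that hypothesis (discarding preimage configurations containing a forbidden pattern) is compatible with this.

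The one step that does not hold as you state it is the glue you invoke in the case $h_f\neq h_g$: you argue that, because $f$ and $g$ are edge-adjacent, $\LBrun$-coveredness of $P$ yields a \emph{short} chain of elements of $\LBrun$ linking $h_f$ to $h_g$, thereby ``forcing'' additional faces of $P$ and finitely many candidate subpatterns $Z$. The definition of $\mcL$-covering gives no control whatsoever on the length or geometry of the connecting chain (it may be long and wander far from $h_f$ and $h_g$), so no local configuration of $P$ is forced this way; in fact the covering hypothesis on $P$ plays no role at all in the second clause. Fortunately the step is also unnecessary: since $f\in\SBrun_i(h_f)$ and $g\in\SBrun_i(h_g)$ with $h_f,h_g\in P$, the pattern $Z=h_f\cup g_f$-type union, namely $h_f\cup h_g$, is automatically a subpattern of $P$ with $X\subseteq\SBrun_i(Z)$, and the enumeration of admissible pairs shows that for each of the three nontrivial classes of $X$ this pair is edge-connected, determined up to translation, and its image under $\SBrun_i$ is precisely the four-face element of $\LBrun$, which already contains $X$ --- no further faces of $P$ are needed. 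If you replace the chain argument by this direct enumeration of admissible preimage pairs (the finite computation you intended to delegate to the computer anyway), your proof coincides with the paper's.
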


Observe that the patterns given in the statement of Proposition~\ref{prop:cov_Brun}
cannot occur in a discrete plane with normal vector $\bfv$ for which $0 < \bfv_1 < \bfv_2 $
according to Remark~\ref{rema:arithplane},
which is the natural domain of definition for Brun algorithm.
The same holds for Proposition~\ref{prop:cov_JP} with Jacobi-Perron admissible discrete planes.

\begin{proof}
First, $\SBrun_i(P)$ is $\LBrun$-covered thanks to Proposition~\ref{prop:coverprop},
because $\SBrun_i(Q)$ is $\LBrun$-covered for every $Q \in \LBrun$,
as can be verified from the equalities below:
\begin{align*}
\SBrun_1(\LBrun) &= \Big \{\input{fig/covBrun1.tex} \Big \}\\
\SBrun_2(\LBrun) &= \Big \{\input{fig/covBrun2.tex} \Big \}\\
\SBrun_3(\LBrun) &= \Big \{\input{fig/covBrun3.tex} \Big \}.
\end{align*}
It remains to prove that $\smash{\SBrun_i(P)}$ is \emph{strongly} $\LBrun$-covered.
Let $X \subseteq \smash{\SBrun_i(P)}$ be an edge-connected two-face pattern.
If $X$ is a translate of one of the first $6$ patterns of $\LBrun$,
then the result trivially holds (take $Y = X$ in the definition of strong coverings).

If $X$ is a translate of $f \cup g = \input{fig/E12a.tex}$,
then there must exist $f_0, g_0 \in P$ with $ f_0 \neq g_0$ such that
$f \in \SBrun_i(f_0)$ and $g \in \SBrun_i(g_0)$.
Indeed, we can check that $X$ cannot be contained in the image of a single face,
by definition of the $\SBrun_i$.
Enumerating all the possible such preimages $f_0, g_0$ gives
$f_0 \cup g_0 = \input{fig/E12a.tex}$ (for $\SBrun_1$),
$f_0 \cup g_0 = \input{fig/E13a.tex}$ (for $\SBrun_2$), or
$f_0 \cup g_0 = \input{fig/E23a.tex}$ (for $\SBrun_3$).
For $i \in \{1,2,3\}$
we have $\SBrun_i(f_0 \cup g_0) = \input{fig/U.tex}$
so $X \subseteq \input{fig/U.tex} \subseteq \SBrun_i(P)$,
which enables us to take $Y = \input{fig/U.tex} \in \LBrun$.
The cases $X = \input{fig/E13a.tex}$ or \input{fig/E22a.tex}
can be settled exactly in the same way.
We have treated $9$ cases over $12$ possible cases for $X$,
so the proof is complete because we have ruled out the $3$ remaining patterns in the statement of the proposition.
\end{proof}

\begin{prop}[Strong $\LJP$-covering]
\label{prop:cov_JP}
Let $P$ be an $\LJP$-covered pattern avoiding the patterns
\input{fig/E11a.tex} and \input{fig/E22b.tex}\,.
Then $\SJP_{a,b}(P)$ is strongly $\LJP$-covered for all $0 \leq a \leq b$, with $b \neq 0$.
\end{prop}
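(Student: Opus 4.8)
The plan is to follow exactly the scheme of the proof of Proposition~\ref{prop:cov_Brun}, the only genuinely new difficulty being that $\SJP_{a,b}$ is not a single substitution but a family parametrized by $0 \leq a \leq b$ with $b \geq 1$, so that every verification must be carried out uniformly in $a$ and $b$. Recall that $\SJP_{a,b}$ sends $[\mathbf 0,1]^\star$ and $[\mathbf 0,2]^\star$ to the single faces $[a\bfe_1,2]^\star$ and $[b\bfe_1,3]^\star$, while it sends $[\mathbf 0,3]^\star$ to the ``strip'' $[\mathbf 0,1]^\star \cup \bigcup_{k=0}^{a-1}[k\bfe_1,2]^\star \cup \bigcup_{k=0}^{b-1}[k\bfe_1,3]^\star$, whose faces are aligned along $\bfe_1$. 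First I would compute $\SJP_{a,b}(Q)$ for each $Q \in \LJP$ and check that it is $\LJP$-covered; since the only elements of $\LJP$ whose image grows with $a$ and $b$ are those containing a type-$3$ face, the $\LJP$-covering of the image reduces to observing that two consecutive faces of such a strip always form (a translate of) a two-face subpattern of some element of $\LJP$, together with the finitely many ``end effects'' at $k=0$, $k=a-1$, $k=b-1$ and the gluing with the image of the neighbouring $\LJP$-pattern. By Proposition~\ref{prop:coverprop} this already yields that $\SJP_{a,b}(P)$ is $\LJP$-covered.

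It then remains to prove that $\SJP_{a,b}(P)$ is \emph{strongly} $\LJP$-covered, i.e.\ that every edge-connected two-face pattern $X \subseteq \SJP_{a,b}(P)$ is contained in some $Y \in \LJP$ with $Y \subseteq \SJP_{a,b}(P)$. If $X$ is a translate of one of the two-face patterns of $\LJP$ one takes $Y = X$. For each of the finitely many remaining translation classes of edge-connected two-face patterns $X$, I would argue as in Proposition~\ref{prop:cov_Brun}: using the explicit form of $\SJP_{a,b}$ one checks that such an $X$ cannot lie in the image of a single face, so there exist distinct $f_0, g_0 \in P$ with one face of $X$ in $\SJP_{a,b}(f_0)$ and the other in $\SJP_{a,b}(g_0)$; computing $\SJP_{a,b}^{-1}$ one enumerates the admissible pairs $f_0 \cup g_0$ (again a parametrized but uniformly describable list), and for each one exhibits a $Y \in \LJP$ with $X \subseteq Y \subseteq \SJP_{a,b}(f_0 \cup g_0) \subseteq \SJP_{a,b}(P)$. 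The pairs $f_0 \cup g_0$ for which no such $Y$ exists are exactly the two patterns excluded in the statement, which are precisely the translation classes that cannot occur in a Jacobi-Perron-admissible discrete plane (cf.\ Remark~\ref{rema:arithplane}); the hypothesis on $P$ rules them out, which finishes the proof.

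The main obstacle is making the above case analysis genuinely finite in spite of the infinitely many substitutions $\SJP_{a,b}$ and the potentially long lists of preimages. The resolution I have in mind is structural: since the images of type-$1$ and type-$2$ faces are single faces and the image of a type-$3$ face is a strip along $\bfe_1$ whose interior is combinatorially periodic, both the two-face patterns $X$ that can appear in $\SJP_{a,b}(P)$ and the preimage pairs $f_0 \cup g_0$ one must inspect fall into finitely many translation classes, with $a$ and $b$ entering only through the position along $\bfe_1$ and through the degenerate value $a=0$ (where the image of a type-$3$ face contains no type-$2$ face) and, more mildly, $a=b$. These degenerate cases must be treated separately, but they contribute only finitely many additional configurations, and the remaining bookkeeping is of the same routine nature as in the Brun case, so it can be delegated to the exhaustive enumeration of small patterns mentioned in the introduction.
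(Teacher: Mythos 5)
Your first step (proving $\LJP$-covering of the image via Proposition~\ref{prop:coverprop} by checking the images of the patterns of $\LJP$, with the strip structure of $\SJP_{a,b}([\mathbf 0,3]^\star)$ absorbing the dependence on $a,b$) matches the paper. The gap is in the strong-covering step: you transplant the Brun mechanism wholesale and assert that an edge-connected two-face pattern $X \subseteq \SJP_{a,b}(P)$ not belonging to $\LJP$ ``cannot lie in the image of a single face''. That is false for the Jacobi-Perron family: the image of a single type-$3$ face is the whole strip $[\mathbf 0,1]^\star \cup \bigcup_{k=0}^{a-1}[k\bfe_1,2]^\star \cup \bigcup_{k=0}^{b-1}[k\bfe_1,3]^\star$, and one of the three non-trivial classes of $X$ does occur inside such a strip --- the paper's own case analysis explicitly treats the alternative ``$X$ is in the image of a face of type~$3$''. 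Under your dichotomy (either $X\in\LJP$ up to translation, or the two faces of $X$ come from two distinct preimage faces) this configuration is simply not covered, so the enumeration of preimage pairs $f_0\cup g_0$ does not finish the proof. The omission is repairable --- when $X$ sits inside the image of one type-$3$ face, that same strip already contains a pattern $Y\in\LJP$ with $X\subseteq Y$, using $a\le b$ and $b\ne 0$ --- but as written the step fails.

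Note also that the paper does not argue via preimage pairs at all in the Jacobi-Perron case. For the non-trivial classes of $X$ it reasons directly from face types: a face of type~$1$ in an image can only come from a face of type~$3$, and since $b\neq 0$ the image of that type-$3$ face also contains a type-$3$ face at the same position, which forces a larger pattern of $\LJP$ around $X$ regardless of whether the two faces of $X$ have one or two preimage faces; the remaining class is settled by the single-face observation above together with $b\geq a$. This type-based argument sidesteps the uniform-in-$(a,b)$ bookkeeping that your preimage enumeration would require (and which is exactly where your single-face claim breaks down); the two patterns excluded in the statement then account for the last two classes, as in the Brun proof.
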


\begin{proof}
First we must prove that $\smash{\SJP_{a,b}(P)}$ is $\LJP$-covered.
Thanks to Proposition~\ref{prop:coverprop}
it is enough to prove that $\smash{\SJP_{a,b}(Q)}$ is $\LJP$-covered for every $Q \in \LJP$.
Suppose that $Q = \input{fig/E33a.tex}$.
The pattern $\smash{\SJP_{a,b}(Q)}$ is of the form
\[
\input{fig/JPruleE33a.tex}
\]
(in this example $a = 3$ and $b = 5$).
It is then easy to check that if $a = 0$, $\smash{\SJP_{a,b}(Q)}$ can be $\LJP$-covered using the patterns
\input{fig/E33a.tex}\,, \input{fig/E33b.tex} and \input{fig/E11aJP.tex}\,,
and if $a \neq 0$, $\smash{\SJP_{a,b}(Q)}$ can be $\LJP$-covered using the patterns
\input{fig/E13a.tex}\,, \input{fig/E23a.tex}\,, \input{fig/E12b.tex}\,, \input{fig/E33a.tex} and \input{fig/E33b.tex}\,.
A similar reasoning can be carried out for each of the other $9$ patterns $Q \in \LJP$,
which proves that $P$ is $\LJP$-covered.

It remains to prove the \emph{strong} $\LJP$-covering.
Let $\smash{X \subseteq \SJP_{a,b}(P)}$ be an edge-connected two-face pattern.
If $X$ is a translate of one of the first $7$ patterns of $\LJP$,
then the result is obvious, because $X$ is itself in $\smash{\LJP}$.
If $X$ is a translate of \input{fig/E11b.tex},
then we must have
$X = \input{fig/E11b.tex} \subseteq \input{fig/E11bJP.tex} \subseteq \smash{\SJP_{a,b}(P)}$,
because in an image by $\smash{\SJP_{a,b}}$, a face of type $1$ must come from a face of type $3$,
so there must also be a face of type $3$ at the same position (because $b \neq 0$).
If $X$ is a translate of \input{fig/E12a.tex},
a similar reasoning yields
$X = \input{fig/E12a.tex} \subseteq \input{fig/U.tex} \subseteq \SJP_{a,b}(P)$.
If $X$ is a translate of \input{fig/E22a.tex},
then $X$ is in the image of a face of type $3$,
or has one face in the image of a face of type $2$,
and the other face in the image of a face of type $3$.
In both cases, we must have
$X = \input{fig/E22a.tex} \subseteq \input{fig/E22aJP.tex} \subseteq \SJP_{a,b}(P)$
(since $b \geq a$).

We have handled $10$ cases over $12$ possible cases for $X$,
and the two remaining patterns have been ruled out in the statement of the proposition.
\end{proof}

\subsection{Property~A}
\label{subsec:propA}

\begin{prop}[Property~A for Brun]
\label{prop:propA_Brun}
Property~A holds for Brun substitutions with $\LBrun$,
when restricted to planes $\Gv$ with $ 0 < \bfv_1 < \bfv_2 < \bfv_3$.
\end{prop}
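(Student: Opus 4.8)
The plan is to verify Property~A directly from Definition~\ref{defi:propA} by a finite case analysis, using the arithmetic restriction $0<\bfv_1<\bfv_2<\bfv_3$ throughout to keep the number of cases manageable (cf.\ Remark~\ref{rema:arith_assump}). By Definition~\ref{defi:propA} we must consider every connected two-face pattern $f\cup g$ occurring in a discrete plane $\Gv$ with $0<\bfv_1<\bfv_2<\bfv_3$, every disconnected two-face pattern $f_0\cup g_0$ with $f\in\SBrun_i(f_0)$ and $g\in\SBrun_i(g_0)$ for some $i\in\{1,2,3\}$, and show that no pattern $P\ni f_0$ and no $\LBrun$-annulus $A$ of $P$ (both inside a common admissible plane $\Gamma$) can satisfy $g_0\in\Gamma\setminus(A\cup P)$.

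First I would enumerate, up to translation, the connected two-face patterns $f\cup g$ that can occur in such a plane $\Gv$; there are only finitely many, and Remark~\ref{rema:arithplane} discards several of them (the two vertical stacks of type-$1$ and of type-$2$ faces, and the pair of type-$1$ faces sharing a vertical edge). For each surviving $f\cup g$ and each $i$, I would compute, from the explicit description of $\SBrun_i$ given in Section~\ref{subsec:Brun} and the preimage convention of the paragraph ``Preimages by dual substitutions'', all pairs of \emph{distinct} faces $f_0,g_0$ with $f\in\SBrun_i(f_0)$ and $g\in\SBrun_i(g_0)$, discarding those for which $f_0\cup g_0$ is edge-connected or lies in the image of a single face. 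Since each $\SBrun_i([\mathbf 0,j]^\star)$ consists of at most two faces and the matrices $\bfM_{\sBrun_i}$ have bounded entries, this is a bounded computation: by the second statement of Proposition~\ref{prop:imgplane} the vectors of $f_0$ and $g_0$ differ by a small bounded vector, so only a short list of triples $(f\cup g,\,i,\,f_0\cup g_0)$ survives, and in each of them $f_0$ and $g_0$ are disjoint but sit at combinatorial distance two in $\Gamma$, joined by an edge-connected path $f_0,h,g_0$ whose shape is pinned down by Remark~\ref{rema:arithplane}.

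It then remains, for each surviving disconnected pair $f_0\cup g_0$ in an admissible plane $\Gamma$, to rule out a separating $\LBrun$-annulus. Suppose $P\ni f_0$ and an $\LBrun$-annulus $A$ of $P$ existed in $\Gamma$ with $g_0\in\Gamma\setminus(A\cup P)$. Since $P\cup A$ is simply connected, $A$ is strongly $\LBrun$-covered, and $P\cap\partial(P\cup A)=\varnothing$, the pattern $A$ forms a ``collar'' that completely surrounds $P$: every edge-connected path in $\Gamma$ from a face of $P$ to a face of $\Gamma\setminus(A\cup P)$ must enter $A$, and $P$ may not touch $\partial(P\cup A)$. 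Feeding the short path $f_0,h,g_0$ through these two requirements, one checks in each of the finitely many surviving configurations that such a collar cannot be threaded between $f_0$ and $g_0$: doing so would force $A$ either to contain $g_0$ itself (or to force $g_0\in P$), or to contain a translate of one of the patterns forbidden in admissible Brun planes by Remark~\ref{rema:arithplane}, or to violate strong $\LBrun$-covering exactly as in Example~\ref{exam:badannulus}. Hence $g_0\in A\cup P$, contradicting the hypothesis, and Property~A holds.

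I expect this last step to be the main obstacle: it is the only part that genuinely exploits the interplay between strong $\LBrun$-covering, the simple-connectivity of $P\cup A$, and the arithmetic restriction $0<\bfv_1<\bfv_2<\bfv_3$, rather than a mechanical preimage computation, and it is precisely the part that is discharged by the exhaustive enumeration carried out with the Sage code accompanying the paper. The Jacobi-Perron analogue would then be proved along the same lines, replacing $\LBrun$ by $\LJP$ and Brun admissible planes by Jacobi-Perron admissible planes.
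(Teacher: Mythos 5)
Your proposal follows essentially the same route as the paper's proof: a finite enumeration of the (nine) disconnected preimage pairs $f_0\cup g_0$ of connected two-face patterns under $\SBrun_1,\SBrun_2,\SBrun_3$, followed, for each pair, by the observation that the faces filling the gap between $f_0$ and $g_0$ are forced into the annulus $A$ and then form a two-face pattern that is either forbidden in admissible planes by Remark~\ref{rema:arithplane} or incompatible with strong $\LBrun$-covering (the only member of $\LBrun$ containing it would overlap $f_0\cup g_0$, which lies outside $A$). The only slight imprecision is that in the configurations the paper exhibits the gap is filled by two faces rather than a single face $h$ at combinatorial distance two, but this does not affect the argument.
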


\begin{proof}
There are finitely many two-face connected patterns $f \cup g$,
so we can enumerate all the faces $f,g,f_0,g_0$
that satisfy $f \in \Sigma(f_0)$, $g \in \Sigma(g_0)$,
$f \cup g$ is connected and $f_0 \cup g_0$ is disconnected (see Definition~\ref{defi:propA}),
for $\Sigma=\SBrun_1$, $\SBrun_2$ and $\SBrun_3$.
It turns out that there are $9$ such possibilities,
where the corresponding values for $f_0 \cup g_0$
are shown in the table below.
\input{fig/Brun_disco_preim.tex}
Let us treat the case $f_0 \cup g_0 = [\mathbf 0, 2]^\star \cup [(1,-1,0), 2]^\star$.
Suppose that there exist a pattern $P$ and an $\LBrun$-annulus $A$ of $P$
that is included in a discrete plane such that $f_0 \in P$ and $g_0 \notin A$.
Because $A$ is an annulus of $P$,
every extension of $f_0 \cup g_0$ within a discrete plane must be of the form
\input{fig/fugBruncompl1.tex}
or \input{fig/fugBruncompl2.tex}\,,
where $f_0 \cup g_0$ is shown in light gray and the dark gray faces are in $A$.

Now, an occurrence of \input{fig/E11a_dark.tex}\,
is forbidden, since we are restricted to discrete planes $\Gv$
with $0 < \bfv_1 <\bfv_2 < \bfv_3$ (see Remark~\ref{rema:arithplane}).
An occurrence of \input{fig/E13a_dark.tex} also cannot happen,
because $A$ is strongly $\LBrun$-covered.
Indeed, $\input{fig/E13a_dark.tex}\, \subseteq A$,
so there must exist a translate of a pattern of $\LBrun$
that is included in $A$ and that contains $\input{fig/E13a_dark.tex}$\,.
The only such pattern in $\LBrun$ is $\input{fig/U.tex}$
(note that $\input{fig/E13a.tex}\, \notin \LBrun$).
This is impossible: otherwise $\input{fig/U.tex}$ and $f_0 \cup g_0$ overlap,
which is a contradiction because $f_0, g_0 \notin A$ and $\input{fig/U.tex}\, \subseteq A$.
The same reasoning applies to the eight other cases.
\end{proof}

In order to prove Property~A for Jacobi-Perron substitutions (Proposition~\ref{prop:propA_JP} below),
we first need to prove Lemma~\ref{lemm:3pattJP} below,
which describes all the possible disconnected preimages by $\SJP_{a,b}$ of a two-face connected pattern.
A striking fact is that there are only three possible such preimages,
despite the fact that $a$ and $b$ can take infinitely many values.

\begin{lemm}
\label{lemm:3pattJP}
Let $f$ and $g$ be two faces such that
\begin{enumerate}
\item $f \cup g$ is included in a discrete plane $\Gv$ with $0 < \bfv_1 < \bfv_3$ and $0< \bfv_2 < \bfv_3$;
\label{cond:plane}
\item $f \cup g$ is disconnected;
\label{cond:disco}
\item $\SJP_{a,b}(f \cup g)$ is connected for some $0 \leq a \leq b, b \neq 0$.
\label{cond:conn}
\end{enumerate}
Then $f \cup g$ is a translate of one of the three patterns
$P_1
    %= \myvcenter{\includegraphics[scale=0.8]{}}
    = [\mathbf 0, 3]^\star \cup [(1, 0, -1), 3]^\star$,
$P_2
    %= \myvcenter{\includegraphics[scale=0.8]{}}
    = [\mathbf 0, 3]^\star \cup [(1, -1, -1), 3]^\star$,
or $P_3
    %= \myvcenter{\includegraphics[scale=0.8]{}}
    = [\mathbf 0, 3]^\star \cup [(-1, 1, -1), 3]^\star$.
\end{lemm}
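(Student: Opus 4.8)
The plan is to use the explicit form of $\SJP_{a,b}$ together with the arithmetic restrictions on Jacobi--Perron admissible planes to reduce the statement to a finite verification that does not depend on $a$ and $b$.

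First, write $f=[\bfx_f,i_f]^\star$ and $g=[\bfx_g,i_g]^\star$. Since $\sJP_{a,b}$ is unimodular and $f\ne g$ lie in the common discrete plane $\Gv$, Proposition~\ref{prop:imgplane}(4) shows that $\SJP_{a,b}(f)$ and $\SJP_{a,b}(g)$ share no face; as each of them is a non-empty edge-connected pattern (a single face if $i\in\{1,2\}$, and the ``strip'' $\SJP_{a,b}([\bfx,3]^\star)$, which is readily seen to be edge-connected, if $i=3$), while their union $\SJP_{a,b}(f\cup g)$ is edge-connected by hypothesis~\ref{cond:conn}, there must be a face $F$ of $\SJP_{a,b}(f)$ and a face $G$ of $\SJP_{a,b}(g)$ sharing an edge. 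From Proposition~\ref{prop:imgplane}(2) and the definition of $\SJP_{a,b}$, every face of $\SJP_{a,b}([\bfx,i]^\star)$ has the form $[\Msinv\bfx+k\bfe_1,j]^\star$ with $(j,k)=(2,a)$ if $i=1$, $(j,k)=(3,b)$ if $i=2$, and $(j,k)\in\{(1,0)\}\cup\{(2,k):0\le k\le a-1\}\cup\{(3,k):0\le k\le b-1\}$ if $i=3$; so I would write $F=[\Msinv\bfx_f+k_f\bfe_1,j_f]^\star$ and $G=[\Msinv\bfx_g+k_g\bfe_1,j_g]^\star$ accordingly.

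Now comes the key boundedness step. Edge-adjacency of $F$ and $G$ means that their vectors differ by some $\bfu=u_1\bfe_1+u_2\bfe_2+u_3\bfe_3$ lying in the finite set of relative positions of two unit faces of types $j_f$ and $j_g$ sharing an edge; in particular $|u_1|,|u_2|,|u_3|$ are bounded by an absolute constant. Hence $\Msinv(\bfx_f-\bfx_g)=(k_g-k_f)\bfe_1+\bfu$, and applying $\Ms$, with $\Ms\bfe_1=\bfe_3$, $\Ms\bfe_2=\bfe_1+a\bfe_3$ and $\Ms\bfe_3=\bfe_2+b\bfe_3$, gives
\[
\bfx_f-\bfx_g \;=\; u_2\bfe_1+u_3\bfe_2+\bigl(k_g-k_f+u_1+au_2+bu_3\bigr)\bfe_3 .
\]
Thus the first two coordinates of $\bfx_f-\bfx_g$ are $u_2,u_3$, bounded independently of $a,b$. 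For the third coordinate I would use hypothesis~\ref{cond:plane}: since $f,g\subseteq\Gv$ we have $\langle\bfx_f,\bfv\rangle\in[0,\bfv_{i_f})$ and $\langle\bfx_g,\bfv\rangle\in[0,\bfv_{i_g})$, so $|\langle\bfx_f-\bfx_g,\bfv\rangle|<\max(\bfv_{i_f},\bfv_{i_g})\le\bfv_3$; writing $\bfx_f-\bfx_g=u_2\bfe_1+u_3\bfe_2+r\bfe_3$ and using $\bfv_1,\bfv_2<\bfv_3$ forces $|r|\,\bfv_3\le|\langle\bfx_f-\bfx_g,\bfv\rangle|+|u_2|\bfv_1+|u_3|\bfv_2<(1+|u_2|+|u_3|)\bfv_3$, so $r$ is bounded by an absolute constant as well. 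Therefore $\bfx_f-\bfx_g$ ranges over a finite set independent of $a$ and $b$, and together with the finitely many pairs of types $(i_f,i_g)$ this leaves only finitely many candidates for $f\cup g$, up to translation.

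It then remains to run through this finite list and keep only the patterns satisfying all the hypotheses: being disconnected (hypothesis~\ref{cond:disco}), fitting inside an admissible plane $\Gv$ with $0<\bfv_1<\bfv_3$ and $0<\bfv_2<\bfv_3$ --- which by Remark~\ref{rema:arithplane} already discards many candidates --- and having an edge-connected image under $\SJP_{a,b}$ for at least one admissible pair $(a,b)$. Carrying out this enumeration, which is of the same nature as the other exhaustive checks of this section and is best delegated to the computer, one finds that the surviving patterns are exactly $P_1$, $P_2$ and $P_3$ (in particular both faces always turn out to be of type $3$), which is the claim. I expect the main difficulty to be purely organizational: making the enumeration of the edge-adjacency offsets $\bfu$, of the possible pairs $(j_f,k_f),(j_g,k_g)$ inside the strips, and hence of the candidate patterns, genuinely exhaustive, so that the final list of three patterns is provably complete.
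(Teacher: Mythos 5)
Your proof is correct and follows essentially the same route as the paper's: both reduce condition (3) to the relation $\Msinv(\bfx_f-\bfx_g)=(k_g-k_f)\bfe_1+\mathbf u$ with $\mathbf u$ ranging over the finite edge-adjacency offset sets, and both then use the height inequalities $0\le\langle\bfx,\bfv\rangle<\bfv_i$ together with $\bfv_1,\bfv_2<\bfv_3$ to bound the remaining degree of freedom (the paper parametrizes the solutions by $s,t$ and bounds them via a table of inequalities, while you bound the coordinates of $\bfx_f-\bfx_g$ directly), leaving only a short case analysis. The one small imprecision is that your final filter, namely that \emph{some} admissible $(a,b)$ makes the image connected, is not literally a finite computer enumeration since $a,b$ are unbounded; but for each of your finitely many candidates it collapses to the same elementary linear conditions in $a$, $b$ and the shift indices that the paper tabulates, so nothing essential is missing.
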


\begin{proof}
We first need the following easy preliminary fact (Equation~(\ref{eq:disco}) below),
which can be checked by inspection of finitely many cases.
A two-face pattern $P = [\bfx, i]^\star \cup [\bfy, j]^\star$ is edge-connected if and only $\bfy - \bfx \in V_{ij}$, where
\begin{equation}
\label{eq:disco}
\begin{split}
V_{11} &= \{\pm (0,1,0), \pm (0,0,1), \pm (0,1,-1), \pm (0,1,1)\}, \\
V_{22} &= \{\pm (1,0,0), \pm (0,0,1), \pm (1,0,-1), \pm (1,0,1)\}, \\
V_{33} &= \{\pm (1,0,0), \pm (0,1,0), \pm (1,-1,0), \pm (1,1,0)\}, \\
V_{12} &= \{(0,0,0), (-1,1,0), \pm (0,0,1), (0,1,-1), (-1,1,-1), (-1,1,1), (-1,0,1)\}, \\
V_{13} &= \{(0,0,0), (-1,0,1), \pm (0,1,0), (-1,1,0), (-1,1,1), (-1,-1,1), (0,-1,1)\}, \\
V_{23} &= \{(0,0,0), (0,-1,1), \pm (1,0,0), (-1,0,1), (-1,-1,1), (1,-1,1), (1,-1,0)\}, \\
V_{21} &= -V_{12}, \quad V_{31} = -V_{13}, \quad V_{32} = -V_{22}.
\end{split}
\end{equation}
In contrast with the Brun substitutions,
we cannot prove Lemma~\ref{lemm:3pattJP} by enumerating all the possibilities
because $a$ and $b$ can take infinitely many values.
However, the problem can be reduced to solving a simple family of linear equations,
which then allows a systematic study of the disconnected preimages of $f \cup g$.

We will show that if Condition~\ref{cond:conn} holds,
then Condition~\ref{cond:plane} or~\ref{cond:disco} fails,
unless $f \cup g$ is a translate of $P_1$, $P_2$ or $P_3$.
We will only treat the case where the types of $f$ and $g$ are both equal to $3$.
This corresponds to the most complicated case;
the remaining cases can be handled analogously.
Recall that $\smash{\bfM_{a,b} = \transp \bfM_{\sJP_{a,b}}}$.
Let $f = [\bfx, 3]^\star$ and $g = [\bfy, 3]^\star$ be two faces contained in a same discrete plane,
and assume that Condition~\ref{cond:conn} holds.
According to Equation (\ref{eq:disco}),
the fact that $\SJP_{a,b}(f \cup g)$ is connected implies one of the following six relations:
\[
\begin{array}{ccccl}
\transp\bfM_{a,b}^{-1}(\bfy - \bfx) &   &   & \in & V_{11}, \\
\transp\bfM_{a,b}^{-1}(\bfy - \bfx) & + & (k, 0, 0) & \in & V_{12} \qquad (0 \leq k \leq a-1), \\
\transp\bfM_{a,b}^{-1}(\bfy - \bfx) & + & (\ell, 0, 0) & \in & V_{13} \qquad (0 \leq \ell \leq b-1), \\
\transp\bfM_{a,b}^{-1}(\bfy - \bfx) & + & (k - k', 0, 0) & \in & V_{22} \qquad (0 \leq k, k' \leq a-1), \\
\transp\bfM_{a,b}^{-1}(\bfy - \bfx) & + & (\ell - k, 0, 0) & \in & V_{23} \qquad (0 \leq k \leq a-1, \ 0 \leq \ell \leq b-1), \\
\transp\bfM_{a,b}^{-1}(\bfy - \bfx) & + & (\ell - \ell', 0, 0) & \in & V_{33} \qquad (0 \leq \ell, \ell' \leq b-1),
\end{array}
\]
for some $0 \leq a \leq b$ with $b \neq 0$.
This is equivalent to
\[
\bfy-\bfx \ \in \
\left\{
\svect{0}{0}{s},
\pm\svect{1}{0}{s},
\svect{-1}{1}{s},
\pm\svect{0}{1}{t},
\pm\svect{1}{1}{t},
\svect{1}{-1}{-t}
:
s \in \bbZ, t \geq 0
\right\}.
\]
This either contradicts Condition~\ref{cond:plane} or~\ref{cond:disco},
or else implies that $f \cup g$ is equal to $P_1$, $P_2$ or $P_3$.
Indeed, let $\Gv$ be a discrete plane that contains both $f$ and $g$.
Thanks to Remark~\ref{rema:arithplane},
we can restrict the possible values of $\bfx - \bfy$ even further.
The remaining valid solutions are shown in the following table,
where $s \in \bbZ$, $t \geq 0$, $0 < \bfv_1 < \bfv_3$ and $0 < \bfv_2 < \bfv_3$.
We then observe that either $\bfy - \bfx \in V_{33}$
(so Condition~\ref{cond:disco} is violated because $f \cup g$ is connected thanks to Equation (\ref{eq:disco})),
or that $f \cup g$ is equal to $P_1$, $P_2$ or $P_3$.
\[
\begin{array}{ll}
\text{Inequalities} & \text{Solutions} \\
\hline
-\bfv_3 < s\bfv_3 < \bfv_3 &
s=0\!: \bfy - \bfx \in V_{33} \\
-\bfv_3 < \bfv_1 + s\bfv_3 < \bfv_3 &
s=0\!: \bfy - \bfx \in V_{33}; \ s=1\!: P_1 \\
-\bfv_3 < -\bfv_1 + \bfv_2 + s\bfv_3 < \bfv_3 &
s=0\!: \bfy - \bfx \in V_{33}; \ s =-1\!: P_3; \ s=1\!: P_2 \\
-\bfv_3 < \bfv_2 + t\bfv_3 < \bfv_3 &
t=0\!: \bfy - \bfx \in V_{33} \\
-\bfv_3 < \bfv_1 + \bfv_2 + t\bfv_3 < \bfv_3 &
t=0\!: \bfy - \bfx \in V_{33} \\
-\bfv_3 < \bfv_1 - \bfv_2 - t\bfv_3 < \bfv_3 &
t=0\!: \bfy - \bfx \in V_{33}; \ t=1\!: P_3
\end{array}
\]
\vspace{-9mm} \\ % "bugfix"
\end{proof}

\begin{prop}[Property~A for Jacobi-Perron]
\label{prop:propA_JP}
Property~A holds for Jacobi-Perron substitutions with $\LJP$,
when restricted to discrete planes $\Gv$ with $0 < \bfv_1 < \bfv_3$ and $0 < \bfv_2 < \bfv_3$.
\end{prop}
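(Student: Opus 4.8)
The argument runs in close parallel to the proof of Proposition~\ref{prop:propA_Brun}, the only essential difference being that the explicit enumeration of disconnected preimages is replaced by the classification provided by Lemma~\ref{lemm:3pattJP}.

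First I would reduce the verification to three cases. By Definition~\ref{defi:propA}, checking Property~A for $\SJP_{a,b}$ with respect to $\LJP$ on the family of planes $\Gv$ with $0 < \bfv_1 < \bfv_3$ and $0 < \bfv_2 < \bfv_3$ amounts to showing: for every connected two-face pattern $f \cup g$ of such a plane and every disconnected two-face pattern $f_0 \cup g_0$ with $f \in \SJP_{a,b}(f_0)$ and $g \in \SJP_{a,b}(g_0)$, there is no pattern $P$ with an $\LJP$-annulus $A$ of $P$ (both contained in a common discrete plane $\Gamma$ of the family) such that $f_0 \in P$ and $g_0 \in \Gamma \setminus (A \cup P)$. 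By Lemma~\ref{lemm:3pattJP}, such a pattern $f_0 \cup g_0$ is a translate of $P_1 = [\mathbf 0,3]^\star \cup [(1,0,-1),3]^\star$, $P_2 = [\mathbf 0,3]^\star \cup [(1,-1,-1),3]^\star$ or $P_3 = [\mathbf 0,3]^\star \cup [(-1,1,-1),3]^\star$; in particular $f_0$ and $g_0$ are always of type $3$. So it suffices to rule out, for each of these three patterns $f_0 \cup g_0$, the existence of such a pair $(P,A)$. Note that although this is fewer preimage patterns than in the Brun case, each one is a pair of type-$3$ faces, which is the most involved configuration (as already noted in the proof of Proposition~\ref{prop:propA_Brun} and in Lemma~\ref{lemm:3pattJP}).

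Second, for each of $P_1$, $P_2$, $P_3$ I would argue by contradiction exactly as in the Brun case. Suppose $P$ and an $\LJP$-annulus $A$ of $P$ exist with $f_0 \in P$ and $g_0 \in \Gamma \setminus (A \cup P)$. The annulus condition $P \cap \partial(P \cup A) = \varnothing$ forces $A$ to surround $P$ completely: every face of $\Gamma$ sharing an edge with a face of $P$ lies in $P \cup A$. Combined with the arithmetic restrictions $0 < \bfv_1 < \bfv_3$, $0 < \bfv_2 < \bfv_3$ and the forbidden-pattern list of Remark~\ref{rema:arithplane}, this constrains the local shape of $\Gamma$ around $f_0 \cup g_0$ to one of finitely many explicit completions, in each of which a specific nonempty set of faces separating $f_0$ from $g_0$ is forced to lie in $A$. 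Enumerating these completions is a finite computation, of the same nature as the two displayed completions appearing in the proof of Proposition~\ref{prop:propA_Brun}.

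Third, in each completion the contradiction is obtained as in the Brun case: either the completion already contains a two-face pattern that cannot occur in an admissible discrete plane by Remark~\ref{rema:arithplane} (such as a translate of $[\mathbf 0,1]^\star \cup [(0,1,0),1]^\star$ or of $[\mathbf 0,2]^\star \cup [(0,0,1),2]^\star$), which is impossible; or else we use that $A$ is \emph{strongly} $\LJP$-covered: the faces of $A$ separating $f_0$ from $g_0$ contain an edge-connected two-face sub-pattern $X \subseteq A$, so there is a pattern $Y \in \LJP$ with $X \subseteq Y \subseteq A$, and an inspection of the patterns of $\LJP$ shows that every such $Y$ must contain one of the faces $f_0$, $g_0$, contradicting $f_0, g_0 \notin A$ and $X \subseteq A$. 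The hard part will be the case analysis for $P_2$ and $P_3$: since their difference vectors $(1,-1,-1)$ and $(-1,1,-1)$ involve all three coordinate directions, there are more admissible local completions than for $P_1$, and one must check that the strong-covering step still pins $Y$ down tightly enough to force the overlap — in particular the four-face element of $\LJP$ has to be taken into account. Organising the enumeration by the direction in which the annulus must pass between the two type-$3$ faces $f_0$ and $g_0$, and systematically discarding impossible configurations via Remark~\ref{rema:arithplane}, keeps the number of cases small; this verification is of the kind carried out by computer elsewhere in Section~\ref{sect:tech_proofs}.
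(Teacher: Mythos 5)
Your proposal follows essentially the same route as the paper's proof: reduce to the three preimage patterns via Lemma~\ref{lemm:3pattJP}, enumerate the admissible local completions of $f_0\cup g_0$ forced by the annulus condition and Remark~\ref{rema:arithplane}, and derive the contradiction from the strong $\LJP$-covering of $A$ together with the specific patterns in $\LJP$, exactly as in the Brun case. The argument is correct; the paper simply displays the four resulting completions explicitly rather than describing the enumeration.
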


\begin{proof}
Thanks to Lemma~\ref{lemm:3pattJP},
if $f,g,f_0,g_0$ satisfy the conditions required in Definition~\ref{defi:propA} for some Jacobi-Perron substitution,
then $f_0 \cup g_0$ must be equal to a translate of one of the three patterns
$P_1$, $P_2$, $P_3$ of the statement of Lemma~\ref{lemm:3pattJP}.
%\[
%P_1 \ = \ \myvcenter{\includegraphics{fig/fug1.pdf}} \qquad
%P_2 \ = \ \myvcenter{\includegraphics{fig/fug2.pdf}} \qquad
%P_3 \ = \ \myvcenter{\includegraphics{fig/fug3.pdf}}\,.
%\]

Suppose that there exists a pattern $P$ and an $\LJP$-annulus $A$ of $P$
that is included in a discrete plane such that $f_0 \in P$ and $g_0 \notin A \cup P$.
Because $A$ is an annulus of $P$,
every extension of $f_0 \cup g_0$ within a discrete plane must be of the form
\input{fig/fug1compl.tex}\,,
\input{fig/fug1compl2.tex}\,,
\input{fig/fug2compl.tex}\,,
or \input{fig/fug3compl.tex}\,,
where $f_0 \cup g_0$ is shown in light gray and the dark gray faces are in $A$.
Now, similarly as in the proof of Proposition~\ref{prop:propA_Brun},
a contradiction must occur in each case,
thanks to the strong $\LJP$-covering of $A$
and the precise choice of patterns in $\LJP$.
\end{proof}

\subsection{Generation graphs for the Brun substitutions}
\label{subsec:graph_Brun}

We now construct generation graphs for the substitutions $\SBrun_1, \SBrun_2, \SBrun_3$.
To construct the graphs below we use the filter $\FBrun$ equal to the set of all the faces $f$
that belong to a discrete plane $\Gv$ with $0 < \bfv_1 < \bfv_2 < \bfv_3$.

\paragraph{Graph $\GBrun$ for the seed $\mcU$}

Let $\GBrun$ be the generation graph (see Definition~\ref{defi:gengraph})
associated with $\SBrun_1, \SBrun_2, \SBrun_3$,
the filter $\FBrun$
and the initial set of faces $\mcX = \VBrun_1 \cup \VBrun_2$
(the union of the minimal annuli given in Proposition~\ref{prop:minannuli_Brun}).
Its computation stops after two iterations of the algorithm.
It has $19$ vertices and $47$ edges.

Now we also define $\GBrunmini$ as the subgraph of $\GBrun$
containing only the vertices which are not contained in $\mcU$
and which are at the end of an infinite backward path containing infinitely many $3$s.
The graph $\GBrunmini$ is plotted in Figure~\ref{fig:GBrunmini}.
It will be used in the proofs of Lemma~\ref{lemm:balls_Brun} and Theorem~\ref{theo:bad_Brun}
to characterize the sequences $(i_n)_{n \geq 1}$ for which
the patterns $\SBrun_{i_1} \ldots\SBrun_{i_n}(\mcU)$ generate an entire discrete plane.

\begin{figure}[!ht]
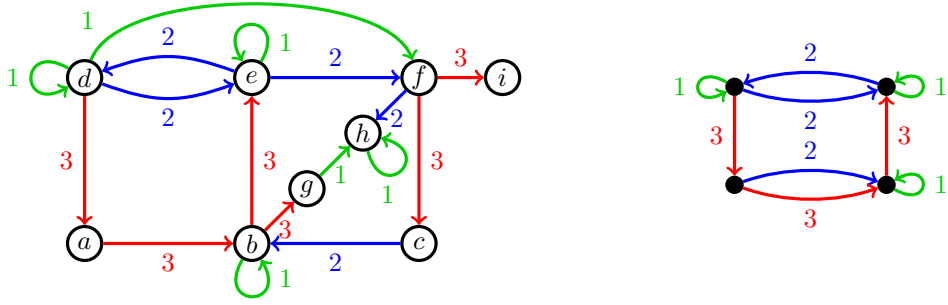

\centering
\myvcenter{\input{fig/graph_bad_Brun.tex}}
\hfil
\myvcenter{\input{fig/graph_bad_Brun_small.tex}}
\caption{
The graph~$\GBrunmini$ (left).
The faces corresponding to the vertices of the graph are
$a = [(1, 1, -1), 1]^\star$,
$b = [(1, -1, 1), 3]^\star$,
$c = [(1, 1, -1), 2]^\star$,
$d = [(-1, 1, 0), 2]^\star$,
$e = [(-1, 0, 1), 3]^\star$,
$f = [(-1, 1, 0), 3]^\star$,
$g = [(-1, 0, 1), 2]^\star$,
$h = [(-1, -1, 1), 3]^\star$,
$i = [(1, 1, -1), 3]^\star$.
The graph on the right is obtained from $\GBrunmini$ by merging some vertices.
It is a reduced version of $\GBrunmini$ in the sense
that the language of the edge labellings of its paths is equal to that of $\GBrunmini$.
}
\label{fig:GBrunmini}
\end{figure}

\paragraph{Graph $\HBrun$ for the seeds $\VBrun_i$}

We now construct another generation graph for the Brun substitutions,
with the seeds $\VBrun_1$ and $\VBrun_2$ instead of $\mcU$.
This time, \emph{every} Brun-admissible sequence
will be proved to generate an $\LBrun$-annulus around $\VBrun_1$ or $\VBrun_2$.
Before computing $\HBrun$ we first enumerate all the possible minimal
$\LBrun$-annuli of $\VBrun_1$ or $\VBrun_2$
(minimal in the sense that any $\LBrun$-annulus of $\VBrun_i$ must contain one of these patterns).
This can be done similarly as in the proof of Proposition~\ref{prop:minannuli_Brun},
and yields the following four possible minimal annuli.
\input{fig/big_minimal_annuli.tex}

Let $\HBrun$ be the generation graph (according to Definition~\ref{defi:gengraph})
constructed with substitutions $\SBrun_1, \SBrun_2, \SBrun_3$,
filter $\FBrun$
and initial set $\mcX = W_1 \cup W_2 \cup W_3 \cup W_4$ (a total of $60$ faces).
Its computation stops after six iterations of the algorithm.
It has $101$ vertices and $240$ edges.
We do not plot the graph below, but it can be easily recovered
from the Sage source code provided with this article.
The aim of computing $\HBrun$ is to prove the following lemma,
which will be used in the proof of Theorem~\ref{theo:seeds}.

\begin{lemm}
\label{lemm:finitegraph_Brun_H}
Let $V \in \{\VBrun_1, \VBrun_2\}$.
There exists $N$ such that for every $(i_1, \ldots, i_n) \in \{1,2,3\}^n$
containing more than $N$ occurrences of $3$,
the pattern $\SBrun_{i_1} \cdots \SBrun_{i_n}(V)$
contains an $\LBrun$-annulus of $\VBrun_1$ or $\VBrun_2$.
\end{lemm}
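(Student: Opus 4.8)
The plan is to obtain the lemma from Statement~\ref{graphprop1} of Proposition~\ref{prop:gengraph}, applied to the finite generation graph $\HBrun$. First I fix the data for that proposition: the substitutions $\SBrun_1,\SBrun_2,\SBrun_3$, the filter set $\mcF=\FBrun$, the family $\mcW=\{W_1,W_2,W_3,W_4\}$ of minimal $\LBrun$-annuli of $\VBrun_1$ or $\VBrun_2$, the initial set $\mcX=W_1\cup W_2\cup W_3\cup W_4=\bigcup_{W\in\mcW}W$, and $\mcV=\{\VBrun_1,\VBrun_2\}$. Because every discrete plane contains $\mcU$ at the origin, everything is anchored, so these fixed patterns make sense as the vertex data of $\HBrun$. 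I then check the hypotheses of Proposition~\ref{prop:gengraph}: the closure property of $\FBrun$ holds because the Brun algorithm is complete, as observed in the remark following Proposition~\ref{prop:gengraph}; the fact that every admissible plane $\Gv$ (with $0<\bfv_1<\bfv_2<\bfv_3$) contains one of the $W\in\mcW$ and one of the $V\in\mcV$, together with the uniqueness and realizability conditions on $\mcV$, follows from the enumeration of the surroundings of $\mcU$ and of $\VBrun_1,\VBrun_2$ performed exactly as in the proof of Proposition~\ref{prop:minannuli_Brun}.

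Granting this, Statement~\ref{graphprop1} of Proposition~\ref{prop:gengraph} reduces the lemma to a statement about the finite graph $\HBrun$ alone: it suffices to exhibit an integer $N$ such that for every word $(i_1,\dots,i_n)\in\{1,2,3\}^n$ containing more than $N$ letters equal to $3$, every path $f_n\overset{i_n}{\rightarrow}\cdots\overset{i_1}{\rightarrow}f_0$ of $\HBrun$ has its source $f_n$ in $\bigcup_{V\in\mcV}V$. Such an $N$ exists as soon as the following structural fact about $\HBrun$ holds: every cycle of $\HBrun$ carrying an edge labelled $3$ is contained in $\bigcup_{V\in\mcV}V$, and no vertex outside $\bigcup_{V\in\mcV}V$ admits a directed path to such a cycle. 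Indeed, if $f_n\notin\bigcup_{V\in\mcV}V$, then no $3$-labelled edge traversed by the path can lie on a cycle (otherwise that cycle, carrying a $3$, would be inside $\bigcup_{V\in\mcV}V$ and reachable from $f_n$, forcing $f_n\in\bigcup_{V\in\mcV}V$); hence each $3$-labelled edge is traversed at most once, and the path carries at most as many $3$'s as there are $3$-labelled edges in $\HBrun$. Taking $N$ equal to that number (bounded by the $240$ edges of $\HBrun$, the exact value being read off from the computation) then forces $f_n\in\bigcup_{V\in\mcV}V$ whenever the label word has more than $N$ threes, and Statement~\ref{graphprop1} of Proposition~\ref{prop:gengraph} yields a $W\in\mcW$ with $W\subseteq\SBrun_{i_1}\cdots\SBrun_{i_n}(V)$, which is an $\LBrun$-annulus of $\VBrun_1$ or $\VBrun_2$, as required. (Intuitively, only the $3$'s let the patterns grow, since $\sBrun_1,\sBrun_2$ are not Pisot, so one must feed in enough of them.)

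The main obstacle is therefore the verification of that structural fact about $\HBrun$, which I would carry out with the Sage implementation of dual substitutions: construct $\HBrun$ from $\mcX$ and $\FBrun$ along Definition~\ref{defi:gengraph} (the iteration stabilizing after six steps at $101$ vertices and $240$ edges), compute its strongly connected components, isolate those meeting an edge labelled $3$, and confirm that these components and their forward-predecessors consist only of faces belonging to $\VBrun_1$ or $\VBrun_2$ — in sharp contrast with the graph $\GBrunmini$ attached to the seed $\mcU$, where precisely this property fails and which is exploited in Theorem~\ref{theo:bad_Brun}. This step genuinely depends on the shape of the $101$-vertex graph and cannot be replaced by a short hand argument; the same computation also returns the optimal value of $N$. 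A secondary, routine point of care is the bookkeeping of the positions of the central copies of $\VBrun_1,\VBrun_2$ relative to the faces of the $W_j$ when applying Proposition~\ref{prop:gengraph}, which is also what will be needed when this lemma is used in the proof of Theorem~\ref{theo:seeds}.
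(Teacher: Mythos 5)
Your proposal is correct and follows essentially the same route as the paper: apply Statement~1 of Proposition~\ref{prop:gengraph} to the graph $\HBrun$ with $\mcV=\{\VBrun_1,\VBrun_2\}$, $\mcW=\{W_1,\dots,W_4\}$ and filter $\FBrun$, and reduce the existence of $N$ to a computational check on the finite graph forcing every path with many $3$'s to originate in $\VBrun_1\cup\VBrun_2$. Your formulation of that check (cycles carrying a $3$ lie in $\bigcup_{V\in\mcV}V$ \emph{and} no vertex outside $\bigcup_{V\in\mcV}V$ reaches such a cycle) merely makes explicit the reachability condition that the paper's phrase ``every strongly connected component containing an edge labelled by $3$ consists only of vertices in $\VBrun_1$ or $\VBrun_2$'' verifies implicitly in the actual computation.
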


\begin{proof}
Let $N \geq 1$ be such that for every path
$\smash{f_n \overset{i_n}{\rightarrow} f_{n-1} \overset{i_{n-1}}{\rightarrow} \cdots f_1 \overset{i_1}{\rightarrow} f_0}$
in $\HBrun$ with $i_n = 3$ more than $N$ times,
we have $f_n \in \VBrun_1 \cup \VBrun_2$.
The existence of such an $N$ can be proved computationally in the following way:
one checks that every strongly connected component containing an edge labelled by $3$
consists only of vertices in $\VBrun_1$ or $\VBrun_2$.
Any path containing sufficiently many $3$s must originate from such a component,
and hence from a face with vertex in $\VBrun_1$ or $\VBrun_2$.

The proposition now follows directly from Statement~\ref{graphprop1} of Proposition~\ref{prop:gengraph},
applied to a sequence $(i_1, \ldots, i_n)$ containing at least $N$ occurrences of $3$.
The hypotheses on $\FBrun$, $\mcV = \{\VBrun_1, \VBrun_2\}$, $\mcW = \{W_1, W_2, W_3, W_4\}$
needed in Proposition~\ref{prop:gengraph} can be checked easily.
In particular, the fact that $\Gv \subseteq \FBrun$ implies $\SBrun_i(\Gv) \subseteq \FBrun$ for all $i \in \{1,2,3\}$
follows directly from the definition of the Brun algorithm and from Proposition~\ref{prop:imgplane}.
\end{proof}

\subsection{Generation graphs for the Jacobi-Perron substitutions}
\label{subsec:graph_JP}

In this section we take the filter $\FJP$ to be the set of faces $f$
that belong to a discrete plane $\Gv$ such that and $0 < \bfv_1 < \bfv_3$ and $0 < \bfv_2 < \bfv_3$.

\paragraph{The graph $\GJP$ for the additive Jacobi-Perron substitutions}
Let $\GJP$ be the generation graph (see Definition~\ref{defi:gengraph}) associated with
the additive Jacobi-Perron substitutions $\Theta_1, \Theta_2, \Theta_3, \Theta_4$,
and the starting set $\mcX = \VJP_1 \cup \VJP_2 \cup \VJP_3 \cup \VJP_4$
(defined in Proposition~\ref{prop:minannuli_JP}).
Unlike the graph $\GBrun$, the graph $\GJP$ is infinite
(we never have $\GJP_n = \GJP_{n+1}$).
However, the structure of $\GJP$ is simple enough, so we will be able to describe it precisely below.

Let $\GJP_3$ be the graph obtained at the third step in the computation of $\GJP$, defined in Definition~\ref{defi:gengraph}.
This graph has $33$ vertices and $93$ edges.
In order to use $\GJP_3$ to study $\GJP$ we need the following lemma,
which can be proved easily by studying the preimages of
the additive Jacobi-Perron substitutions $\Theta_1, \Theta_2, \Theta_3$ and $\Theta_4$.

\begin{lemm}
\label{lemm:graphJP}
Define
$e_n = [(-n, 1, 0), 3]^\star$,
$f_n = [(-n, 1, 0), 1]^\star$,
$g_n = [(-n, -1, 1), 3]^\star$,
$h_n = [(-n, -1, 1), 1]^\star$
for $n \geq 0$.
For every integer $n \geq 1$, the only preimages of $e_n$, $f_n$, $g_n$, $h_n$
by one of the additive Jacobi-Perron substitutions $\Theta_1, \Theta_2, \Theta_3$ or $\Theta_4$
are given by:
\[
\begin{array}{llll}
e_n \in \Theta_1(e_{n+1}) & f_n \in \Theta_1(f_n)     & g_n \in \Theta_1(g_n)     & h_n \in \Theta_1(h_{n-1}) \\
e_n \in \Theta_1(f_n)     & f_n \in \Theta_2(h_{n-1}) & g_n \in \Theta_1(h_{n-1}) & h_n \in \Theta_2(h_n).    \\
e_n \in \Theta_2(e_n)     &                           & g_n \in \Theta_2(h_n)     &                           \\
e_n \in \Theta_2(f_{n-1}) &                           & g_n \in \Theta_2(h_{n+1}) &
\end{array}
\]
\end{lemm}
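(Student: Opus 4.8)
The plan is to prove Lemma~\ref{lemm:graphJP} by a direct, finite computation of preimages under the four additive dual substitutions $\Theta_1,\Theta_2,\Theta_3,\Theta_4$, exploiting the fact that the four families $e_n,f_n,g_n,h_n$ are indexed by a single integer parameter $n$ that is shifted in a controlled way by each $\Theta_i$. First I would recall that $\Theta_i = \EOS(\theta_i)$, where $\theta_1=\tau_2$, $\theta_2=\tau_1\tau_2$, $\theta_3=\tau_3\tau_2$, $\theta_4=\tau_3\tau_1\tau_2$, and that $\tau_1:1\mapsto1,2\mapsto21,3\mapsto3$, $\tau_2:1\mapsto1,2\mapsto2,3\mapsto31$, $\tau_3:1\mapsto3,2\mapsto1,3\mapsto2$. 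Using Proposition~\ref{prop:imgplane}(1) to compose duals, and the explicit formula for $\EOS$ applied to the three base faces, I would write down $\Theta_i([\mathbf 0,j]^\star)$ for $i\in\{1,2,3,4\}$ and $j\in\{1,2,3\}$ as explicit finite unions of faces. In particular I would extract the incidence matrices $\bfM_{\theta_i}$ and note $\Theta_i([\bfx,j]^\star)=\transp\bfM_{\theta_i}^{-1}\bfx+\Theta_i([\mathbf 0,j]^\star)$ by Proposition~\ref{prop:imgplane}(2).

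Next, for each target face $e_n=[(-n,1,0),3]^\star$, $f_n=[(-n,1,0),1]^\star$, $g_n=[(-n,-1,1),3]^\star$, $h_n=[(-n,-1,1),1]^\star$, I would determine by the abuse-of-notation preimage computation $\Theta_i^{-1}$ described at the end of Section~\ref{sect:tech_proofs}: a face $[\bfy,k]^\star$ is a preimage of $[\bfx,j]^\star$ under $\Theta_i$ iff $[\bfx,j]^\star\subseteq\Theta_i([\bfy,k]^\star)$, which by the translation formula amounts to checking that $\transp\bfM_{\theta_i}(\bfx-\bft)=\bfy$ for some translation vector $\bft$ occurring in the face-set of $\Theta_i([\mathbf 0,k]^\star)$ of type $j$. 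Since these face-sets are small and explicit, for each target and each $i$ this is a finite check: it produces the candidate $(\bfy,k)$, and one then reads off that $\bfy$ again has the form $(-m,1,0)$ or $(-m,-1,1)$ with $m=n$, $n\pm1$, and type $1$ or $3$ — i.e. again one of the four families — or else has a type/position that does not belong to the claimed list. The claim of the lemma is exactly the resulting table; the key structural point making this clean is that $\Theta_1,\Theta_2$ (which carry the ``$a$'' and ``$b$'' repetitions) act on the first coordinate essentially by $\pm1$ shifts, while $\Theta_3,\Theta_4$ have no preimages among these faces (this should fall out: their incidence matrices mix the coordinates so that a preimage of $e_n$ etc. would have a coordinate pattern outside the families).

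I would organize the verification as four short paragraphs, one per family ($e_n$, $f_n$, $g_n$, $h_n$), each listing for $i=1,2,3,4$ the outcome of the preimage computation; in each case almost all entries are ``no preimage'' and the surviving ones reproduce the table. I would also point out the mild boundary caveat: for $n=1$ some entries like $f_n\in\Theta_2(h_{n-1})=\Theta_2(h_0)$ refer to faces $h_0,e_0,g_0$ with $n=0$, which lie in the initial set $\mcX=\VJP_1\cup\cdots\cup\VJP_4$ up to translation, so the statement ``for every integer $n\geq1$'' is exactly the right range and no degeneracy occurs.

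The main obstacle I expect is purely bookkeeping rather than conceptual: getting the dual substitutions $\Theta_i$ correct, since they are compositions of two or three of the $\tau_j$ and the dual of a composition reverses the order (Proposition~\ref{prop:imgplane}(1)), so a sign or ordering slip in $\transp\bfM_{\theta_i}^{-1}$ would corrupt the whole table. To guard against this I would cross-check each computed $\Theta_i([\mathbf 0,j]^\star)$ against the constraint that it must be contained in the discrete plane $\Gamma_{\transp\bfM_{\theta_i}\bfv}$ for admissible $\bfv$ (Proposition~\ref{prop:imgplane}(3)), and against the already-displayed explicit form of $\SJP_{a,b}$, since $\sJP_{a,b}$ decomposes into the $\theta_i$ by the formula given in Section~\ref{subsec:JP}; this gives an independent consistency check on the whole preimage analysis. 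The remainder is a finite and routine case check, and the computations are of the type the paper explicitly says may be omitted (``such computations will be omitted in the following'').
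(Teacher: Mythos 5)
Your overall route is the same as the paper's: the paper gives no written proof of Lemma~\ref{lemm:graphJP} beyond saying it "can be proved easily by studying the preimages" of $\Theta_1,\ldots,\Theta_4$, i.e.\ exactly the finite face-by-face preimage computation you propose. Two concrete issues, though. First, a setup slip: Proposition~\ref{prop:imgplane}~(\ref{imgplanestatement2}) gives $\Theta_i([\bfx,j]^\star)=\bfM_{\theta_i}^{-1}\bfx+\Theta_i([\mathbf 0,j]^\star)$ with \emph{no} transpose, and the preimage condition is $\bfy=\bfM_{\theta_i}\bfx-\bfP(s)$ for a suffix $s$ of an occurrence; the matrices $\transp\bfM$ only enter through the continued-fraction correspondence, so your formulas $\transp\bfM_{\theta_i}^{-1}\bfx$ and $\transp\bfM_{\theta_i}(\bfx-\bft)=\bfy$ would corrupt the table you are trying to verify.

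Second, and this is the genuine gap: the word "only" in the lemma cannot come out of the raw preimage computation, and your plan to discard a candidate because its "type/position does not belong to the claimed list" is circular — the lemma asserts precisely that no such candidates exist. In fact they do exist: with $\theta_3:1\mapsto3,2\mapsto1,3\mapsto23$ and $\theta_4:1\mapsto3,2\mapsto13,3\mapsto23$ one finds, e.g., $e_n\in\Theta_3\bigl([(1,0,-n),3]^\star\bigr)$ and $e_n\in\Theta_4\bigl([(1,0,1-n),3]^\star\bigr)$, and similarly for the other families, so "$\Theta_3,\Theta_4$ have no preimages" does not simply fall out. What eliminates these candidates is the admissibility filter $\FJP$ under which the lemma is used in the construction of $\GJP$: a preimage $[\bfy,k]^\star$ only counts if $0\leq\langle\bfy,\bfv\rangle<\bfv_k$ for some $\bfv$ with $0<\bfv_1<\bfv_3$ and $0<\bfv_2<\bfv_3$, and the stray candidates above have $\langle\bfy,\bfv\rangle=\bfv_1-n\bfv_3$ or $\bfv_1+(1-n)\bfv_3<0$ for every such $\bfv$ once $n\geq2$. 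Your proof must state this restriction and check it for every discarded candidate; without it the statement you are proving is false as a statement about arbitrary faces. Relatedly, your boundary remark addresses the wrong degeneracy: the delicate case is $n=1$, where the filter no longer kills everything (for instance $[(1,0,0),3]^\star$ lies in admissible planes and $\Theta_4\bigl([(1,0,0),3]^\star\bigr)$ contains both $e_1$ and $f_1$, while $\Theta_3(e_1)$ contains $g_1$ and $h_1$); these extra preimages are harmless for the graph because the faces involved already occur in $\GJP_3$, but a careful write-up has to say so rather than silently assume the table holds verbatim at $n=1$. Finally, reconcile your output with the printed table: the matrix computation (and the graph drawn right after the lemma, with its edge $g_{n+1}\overset{2}{\rightarrow}g_n$) gives $f_n\in\Theta_2(f_{n-1})$ and $g_n\in\Theta_2(g_{n+1})$, so a proof that blindly reproduces the two corresponding "$h$" entries of the table would inherit what appear to be typos.
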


We deduce from Lemma~\ref{lemm:graphJP} that $\GJP$ has two infinite branches,
the first one consisting of vertices $e_n$, $f_n$, and the second one consisting of vertices $g_n$, $h_n$,
with the only edge labels occurring in these infinite branches being $1$ and $2$.
Indeed, the only faces of $\GJP_3$ with preimages allowed by the filter set $\FJP$
which are not vertices of the graph $\GJP_3$ are $e_4$, $f_3$, $g_4$ and $h_3$.

Since we are interested only in the additive Jacobi-Perron-admissible sequences $(i_n)_{n \geq 1}$
(that is, containing infinitely many edges labelled by $3$ or $4$ by Remark~\ref{rema:JPAdditive}),
we remove from $\GJP$ all the vertices which are not at the beginning
of a backward infinite path labelled by such a sequence $(i_n)_{n \geq 1}$.
This yields the following subgraph of $\GJP$ (which now contains only one infinite branch):

\begin{center}
\begin{tikzpicture}[x={1.4cm}, y={1.4cm}]
%%% vertices
\draw (0,0) node[minimum size=6mm, inner sep=0pt, draw,circle,thick, very thick] (b) {$f_b$};
\draw (1,0) node[fill=lightgray, minimum size=6mm, inner sep=0pt, draw,circle,thick, very thick] (d) {$f_d$};
\draw (2,0) node[minimum size=6mm, inner sep=0pt, draw,circle,thick, very thick] (g0) {$g_0$};
\draw (3,0) node[minimum size=6mm, inner sep=0pt, draw,circle,thick, very thick] (g1) {$g_1$};
\draw (4,0) node[minimum size=6mm, inner sep=0pt, draw,circle,thick, very thick] (g2) {$g_2$};
\draw (5,0) node (g3) {};
\draw (0,1) node[minimum size=6mm, inner sep=0pt, draw,circle,thick, very thick] (a) {$f_a$};
\draw (1,1) node[fill=lightgray, minimum size=6mm, inner sep=0pt, draw,circle,thick, very thick] (c) {$f_c$};
\draw (2,1) node[fill=lightgray, minimum size=6mm, inner sep=0pt, draw,circle,thick, very thick] (e) {$f_e$};
\draw (3,1) node[minimum size=6mm, inner sep=0pt, draw,circle,thick, very thick] (h0) {$h_0$};
\draw (4,1) node[minimum size=6mm, inner sep=0pt, draw,circle,thick, very thick] (h1) {$h_1$};
\draw (5,1) node[minimum size=6mm, inner sep=0pt, draw,circle,thick, very thick] (h2) {$h_2$};
\draw (6,1) node (h3) {};
%%% edges
%loops
\draw[->, very thick, blue] (a) .. controls +(60:10mm) and +(120:10mm) .. node [pos=0.15,right] {\small$2$} (a);
\draw[->, very thick, blue] (h0) .. controls +(60:10mm) and +(120:10mm) .. node [pos=0.15,right] {\small$2$} (h0);
\draw[->, very thick, blue] (h1) .. controls +(60:10mm) and +(120:10mm) .. node [pos=0.15,right] {\small$2$} (h1);
\draw[->, very thick, blue] (h2) .. controls +(60:10mm) and +(120:10mm) .. node [pos=0.15,right] {\small$2$} (h2);
\draw[->, very thick, green!80!black] (e) .. controls +( 60:10mm) and +(120:10mm) .. node [pos=0.15,right] {\small$1$} (e);
\draw[->, very thick, green!80!black] (d) .. controls +(240:10mm) and +(300:10mm) .. node [pos=0.85,right] {\small$1$} (d);
\draw[->, very thick, green!80!black] (g0) .. controls +(240:10mm) and +(300:10mm) .. node [pos=0.85,right] {\small$1$} (g0);
\draw[->, very thick, green!80!black] (g1) .. controls +(240:10mm) and +(300:10mm) .. node [pos=0.85,right] {\small$1$} (g1);
\draw[->, very thick, green!80!black] (g2) .. controls +(240:10mm) and +(300:10mm) .. node [pos=0.85,right] {\small$1$} (g2);
%straight edges
\path[->, very thick, blue] (a) edge node [left] {\small$2$} (b);
\path[->, very thick, blue] (a) edge node [above] {\small$2$} (c);
\path[->, very thick, orange] (c) edge node [above] {\small$3$} (e);
\path[->, very thick, red] (d) edge node [left] {\small$4$} (a);
\path[->, very thick, red] (d) edge node [below] {\small$4$} (b);
\path[->, very thick, red] (d) edge node [left] {\small$4$} (c);
%\path[->, very thick, red] (e) edge node [right] {\small$4$} (d);%%
\path[->, very thick, orange] (e) edge node [right] {\small$3$} (g0);
\path[->, very thick, orange] (e) edge node [above] {\small$3$} (h0);
\path[->, very thick, blue] (g0) edge node [below] {\small$2$} (d);
\path[->, very thick, blue]           (h0) edge node [right] {\small$2$} (g0);
\path[->, very thick, green!80!black] (h0) edge node [right] {\small$1$}  (g1);
\path[->, very thick, green!80!black] (h0) edge node [above] {\small$1$} (h1);
\path[->, very thick, blue]           (g1) edge node [below] {\small$2$} (g0);
\path[->, very thick, blue]           (h1) edge node [right] {\small$2$} (g1);
\path[->, very thick, green!80!black] (h1) edge node [right] {\small$1$}  (g2);
\path[->, very thick, green!80!black] (h1) edge node [above] {\small$1$} (h2);
\path[->, very thick, blue]           (g2) edge node [below] {\small$2$} (g1);
\path[->, very thick, blue, dashed]           (h2) edge node [right] {\small$2$} (g2);
\path[->, very thick, green!80!black, dashed] (h2) edge node [right] {\small$1$} (g3);
\path[->, very thick, green!80!black, dashed] (h2) edge node [above] {\small$1$} (h3);
\path[->, very thick, blue, dashed]           (g3) edge node [below] {\small$2$} (g2);
\end{tikzpicture}
\end{center}
where the vertices are given by
$f_a = [(1, 1, -1), 1]^\star$,
$f_b = [(1, 1, -1), 3]^\star$,
$f_c = [(1, 1, -1), 2]^\star$,
$f_d = [(1, -1, 1), 3]^\star$,
$f_e = [(-1, 1, 1), 3]^\star$,
and the edges are the ones shown above, plus, for every $n \geq 0$,
the edges
$\smash{g_n \overset{1}{\rightarrow} g_n}$,
$\smash{h_n \overset{2}{\rightarrow} h_n}$,
$\smash{h_n \overset{2}{\rightarrow} g_n}$,
$\smash{h_n \overset{1}{\rightarrow} g_{n+1}}$,
$\smash{h_n \overset{1}{\rightarrow} h_{n+1}}$
and $\smash{g_{n+1} \overset{2}{\rightarrow} g_n}$.
Moreover we have deleted the edge $\smash{f_e \overset{4}{\rightarrow} f_d}$ in the above graph,
because every infinite path containing it is incompatible
with the condition $a_n = b_n \Rightarrow a_{n+1} \neq 0$
on Jacobi-Perron expansions $(a_n, b_n)_{n \geq 1}$ (see Section~\ref{subsec:JP}).
Indeed, if the edge $\smash{f_e \overset{4}{\rightarrow} f_d}$ were allowed,
then the forbidden product $\Theta_4 \Theta_1^k\Theta3 = \SJP_{1,1}\SJP_{0,k+1}$
would be allowed for some $k \geq 0$, a contradiction.

\paragraph{The graph $\HJP$}
We now construct a generation graph $\HJP$
for the seeds $\VJP_1$, $\VJP_2$, $\VJP_3$ and $\VJP_4$.
Before computing $\HJP$ we must enumerate all the possible minimal $\LJP$-annuli of the $\VJP_i$.
This can be done similarly as for the proof of Proposition~\ref{prop:minannuli_Brun},
and yields eight possible minimal annuli $W_1, \ldots, W_8$,
which are similar to the annuli $W_1, \ldots, W_4$ computed for $\HBrun$.

Let $\HJP$ be the generation graph (according to Definition~\ref{defi:gengraph})
associated with the additive Jacobi-Perron substitutions $\Theta_1, \Theta_2, \Theta_3, \Theta_4$,
the initial set $\mcX = W_1 \cup \cdots \cup W_8$
and the filter $\FJP$.
Thanks to the structure of $\HJP$ we have the following lemma,
which can be proved in exactly the same way as
the corresponding Lemma~\ref{lemm:finitegraph_Brun_H} for the Brun substitutions.

\begin{lemm}
\label{lemm:finitegraph_JP_H}
Let $V \in \{\VJP_1, \VJP_2, \VJP_3, \VJP_4\}$.
There exists $N$ such that
for every $(i_1, \ldots, i_n) \in \{1,2,3,4\}^n$
containing more than $N$ occurrences of $3$ or $4$,
the pattern $\Theta_{i_1} \cdots \Theta_{i_n}(V)$
contains an $\LJP$-annulus of
$\VJP_1$, $\VJP_2$, $\VJP_3$ or $\VJP_4$.
\end{lemm}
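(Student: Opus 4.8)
The plan is to mirror, line for line, the proof of Lemma~\ref{lemm:finitegraph_Brun_H}, replacing the Brun objects by their Jacobi--Perron counterparts. Concretely, I would first invoke the generation graph $\HJP$ constructed above, associated with the additive substitutions $\Theta_1,\Theta_2,\Theta_3,\Theta_4$, the initial set $\mcX = W_1 \cup \cdots \cup W_8$ (the minimal $\LJP$-annuli of $\VJP_1,\ldots,\VJP_4$), and the filter $\FJP$. I would claim the existence of an integer $N \geq 1$ such that every path $f_n \overset{i_n}{\rightarrow} f_{n-1} \overset{i_{n-1}}{\rightarrow} \cdots f_1 \overset{i_1}{\rightarrow} f_0$ in $\HJP$ whose label sequence contains more than $N$ occurrences of $3$ or $4$ must end at a face $f_n \in \VJP_1 \cup \VJP_2 \cup \VJP_3 \cup \VJP_4$.

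The justification of this claim is a finite (algorithmic) verification on $\HJP$: one checks that every strongly connected component of $\HJP$ containing an edge labelled $3$ or $4$ consists only of vertices lying in one of the $\VJP_i$. Since $\HJP$ is finite (its construction, like that of $\HBrun$, terminates after finitely many iterations of Definition~\ref{defi:gengraph}), this can be checked exhaustively using the attached Sage code. Any sufficiently long path carrying enough labels $3$ or $4$ must traverse such a component, and hence originate from a face in some $\VJP_i$; this gives the desired $N$. Here I rely on Remark~\ref{rema:JPAdditive}, which says that additive Jacobi-Perron-admissible sequences are precisely those with infinitely many labels $3$ or $4$, so that ``more than $N$ occurrences of $3$ or $4$'' is the natural counting condition, analogous to ``more than $N$ occurrences of $3$'' in the Brun case.

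With $N$ in hand, the lemma follows directly from Statement~\ref{graphprop1} of Proposition~\ref{prop:gengraph}, applied with $\Sigma_1,\ldots,\Sigma_\ell = \Theta_1,\ldots,\Theta_4$, filter $\mcF = \FJP$, $\mcW = \{W_1,\ldots,W_8\}$, $\mcV = \{\VJP_1,\VJP_2,\VJP_3,\VJP_4\}$, and a sequence $(i_1,\ldots,i_n)$ containing at least $N$ occurrences of $3$ or $4$. The hypotheses of Proposition~\ref{prop:gengraph} are routine to verify: the property ``$\Gv \subseteq \FJP$ implies $\Theta_i(\Gv) \subseteq \FJP$'' follows from Proposition~\ref{prop:imgplane}(3) together with the fact that each $\theta_i$ comes from the Jacobi-Perron algorithm and thus preserves the arithmetic constraints $0 < \bfv_1 < \bfv_3$, $0 < \bfv_2 < \bfv_3$ defining $\FJP$ (see Remark~\ref{rema:arith_assump}); the covering conditions on $\mcW$ (every $\Gv \subseteq \FJP$ contains some $W_i$) come from Proposition~\ref{prop:minannuli_JP} applied to the $\VJP_i$; and the three conditions on $\mcV$ hold because the $\VJP_i$ are precisely the minimal patterns enumerated in Proposition~\ref{prop:minannuli_JP}, each realizable inside a $\FJP$-plane.

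The main obstacle is purely computational rather than conceptual: it is the verification that the strongly connected components of $\HJP$ carrying a $3$- or $4$-edge are supported on the $\VJP_i$. The graph $\HJP$ is larger than $\HBrun$ (eight minimal annuli $W_1,\ldots,W_8$ rather than four), and one must be careful that, unlike $\GJP$, the graph $\HJP$ is still finite — this should follow because the backtracking of preimages of the (bounded) faces of the $W_i$ under $\Theta_1,\ldots,\Theta_4$, restricted to the filter $\FJP$, cannot escape to infinity the way it does for $\GJP$'s infinite branches $e_n,f_n,g_n,h_n$; a short argument via Lemma~\ref{lemm:graphJP} (those infinite families arise only from specific faces not present among the preimages relevant here) pins this down. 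Everything else transfers verbatim from the Brun case, so the proof itself is a single sentence: ``This is proved exactly as Lemma~\ref{lemm:finitegraph_Brun_H}, using $\HJP$, $\mcW = \{W_1,\ldots,W_8\}$ and $\mcV = \{\VJP_1,\ldots,\VJP_4\}$ in place of their Brun analogues.''
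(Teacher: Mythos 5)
Your proposal is correct and follows essentially the same route as the paper, whose entire proof of this lemma is the remark that it goes exactly as Lemma~\ref{lemm:finitegraph_Brun_H}, using $\HJP$ with initial set $W_1 \cup \cdots \cup W_8$, the filter $\FJP$, counting occurrences of $3$ or $4$, and concluding via Statement~\ref{graphprop1} of Proposition~\ref{prop:gengraph}. Your additional caution about the finiteness (or at least the manageable structure) of $\HJP$, argued via Lemma~\ref{lemm:graphJP}, addresses a point the paper passes over silently and is a reasonable supplement rather than a deviation.
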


\subsection{Generating translates of seeds}
\label{subsec:transseeds}

\begin{lemm}
\label{lemm:balls_Brun}
There exists $N$ such that if a finite sequence $(i_1, \ldots, i_n) \in \{1,2,3\}^n$
contains more than $N$ occurrences of $3$,
then $\SBrun_{i_1} \cdots \SBrun_{i_n}(\mcU)$ contains a translate of $\VBrun_1$ or $\VBrun_2$.
\end{lemm}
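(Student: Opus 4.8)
## Proof Proposal for Lemma~\ref{lemm:balls_Brun}

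The plan is to exploit the generation graph $\GBrunmini$ (the left graph in Figure~\ref{fig:GBrunmini}) together with Statement~\ref{graphprop2} of Proposition~\ref{prop:gengraph}. Recall that $\GBrunmini$ is the subgraph of $\GBrun$ consisting of the vertices \emph{not} contained in $\mcU$ that lie at the end of an infinite backward path carrying infinitely many $3$s. The key point is that a face $f_0$ belongs to $\SBrun_{i_1}\cdots\SBrun_{i_n}(\mcU)$ only if, when we backtrack its preimages through the generation graph $\GBrun$ (which exists and is finite, as recorded in Section~\ref{subsec:graph_Brun}), the resulting path terminates at a vertex \emph{inside} $\mcU$; contrapositively, by Statement~\ref{graphprop2}, if every backward path from $f_0$ labelled by $(i_1,\dots,i_n)$ lands on a vertex outside $\mcU$, then $f_0 \notin \SBrun_{i_1}\cdots\SBrun_{i_n}(\mcU)$.

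First I would reformulate the goal positively: I want to show that for $n$ large with many $3$s, the pattern $\SBrun_{i_1}\cdots\SBrun_{i_n}(\mcU)$ \emph{does} contain a translate of $\VBrun_1$ or $\VBrun_2$. By Proposition~\ref{prop:minannuli_Brun}, a translate of $\VBrun_1$ or $\VBrun_2$ appears around the translate of $\mcU$ sitting at any vertex that came from $\mcU$ under iteration; more precisely, since $\mcU$ is contained in every discrete plane and $\SBrun_i(\mcU)$ always contains a translate of $\mcU$ (one checks this directly from the explicit images of $\SBrun_i$ given in Section~\ref{subsec:Brun}), the image $\SBrun_{i_1}\cdots\SBrun_{i_n}(\mcU)$ contains a translate of $\mcU$ together with \emph{some} surrounding faces, and by Proposition~\ref{prop:imgplane} this whole configuration lies in a discrete plane $\Gamma_{\bfv}$ with $0<\bfv_1<\bfv_2<\bfv_3$ (using Proposition~\ref{prop:Brun_exp} and the arithmetic constraints of the Brun algorithm). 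So it suffices to prove that, for $n$ large enough with enough $3$s, enough surrounding faces are generated so that one of the minimal annuli $\VBrun_1$, $\VBrun_2$ is completed.

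The main step is a finiteness/reachability analysis of the graph $\GBrunmini$. Each face $e$ that appears in a minimal annulus of $\mcU$ (i.e.\ in $\VBrun_1\cup\VBrun_2$) and is \emph{not} in $\mcU$ gives an initial vertex of $\GBrun$; its missing status persists under backtracking exactly along the vertices of $\GBrunmini$. The structure of $\GBrunmini$ (nine vertices, with the reduced four-vertex version on the right of Figure~\ref{fig:GBrunmini}) shows that every infinite backward path carrying infinitely many $3$s must, after finitely many steps, enter a strongly connected component; inspecting the components of $\GBrunmini$, one sees that in every such component the edge labellings avoid the label $3$ beyond a bounded number of occurrences — equivalently, there is a uniform bound $N$ such that any path in $\GBrunmini$ whose label contains more than $N$ symbols equal to $3$ simply does not exist, because the label language of $\GBrunmini$-paths, read off the reduced graph, only produces boundedly many $3$s before forcing a loop with labels in $\{1,2\}$. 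Hence for $(i_1,\dots,i_n)$ with more than $N$ occurrences of $3$, no face of $\VBrun_1\cup\VBrun_2 \setminus \mcU$ can be ``missing'': every such face must actually be present in $\SBrun_{i_1}\cdots\SBrun_{i_n}(\mcU)$, because the only alternative — a backward path staying outside $\mcU$ — is ruled out. Therefore $\SBrun_{i_1}\cdots\SBrun_{i_n}(\mcU)$ contains a full translate of $\VBrun_1$ or $\VBrun_2$.

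I expect the main obstacle to be the bookkeeping in the previous paragraph: one must verify carefully, from the explicit edge set of $\GBrunmini$ given in Figure~\ref{fig:GBrunmini}, that every path whose label has sufficiently many $3$s forces the path back into $\mcU$ (or fails to exist), and simultaneously that the initial faces of $\GBrun$ one really needs are exactly those of the minimal annuli $\VBrun_1$, $\VBrun_2$ as enumerated in Proposition~\ref{prop:minannuli_Brun}. This is the same type of computer-assisted strongly-connected-component argument used in the proof of Lemma~\ref{lemm:finitegraph_Brun_H}, and indeed the cleanest write-up is to mirror that proof: take $N$ to be the maximal number of $3$-labelled edges on any path of $\GBrunmini$ that does not yet lead into $\mcU$, note this is finite by inspection of the (finite) graph, and then invoke Statement~\ref{graphprop2} of Proposition~\ref{prop:gengraph} to conclude. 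The translate appears around the translate of $\mcU$ produced by the iteration, and by Proposition~\ref{prop:imgplane} the entire configuration remains within an admissible discrete plane, so Proposition~\ref{prop:minannuli_Brun} applies verbatim to identify the completed annulus as a translate of $\VBrun_1$ or $\VBrun_2$.
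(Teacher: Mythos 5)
There is a genuine gap, and it sits exactly at the step your argument depends on. You claim that paths in $\GBrunmini$ can only carry boundedly many labels equal to $3$, so that a sequence with more than $N$ threes forces every backward chain into $\mcU$. This is false: $\GBrunmini$ is \emph{defined} as the subgraph of $\GBrun$ spanned by the vertices not in $\mcU$ that lie at the end of an infinite backward path containing infinitely many $3$s, and it does contain cycles whose labels include $3$ (the remark after Theorem~\ref{theo:bad_Brun} exhibits loops labelled by $3332$ and by $3232$, and the sequence $(2311)^\infty$ of Figure~\ref{fig:gen_goodbad} is of this type). For such Brun-admissible sequences the faces of the minimal annuli $\VBrun_1,\VBrun_2$ adjacent to the origin are \emph{never} generated from $\mcU$ --- that is precisely the content of Theorem~\ref{theo:bad_Brun} and Corollary~\ref{coro:fractal_zero} --- so your conclusion that no face of $\VBrun_1\cup\VBrun_2\setminus\mcU$ can be missing fails on exactly the sequences the lemma is really about. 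What your argument does prove is only the easy half: if the sequence is not eventually a path labelling of $\GBrunmini$, then the annulus appears around $\mcU$ itself (this is essentially the positive direction of Theorem~\ref{theo:bad_Brun}, via Statement~1 of Proposition~\ref{prop:gengraph}).

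The obstruction is also methodological: the lemma asserts the appearance of a \emph{translate} of $\VBrun_1$ or $\VBrun_2$, possibly far from the origin, whereas generation graphs backtrack preimages of faces at \emph{fixed} positions and can therefore only decide whether the specific annulus centered at $\mcU$ is completed; no reachability analysis of $\GBrun$ or $\GBrunmini$ can detect a copy of a seed appearing at another location. The paper's proof treats the residual (bad) sequences by a different device: it first shows, by enumerating path labellings in the four-vertex graph of Figure~\ref{fig:GBrunmini}, that such a sequence read backwards must begin with a word in a short list of languages such as $21^\star321^\star31^\star2$, and then follows the hand-built graph of Figure~\ref{fig:ball_Brun}, whose vertices are translation classes of patterns and whose edges $P \overset{i}{\rightarrow} Q$ mean that $\SBrun_i(P)$ contains a translated copy of $Q$; the initial pattern $P_0$ is shown to lie in the image of $\mcU$ after any product containing $\SBrun_3$ at least three times, and every sufficiently long admissible path in that graph reaches a translate of $\VBrun_1$ or $\VBrun_2$. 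Some argument of this kind, tracking patterns up to translation rather than faces at fixed positions, is unavoidable and is missing from your proposal.
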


\begin{proof}
We can assume that the sequences $(i_1, \ldots, i_n)$
we consider are the edge labellings of a backward path
$\smash{\bullet \overset{i_n}{\rightarrow} \cdots \bullet \overset{i_1}{\rightarrow} \bullet}$
in the four-vertex graph given in Section~\ref{subsec:graph_Brun}, Figure~\ref{fig:GBrunmini}.
Indeed, the language of the labellings of the four-vertex graph is
the same as the one of the graph $\GBrunmini$ (given in the same figure),
and every path containing sufficiently many $3$s which is not contained
in $\GBrunmini$ must originate from $U$ (by construction of $\GBrunmini$).

Using this restriction, we can assume that $(i_n, \ldots, i_1)$
necessarily starts with a word in one of the following languages, for $n$ large enough:
$21^\star321^\star31^\star2,$ % \qquad &
$21^\star331^\star31^\star21^\star2,$ %  \\
$21^\star331^\star31^\star21^\star33,$ %  \qquad &
$21^\star331^\star31^\star21^\star321^\star31^\star2(1 \cup 2 \cup 3).$ % ,
This enumeration can be carried out easily by enumerating the possible path labellings in the four-vertex graph
(we must turn counter-clockwise when sufficiently many $3$s are read).
Also note that each of these languages starts with $21^\star32$ or $21^\star33$, which corresponds to starting from the
top-left vertex in the four-vertex graph.
This is not a restriction: indeed, if $(i_n, \ldots, i_1)$ does not start in this way,
then we can consider longer sequences $(i_{n+k}, \ldots, i_1)$ which agree with the above languages.

It now remains to prove that if $(i_n, \ldots, i_1)$ is in one of the following languages
then $\SBrun_{i_1} \cdots \SBrun_{i_n}(\mcU)$ contains a translate of $\VBrun_1$ or $\VBrun_2$.
To do so we track all the possible sequences of iterations starting from $\mcU$,
and we explicitly check that a translate of one of the seeds $V_1$ or $V_2$ is eventually generated.
This is done thanks to the graph shown in Figure~\ref{fig:ball_Brun}.
The vertices are (translation classes of) patterns,
and for every edge $\smash{P \overset{i}{\longrightarrow} Q}$ in the graph,
$\SBrun_i(P)$ contains a translated copy of $Q$.
The pattern $P_0 = [(0, -1, 1), 3]^\star, [(0, 0, 0), 2]^\star, [(1, 0, 0), 2]^\star, [(1, 0, 0), 3]^\star$
(the leftmost vertex of the graph)
can be easily proved to belong to the image of $U$
after iterating any sequence of substitutions containing $\SBrun_3$ at least $3$ times.

Note that unlike the generation graphs constructed in Sections~\ref{subsec:graph_Brun} and~\ref{subsec:graph_JP},
the graph of Figure~\ref{fig:ball_Brun} used in the present proof
has not been constructed algorithmically.
Constructing such a graph is tedious,
but checking its validity is easy (by using computer algebra).
\end{proof}

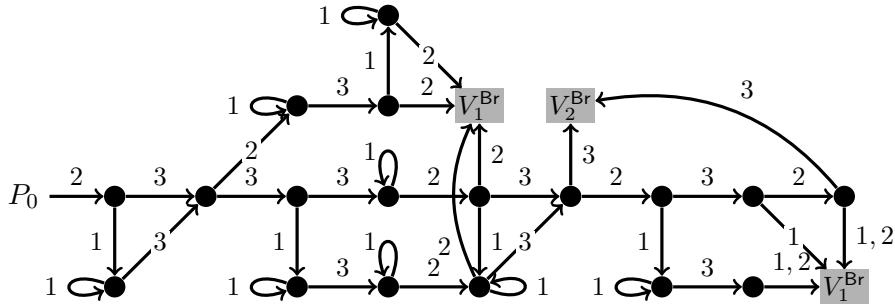
\begin{figure}[!ht]
\centering
\begin{tikzpicture}[x={(0,12mm)},y={(12mm,0)}]
%%% VERTICES
\node at (0,0) (0) {$P_0$};
\node[circle,fill=black,minimum size=8pt,inner sep=0pt] at (0,1) (2) {};
\node[circle,fill=black,minimum size=8pt,inner sep=0pt] at (-1,1) (12) {};
\node[circle,fill=black,minimum size=8pt,inner sep=0pt] at (0,2) (32) {};
\node[circle,fill=black,minimum size=8pt,inner sep=0pt] at (-1,3) (1332) {};
\node[circle,fill=black,minimum size=8pt,inner sep=0pt] at (0,3) (332) {};
\node[circle,fill=black,minimum size=8pt,inner sep=0pt] at (1,3) (232) {};
\node[circle,fill=black,minimum size=8pt,inner sep=0pt] at (-1,4) (31332) {};
\node[circle,fill=black,minimum size=8pt,inner sep=0pt] at (0,4) (3332) {};
\node[circle,fill=black,minimum size=8pt,inner sep=0pt] at (1,4) (3232) {};
\node[circle,fill=black,minimum size=8pt,inner sep=0pt] at (2,4) (13232) {};
\node[circle,fill=black,minimum size=8pt,inner sep=0pt] at (-1,5) (123332) {};
\node[circle,fill=black,minimum size=8pt,inner sep=0pt] at (0,5) (23332) {};
\node[fill=black!30,inner sep=1pt] at (1,5) (A1) {\small$\VBrun_1$};
\node[circle,fill=black,minimum size=8pt,inner sep=0pt] at (0,6) (323332) {};
\node[fill=black!30,inner sep=1pt] at (1,6) (A2) {\small$\VBrun_2$};
\node[circle,fill=black,minimum size=8pt,inner sep=0pt] at (0,7) (2323332) {};
\node[circle,fill=black,minimum size=8pt,inner sep=0pt] at (-1,7) (12323332) {};
\node[circle,fill=black,minimum size=8pt,inner sep=0pt] at (0,8) (32323332) {};
\node[circle,fill=black,minimum size=8pt,inner sep=0pt] at (-1,8) (312323332) {};
\node[circle,fill=black,minimum size=8pt,inner sep=0pt] at (0,9) (232323332) {};
\node[fill=black!30,inner sep=1pt] at (-1,9) (A1b) {\small$\VBrun_1$};
%%% EDGES
\path[->, very thick] (0) edge node [above] {\small$2$} (2);
%1
\path[->, very thick] (2) edge node [left] {\small$1$} (12);
\path[->, very thick] (2) edge node [above] {\small$3$} (32);
\path[->, very thick] (12) edge node [fill=white,inner sep=1pt] {\small$3$} (32);
%2
\path[->, very thick] (32) edge node [above] {\small$3$} (332);
\path[->, very thick] (32) edge node [fill=white,inner sep=1pt] {\small$2$} (232);
%3
\path[->, very thick] (332) edge node [left] {\small$1$} (1332);
\path[->, very thick] (332) edge node [above] {\small$3$} (3332);
\path[->, very thick] (1332) edge node [above] {\small$3$} (31332);
\path[->, very thick] (232) edge node [above] {\small$3$} (3232);
%4
\path[->, very thick] (3332) edge node [above] {\small$2$} (23332);
\path[->, very thick] (31332) edge node [above] {\small$2$} (123332);
\path[->, very thick] (3232) edge node [left] {\small$1$} (13232);
\path[->, very thick] (3232) edge node [above] {\small$2$} (A1);
\path[->, very thick] (13232) edge node [fill=white,inner sep=1pt] {\small$2$} (A1);
%5
\path[->, very thick] (23332) edge node [right] {\small$1$} (123332);
\path[->, very thick] (23332) edge node [right] {\small$2$} (A1);
\path[->, very thick] (23332) edge node [above] {\small$3$} (323332);
\path[->, very thick] (123332) edge node [fill=white,inner sep=1pt] {\small$3$} (323332);
\draw[->, very thick] (123332) .. controls +(115:10mm) and +(245:10mm) .. node [pos=0.18,left] {\small$2$} (A1);
%6
\path[->, very thick] (323332) edge node [right] {\small$3$} (A2);
\path[->, very thick] (323332) edge node [above] {\small$2$} (2323332);
%7
\path[->, very thick] (2323332) edge node [left] {\small$1$} (12323332);
\path[->, very thick] (2323332) edge node [above] {\small$3$} (32323332);
\path[->, very thick] (12323332) edge node [above] {\small$3$} (312323332);
%8
\path[->, very thick] (32323332) edge node [above] {\small$2$} (232323332);
\path[->, very thick] (32323332) edge node [fill=white,inner sep=1pt] {\small$1$} (A1b);
\path[->, very thick] (312323332) edge node [above] {\small$1,2$} (A1b);
%9
\path[->, very thick] (232323332) edge node [right] {\small$1,2$} (A1b);
\path[->, very thick] (232323332) edge [bend right] node [above right] {\small$3$} (A2);
%
%%% LOOPS
%1
\draw[->, very thick] (12) .. controls +(160:8mm) and +(200:8mm) .. node [left] {\small$1$} (12);
%3
\draw[->, very thick] (1332) .. controls +(160:8mm) and +(200:8mm) .. node [left] {\small$1$} (1332);
\draw[->, very thick] (232) .. controls +(160:8mm) and +(200:8mm) .. node [left] {\small$1$} (232);
%4
\draw[->, very thick] (13232) .. controls +(160:8mm) and +(200:8mm) .. node [left] {\small$1$} (13232);
\draw[->, very thick] (3332) .. controls +(70:8mm) and +(110:8mm) .. node [left] {\small$1$} (3332);
\draw[->, very thick] (31332) .. controls +(70:8mm) and +(110:8mm) .. node [left] {\small$1$} (31332);
%5
\draw[->, very thick] (123332) .. controls +(-20:8mm) and +(20:8mm) .. node [right] {\small$1$} (123332);
%7
\draw[->, very thick] (12323332) .. controls +(160:8mm) and +(200:8mm) .. node [left] {\small$1$} (12323332);
\end{tikzpicture}
\caption{The graph used in the proof of Lemma~\ref{lemm:balls_Brun}.
The vertices are patterns (up to translation).
There is an edge from $P$ to $Q$ labelled by $i$
if $\SBrun_i(P)$ contains a \emph{translated copy}~of~$Q$.}
\label{fig:ball_Brun}
\end{figure}

We now prove an analogue of the previous lemma for Jacobi-Perron substitutions.
The proof is easier (we need a smaller graph); indeed the set of ``bad'' Jacobi-Perron expansions
is more constrained (see Theorem~\ref{theo:bad_JP}).

\begin{lemm}
\label{lemm:balls_JP}
Let $(a_n,b_n)_{n \geq 1}$ be an admissible Jacobi-Perron expansion.
Then there exists $N \geq 0$ such that
$\SJP_{a_1,b_1} \cdots \SJP_{a_N,b_N}(\mcU)$ contains a translate of
$\VJP_1$, $\VJP_2$, $\VJP_3$ or $\VJP_4$.
\end{lemm}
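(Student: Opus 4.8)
The statement is the Jacobi-Perron analogue of Lemma~\ref{lemm:balls_Brun}, and the plan is to mirror that proof while exploiting the fact that admissible Jacobi-Perron expansions are far more constrained. First I would recall the structure already established: every admissible expansion $(a_n,b_n)_{n\geq 1}$ decomposes (Section~\ref{subsec:JP}) into a product of the additive substitutions $\theta_1,\dots,\theta_4$, and by Remark~\ref{rema:JPAdditive} the corresponding edge labelling over $\{1,2,3,4\}$ must contain infinitely many occurrences of $3$ or $4$. So it suffices to prove that if an additive sequence $(i_1,\ldots,i_m)\in\{1,2,3,4\}^m$, arising as the additive expansion of an admissible Jacobi-Perron prefix, contains sufficiently many $3$'s or $4$'s, then $\Theta_{i_1}\cdots\Theta_{i_m}(\mcU)$ contains a translate of one of $\VJP_1,\VJP_2,\VJP_3,\VJP_4$. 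Taking $N$ large enough forces $\SJP_{a_1,b_1}\cdots\SJP_{a_N,b_N}$ to be such a long-enough additive product, which gives the lemma.

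Next I would use the reduced subgraph of $\GJP$ drawn in Section~\ref{subsec:graph_JP} (the ``one infinite branch'' graph on the vertices $f_a,f_b,f_c,f_d,f_e,g_n,h_n$). By construction of generation graphs and Statement~\ref{graphprop2} of Proposition~\ref{prop:gengraph}, any face that is \emph{not} at the start of a backward infinite admissible path originates from $\mcU$; hence, exactly as in the Brun case, for an admissible additive prefix with enough $3$'s and $4$'s, the only relevant backward paths are the finite ones living in this small graph, and every path accumulating many $3$/$4$ labels must originate from a vertex lying in $\mcU$. This turns the problem into tracking the forward images of $\mcU$ along admissible additive words, which I would organize by a small finite ``ball graph'' analogous to Figure~\ref{fig:ball_Brun}: vertices are patterns up to translation, with an edge $P\xrightarrow{i}Q$ whenever $\Theta_i(P)$ contains a translated copy of $Q$. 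I would start this graph from an explicit small pattern $P_0$ that is easily checked to lie in $\Theta_{j_1}\cdots\Theta_{j_k}(\mcU)$ after reading a bounded number of $3$'s and $4$'s (using the explicit forms of $\SJP_{a,b}$ and $\sJP_{a,b}$), and verify that along every admissible additive continuation one eventually reaches a vertex labelled $\VJP_1,\VJP_2,\VJP_3$ or $\VJP_4$. Correctness of each edge is a direct (computer-checkable) computation with dual substitutions, just as noted after Figure~\ref{fig:ball_Brun}.

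The final step is bookkeeping: converting ``enough $3$'s and $4$'s in the additive word'' back to ``$N$ factors $\SJP_{a_k,b_k}$''. Since each $\sJP_{a,b}=\theta_3\theta_1^{\,b-a-1}\theta_2^{\,a}$ (or its endpoint variants) contains exactly one occurrence of $\theta_3$ or $\theta_4$, reading $N$ consecutive Jacobi-Perron digits produces an additive word with exactly $N$ occurrences of the labels $3,4$; choosing $N$ larger than the bound coming from the ball graph finishes the proof. I would also note the two constraints that simplify matters here compared with Brun: admissibility forbids the edge $f_e\xrightarrow{4}f_d$ (this is precisely the $a_n=b_n\Rightarrow a_{n+1}\neq 0$ condition, already discussed in Section~\ref{subsec:graph_JP}), and the deleted long infinite branch means far fewer path prefixes to enumerate, so the ball graph is genuinely smaller than the one in Figure~\ref{fig:ball_Brun}.

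\textbf{Main obstacle.} The real work — and the only delicate point — is producing the ball graph: choosing a starting pattern $P_0$ that provably appears in the image of $\mcU$ after boundedly many $\Theta_i$'s, and then checking that the (finitely many) admissible label sequences through the reduced $\GJP$-branch all drive $P_0$ forward to some $\VJP_i$ without the enumeration blowing up. Each individual edge check is a routine dual-substitution computation, but assembling a graph that is simultaneously valid, finite, and covers all admissible continuations requires care; as in Lemma~\ref{lemm:balls_Brun}, this graph is not produced algorithmically but verified by computer algebra, so the burden is in the casework of admissible path prefixes rather than in any conceptual difficulty.
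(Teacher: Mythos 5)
Your proposal is correct and follows essentially the same route as the paper: exactly as in Lemma~\ref{lemm:balls_Brun}, the paper reduces, via Theorem~\ref{theo:bad_JP}, to additive words in the constrained language $31^\star31^\star22^\star4$ and then tracks translated patterns through a small graph of inclusions $\Theta_i(P)\supseteq Q$ starting from $\mcU$. The only difference is a shortcut you did not anticipate: the paper's tracking graph need not terminate at a $\VJP_i$ directly — it stops at $\mcU$ together with a single extra face and invokes Theorem~\ref{theo:bad_JP} once more to conclude that this pattern generates a seed, so the casework is even lighter than in your plan.
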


\begin{proof}
Similarly as in the proof of Lemma~\ref{lemm:balls_Brun},
we can apply Theorem~\ref{theo:bad_JP},
so that the result only needs to be proved for products of the form
$\Theta_{i_1} \cdots \Theta_{i_n}$
with $i_1 \cdots i_n \in 31^\star31^\star22^\star4$,
which is a translation of the condition given in Theorem~\ref{theo:bad_JP}
in terms of additive Jacobi-Perron substitutions.
This is settled thanks to the following graph,
in which every edge $\smash{P \overset{i}{\longrightarrow} Q}$
means that $\Theta_i(P)$ contains a translated copy of $Q$.
\begin{center}
\begin{tikzpicture}[x=1cm,y=0.85cm]
\node at (0,0) (0) {%
    \myvcenter{\begin{tikzpicture}
    [x={(-0.173205cm,-0.100000cm)}, y={(0.173205cm,-0.100000cm)}, z={(0.000000cm,0.200000cm)}]
    \definecolor{facecolor}{rgb}{0.800000,0.800000,0.800000}
    \fill[fill=facecolor, draw=black, shift={(0,0,0)}]
    (0, 0, 0) -- (0, 1, 0) -- (0, 1, 1) -- (0, 0, 1) -- cycle;
    \fill[fill=facecolor, draw=black, shift={(0,0,0)}]
    (0, 0, 0) -- (0, 0, 1) -- (1, 0, 1) -- (1, 0, 0) -- cycle;
    \fill[fill=facecolor, draw=black, shift={(0,0,0)}]
    (0, 0, 0) -- (1, 0, 0) -- (1, 1, 0) -- (0, 1, 0) -- cycle;
    \end{tikzpicture}}%
};
\node at (2,0) (1) {\input{fig/WJP3.tex}};
\node at (4,0) (2) {\input{fig/WJP33.tex}};
\node at (6,0) (3) {\input{fig/WJP233.tex}};
\node at (8,0) (4) {\input{fig/WJP4233.tex}};

\path[->, very thick] (0) edge node [above] {\small$3$} (1);
\path[->, very thick] (1) edge node [above] {\small$3$} (2);
\path[->, very thick] (2) edge node [above] {\small$2$} (3);
\path[->, very thick] (3) edge node [above] {\small$4$} (4);

\draw[->, very thick] (1) .. controls +(110:9mm) and +(70:9mm) .. node [pos=0.20,left] {\small$1$} (1);
\draw[->, very thick] (2) .. controls +(110:9mm) and +(70:9mm) .. node [pos=0.20,left] {\small$1$} (2);
\draw[->, very thick] (3) .. controls +(110:9mm) and +(70:9mm) .. node [pos=0.20,left] {\small$2$} (3);
\end{tikzpicture}
\end{center}
Finally, it can be checked that the last pattern
\smash{\input{fig/WJP4233.tex}}
is sufficient to generate a seed $\VJP_i$,
thanks to Theorem~\ref{theo:bad_JP} below
(the proof of which does not make use of Lemma~\ref{lemm:balls_JP}).
\end{proof}

\section{Main results}
\label{sect:mainresults}

We first state discrete plane generation results in Section~\ref{subsec:generation}.
We then state corollaries of topological, dynamical and number-theoretical nature
in Sections~\ref{subsec:applis_topo},~\ref{subsec:applis_dyn}~and~\ref{subsec:applis_nt}.
We recall that a product $\sBrun_{i_1} \cdots \sBrun_{i_n}$
is \tdef{Brun-admissible} if $i_k = 3$ for some $n \in \{1, \ldots, n\}$,
and that a product $\sJP_{a_1,b_1} \cdots \sJP_{a_n,b_n}$
is \tdef{Jacobi-Perron-admissible}
if for every $1 \leq k \leq n$, we have
(1) $0 \leq a_k \leq b_k$,
(2) $b_k \neq 0$, and
(3) $a_k = b_k$ implies $a_{k+1} \neq 0$.
We also recall that the \tdef{combinatorial radius} $\rad(P)$ of a pattern $P$ containing the origin
measures the minimal distance between $\mathbf 0$ and the boundary of $P$
(see Section~\ref{subsec:dissub} for a precise definition).

\subsection{Discrete plane generation}
\label{subsec:generation}

\paragraph{Existence of finite seeds}

We start by stating the existence of finite seeds
such that iterating \emph{any} (Brun or Jacobi-Perron) admissible sequence of substitutions on them
generates patterns with arbitrarily large minimal combinatorial radius.
These finite seeds $\smash{\VBrun_1, \VBrun_2}$ and $\smash{\VJP_1, \VJP_2, \VJP_3, \VJP_4}$ are given explicitly
in Propositions~\ref{prop:minannuli_Brun} and~\ref{prop:minannuli_JP},
and are proved to be minimal in the sense that every admissible discrete plane must contain one of them.

\begin{theo}
\label{theo:seeds}
For every $R \geq 0$ there exists $N \geq 0$ such that
\begin{enumerate}
\item
\label{seed_statement1}
$\rad(\SBrun_{i_1} \cdots \SBrun_{i_n}(V)) \geq R$
for every $(i_1 \cdots i_n) \in \{1,2,3\}^n$ that contains more than $N$ occurrences of $3$,
and for every $V \in \{\VBrun_1, \VBrun_2\}$;
\item
\label{seed_statement2}
$\rad(\SJP_{a_1,b_1} \cdots \SJP_{a_n,b_n}(V)) \geq R$
for every admissible Jacobi-Perron expansion $(a_1,b_1)$, \ldots, $(a_n,b_n)$ such that $n \geq N$,
and for every $V \in \{\VJP_1, \VJP_2, \VJP_3, \VJP_4\}$.
\end{enumerate}
\end{theo}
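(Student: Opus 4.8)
The plan is to combine the two main technical ingredients established earlier: the annulus property (Proposition~\ref{prop:annulus_induction}, via Property~A from Propositions~\ref{prop:propA_Brun} and~\ref{prop:propA_JP}, together with the strong covering results Propositions~\ref{prop:cov_Brun} and~\ref{prop:cov_JP}) and the ``initialization'' provided by the generation graphs $\HBrun$ and $\HJP$ (Lemmas~\ref{lemm:finitegraph_Brun_H} and~\ref{lemm:finitegraph_JP_H}). The key observation is that if a pattern $P$ contains $\mcU$ together with an $\LBrun$-annulus $A_1$ of $\mcU$, then by Proposition~\ref{prop:minannuli_Brun} it contains $\VBrun_1$ or $\VBrun_2$, each of which is itself a pattern of the form (seed $\cup$ minimal annulus of that seed). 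So once an annulus is generated, we can bootstrap: applying further substitutions turns that annulus into a new annulus (annulus property) surrounding the image of the seed, and meanwhile Lemma~\ref{lemm:finitegraph_Brun_H} guarantees that \emph{another} fresh annulus of $\VBrun_1$ or $\VBrun_2$ is generated inside, after sufficiently many more occurrences of $3$. Iterating this produces an unbounded number of nested concentric annuli of positive combinatorial width, which forces $\rad \to \infty$.

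Concretely, for Statement~\ref{seed_statement1}, I would first argue that each $V \in \{\VBrun_1, \VBrun_2\}$ already contains $\mcU$ and an $\LBrun$-annulus of $\mcU$ (this is how $\VBrun_1, \VBrun_2$ are defined in Proposition~\ref{prop:minannuli_Brun}), so $\rad(V) \geq 1$. Then I would show by induction on the number $k$ of ``blocks'' that, given $R$, there is $N$ (take $N = k \cdot N_0$ where $N_0$ is the constant of Lemma~\ref{lemm:finitegraph_Brun_H} and $k \geq R$) such that any word with more than $N$ occurrences of $3$ can be split as $(i_1,\dots,i_n) = w_1 w_2 \cdots w_k w'$ with each $w_j$ containing more than $N_0$ occurrences of $3$. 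Reading the word from the right (i.e., composing $\SBrun_{i_1}\cdots$), after processing $w'$ and then $w_k$ we obtain a pattern containing a translate (in fact, since we track around the origin, an honest copy) of some $\VBrun_{\epsilon_k}$, hence an $\LBrun$-annulus of some seed; the key point is that the relevant substitutions and patterns stay inside the family of discrete planes with $0 < \bfv_1 < \bfv_2 < \bfv_3$, which is exactly the arithmetic restriction under which Property~A and the strong covering results hold (Remark~\ref{rema:arith_assump}). Processing each further block $w_j$ then, on one hand, transports all previously-built annuli to new annuli via Proposition~\ref{prop:annulus_induction}, and on the other hand generates one new annulus via Lemma~\ref{lemm:finitegraph_Brun_H}. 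Since the annulus property preserves the property ``$P \cap \partial(P \cup A) = \varnothing$'', the annuli remain genuinely nested and each contributes at least $1$ to the combinatorial radius, giving $\rad \geq k \geq R$. Statement~\ref{seed_statement2} is identical, using the additive decomposition of $\sJP_{a,b}$ into $\theta_i$'s (so that an admissible Jacobi-Perron expansion of length $n$ corresponds to a $\theta$-word with at least $n$ occurrences of $3$ or $4$ by Remark~\ref{rema:JPAdditive}), together with Propositions~\ref{prop:propA_JP}, \ref{prop:cov_JP}, \ref{prop:minannuli_JP} and Lemma~\ref{lemm:finitegraph_JP_H}, all restricted to planes with $0 < \bfv_1 < \bfv_3$ and $0 < \bfv_2 < \bfv_3$.

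The step I expect to be the main obstacle is making precise the bookkeeping that the nested annuli really remain ``nested and disjoint'' under iteration, so that the combinatorial radius genuinely increases by at least one per block rather than stagnating. This requires checking that the new annulus generated by the generation graph at step $j$ lies \emph{strictly inside} (closer to the origin than) the boundary of the innermost previously-generated annulus, which follows because that new annulus is an annulus of a seed $V$ contained in the image of $\mcU$, and $\mcU$ is the innermost pattern; but one must carefully invoke the minimality statement of Propositions~\ref{prop:minannuli_Brun} and~\ref{prop:minannuli_JP} and the condition $P \cap \partial(P \cup A) = \varnothing$ from the definition of $\mcL$-annulus to see that the layers do not interpenetrate. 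A secondary technical point is verifying that the images $\SBrun_{i_1}\cdots\SBrun_{i_m}(\Gamma_\bfv)$ stay within the prescribed family of discrete planes for all the partial products involved, which is immediate from $\SBrun_i(\Gamma_\bfv) = \Gamma_{\bfM_i \bfv}$ (Proposition~\ref{prop:imgplane}) and the definition of the Brun/Jacobi-Perron algorithms — this is exactly the content of Remark~\ref{rema:arith_assump}, so it costs nothing beyond citing it.
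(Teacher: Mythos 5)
Your proposal is correct and follows essentially the same route as the paper's own proof: split the word into blocks each containing enough occurrences of $3$ (resp.\ of $3$ or $4$ in the additive Jacobi-Perron decomposition), use Lemmas~\ref{lemm:finitegraph_Brun_H} and~\ref{lemm:finitegraph_JP_H} to generate a fresh annulus of a seed per block, and propagate the previously built annuli via Proposition~\ref{prop:annulus_induction} (with Propositions~\ref{prop:cov_Brun}, \ref{prop:cov_JP}, \ref{prop:propA_Brun}, \ref{prop:propA_JP} and the minimality statements \ref{prop:minannuli_Brun}, \ref{prop:minannuli_JP}), concluding $\rad \geq R$ from $R$ nested annuli; the disjointness of the layers is exactly what the paper gets from Proposition~\ref{prop:imgplane}, as you indicate.
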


\begin{proof}[Proof of Theorem~\ref{theo:seeds}]
We fix $V \in \{\VBrun_1,\VBrun_2\}.$
The idea of the proof is that given $n$ concentric nested annuli around the initial pattern $V$,
applying dual substitutions preserves these $n$ concentric nested annuli (thanks to the annulus property),
and moreover eventually generates a new annulus around $V$ (thanks to generation graphs).
This gives $n+1$ concentric nested annuli, and repeating this reasoning yields the result.

We now formalize the above reasoning to prove Statement~\ref{seed_statement1}
of Theorem~\ref{theo:seeds} about Brun substitutions.
By virtue of Lemma~\ref{lemm:finitegraph_Brun_H},
there exists $M \geq 1$ such that
for every $(i_1, \ldots, i_n) \in \{1,2,3\}^n$
containing more than $M$ occurrences of $3$,
the pattern $\smash{\SBrun_{i_1} \cdots \SBrun_{i_n}(V)}$
contains an $\LBrun$-annulus of $\VBrun_1$ or $\VBrun_2$.

Let $R \geq 0$,
let $N = R \times M$,
and let $(i_1, \ldots, i_n) \in \{1,2,3\}^n$
containing more than $N$ occurrences of $3$.
Write $\smash{\SBrun_{i_1} \cdots \SBrun_{i_n} = \Sigma_1 \cdots \Sigma_R}$,
where each $\Sigma_k$ is a product of $\SBrun_i$ containing $\SBrun_3$ at least $M$ times.
To prove that $\rad(\Sigma_1 \cdots \Sigma_R(V)) \geq R$,
it is enough to prove that $\Sigma_1 \cdots \Sigma_R(V)$
contains $R$ nested annuli $A_1, \ldots, A_R$, that is,
annuli such that $A_1$ is an $\LBrun$-annulus of $\smash{\VBrun_j}$ for some $j \in \{1,2\}$,
and for all $k \in \{1, \ldots, R-1\}$,
$A_k$ is an $\LBrun$-annulus of $\smash{A_{k-1} \cup \cdots \cup A_1 \cup \VBrun_j}$.
Observe that $V$ is not necessarily equal to ${\VBrun_j}$.

The reasoning goes by induction.
First, $\Sigma_R(V)$ contains an $\LBrun$-annulus of $\smash{\VBrun_j}$ for some $j \in \{1,2\}$,
since $\Sigma_R$ contains at least $M$ occurrences of $\SBrun_3$, according to Lemma~\ref{lemm:finitegraph_Brun_H}.
Now suppose that for some $k \in \{1, \ldots, R-1\}$,
the pattern $\Sigma_{R-k+1} \cdots \Sigma_R(V)$
contains $k$ concentric nested annuli $A_1, \ldots, A_k$ around $\smash{\VBrun_j}$, with $j \in \{1,2\}$.
We use the annulus property for Brun substitutions:
thanks to
Proposition~\ref{prop:annulus_induction} (the annulus property),
Proposition~\ref{prop:cov_Brun} (strong covering conditions)
and Proposition~\ref{prop:propA_Brun} (Property~A),
the patterns
$\Sigma_{R-k}(A_1), \ldots, \Sigma_{R-k}(A_k)$
are $k$ concentric $\LBrun$-annuli.
Moreover, $\smash{\Sigma_{R-k}(\VBrun_j)}$ contains an $\LBrun$-annulus of $\VBrun_1$ or $\VBrun_2$.
In total this makes $k+1$ concentric annuli contained in $\Sigma_{R-k}\Sigma_{R-k+1} \cdots \Sigma_R(V)$.
They have no face in common thanks to Proposition~\ref{prop:imgplane},
so the induction step holds and Statement~\ref{seed_statement1} is proved.

Statement~\ref{seed_statement2} for Jacobi-Perron substitutions can be proved by the same reasoning.
We apply Lemma~\ref{lemm:finitegraph_Brun_H} (to generate an annulus around the pattern),
Proposition~\ref{prop:annulus_induction} (the annulus property),
Proposition~\ref{prop:cov_JP} (strong covering conditions),
and Proposition~\ref{prop:propA_JP} (Property~A).

Note that in the above proofs, we have implicitly used the fact that if a pattern contains $\mcU$
and an $\LBrun$- or $\LJP$-annulus of $\mcU$, then it must contain
one of the patterns $\VBrun_i$ or $\VJP_i$.
This is proved in Proposition~\ref{prop:minannuli_Brun} for Brun substitutions
and in Proposition~\ref{prop:minannuli_JP} for Jacobi-Perron substitutions.
\end{proof}

\paragraph{Characterizing sequences for which $\mcU$ is not a seed}

We now give characterizations of the sequences of Brun and Jacobi-Perron substitutions such that
the pattern $\mcU = \input{fig/U.tex}$ is not a seed,
that is, such that the patterns generated by iterating the substitutions from $\mcU$
do not generate an entire discrete plane.

\begin{theo}
\label{theo:bad_Brun}
Let $(i_n)_{n \geq 1} \in \{1,2,3\}^\bbN$ be a Brun-admissible sequence
and let $P_n = \SBrun_{i_1} \cdots \SBrun_{i_n}(\mcU)$ for $n \geq 1$.
Then $(P_n)_{n\geq 0}$ is an increasing sequence of patterns which are contained in the discrete plane $\Gv$,
where $\bfv \in \bbR^3_{>0}$ is the vector whose Brun expansion is $(i_n)_{n \geq 1}$.
Moreover, we have $\bigcup_{n\geq 0} P_n \varsubsetneq \Gv$
if and only if there exist $k$ such that $(i_n)_{n \geq k}$
is the labelling of an infinite backward path
$\vphantom{a^a_a}\smash{\cdots \overset{i_{k+1}}{\rightarrow} \bullet \overset{i_k}{\rightarrow} \bullet}$
in the following graph:
\begin{center}
\input{fig/graph_bad_Brun_small.tex}
\end{center}
\end{theo}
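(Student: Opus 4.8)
The plan is to establish the three assertions of the theorem in turn, relying heavily on the machinery of Sections~\ref{sect:gen} and~\ref{sect:tech_proofs}. First, the statement that $(P_n)_{n\geq 0}$ is an increasing sequence contained in $\Gv$ is essentially immediate: by Proposition~\ref{prop:imgplane}(\ref{imgplanestatement2}) and the relation $\bfM_i = \transp\bfM_{\sBrun_i}$, we have $\SBrun_i(\Gamma_\bfw) = \Gamma_{\bfM_i\bfw}$ for every admissible $\bfw$, so an easy induction using $\bfv = \bfM_{i_1}\cdots\bfM_{i_n}\bfv^{(n)}$ shows that each $P_n\subseteq\Gv$. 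Monotonicity $P_n\subseteq P_{n+1}$ follows because $\mcU\subseteq\SBrun_{i_{n+1}}(\mcU)$ (this can be read off directly from the explicit formulas for the $\SBrun_i$ given in Section~\ref{subsec:Brun}): applying $\SBrun_{i_1}\cdots\SBrun_{i_n}$ to both sides and using Proposition~\ref{prop:imgplane}(1) gives $P_n\subseteq P_{n+1}$.

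The heart of the argument is the equivalence. For the ``if'' direction, I would argue by contraposition: suppose $\bigcup_n P_n = \Gv$, and show no such backward path exists. Fix any face $f$ of $\Gv$ that appears in the graph $\GBrunmini$ (equivalently, in the four-vertex graph of Figure~\ref{fig:GBrunmini}); I claim $f$ is never in any $P_n$. This is exactly Statement~\ref{graphprop2} of Proposition~\ref{prop:gengraph}: for any backward path in the generation graph $\GBrun$ ending at $f$ and labelled $(i_n,\ldots,i_1)$, if the initial vertex $f_n\notin\mcU$ then $f_0\notin\SBrun_{i_1}\cdots\SBrun_{i_n}(\mcU)$. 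The subgraph $\GBrunmini$ was defined precisely as the part of $\GBrun$ consisting of vertices not in $\mcU$ lying at the end of an infinite backward path with infinitely many $3$'s; so if $(i_n)_{n\geq k}$ labels such a path, the face at its ``$f_0$-end'' is missing from $P_n$ for all $n$, contradicting $\bigcup_n P_n = \Gv$. One subtlety here: I need $\GBrun$ to capture \emph{all} preimages, i.e. that every face of $\Gv$ not in $\bigcup_n P_n$ arises from a backward path in $\GBrun$; this uses that $\mcX = \VBrun_1\cup\VBrun_2$ is the union of minimal annuli (Proposition~\ref{prop:minannuli_Brun}) together with the fact (Proposition~\ref{prop:gengraph}, Statement~\ref{graphprop1} and Theorem~\ref{theo:seeds}) that once an annulus of $\mcU$ is generated the whole plane follows — so if the plane is \emph{not} generated, no annulus of $\mcU$ is ever generated, hence one of the faces of every minimal annulus $\VBrun_1,\VBrun_2$ is perpetually absent, and backtracking that face through $\FBrun$-preimages stays forever outside $\mcU$, i.e. lives in $\GBrunmini$.

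For the ``only if'' direction, suppose $(i_n)_{n\geq k}$ is \emph{not} the labelling of any infinite backward path in $\GBrunmini$ for any $k$. Since $(i_n)$ is Brun-admissible it contains infinitely many $3$'s, so combining this with Lemma~\ref{lemm:balls_Brun} (and the four-vertex graph reasoning in its proof) one shows that there is some $n_0$ such that $P_{n_0}=\SBrun_{i_1}\cdots\SBrun_{i_{n_0}}(\mcU)$ contains a translate of $\VBrun_1$ or $\VBrun_2$ — equivalently, a translate of an $\LBrun$-annulus of (a translate of) $\mcU$. One then invokes the seed property: by Theorem~\ref{theo:seeds}(\ref{seed_statement1}) applied with $V$ equal to that translated seed and to the tail sequence $(i_n)_{n>n_0}$ (which still has infinitely many $3$'s), the patterns $\SBrun_{i_{n_0+1}}\cdots\SBrun_{i_n}(\VBrun_j)$ have combinatorial radius tending to infinity, hence cover arbitrarily large disks; translating back, $\bigcup_n P_n$ covers arbitrarily large disks around the origin, and since it is a subset of $\Gv$ closed under the ``fill in'' consequences of Proposition~\ref{prop:imgplane}(3), it must equal $\Gv$. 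The bookkeeping needed here — relating translated seeds inside $P_{n_0}$ to the non-translated statement of Theorem~\ref{theo:seeds}, and ensuring the tail still satisfies the hypotheses — is the most delicate part, together with verifying that ``not a backward path in $\GBrunmini$'' is exactly the complement of the language produced by the four-vertex reduction, which is where I expect the main obstacle to lie; the rest is assembling already-established pieces.
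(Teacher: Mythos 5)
Your first part (monotonicity and $P_n\subseteq\Gv$) is fine and matches the paper, and your overall skeleton (four-vertex graph $\leftrightarrow$ $\GBrunmini$, Statement~\ref{graphprop2} of Proposition~\ref{prop:gengraph} for one direction, annulus-plus-seed for the other) is the paper's. But your ``only if'' direction contains a genuine gap: you replace the key step by Lemma~\ref{lemm:balls_Brun} plus Theorem~\ref{theo:seeds} applied to a \emph{translated} copy of $\VBrun_1$ or $\VBrun_2$ found inside $P_{n_0}$. That argument never uses the hypothesis that $(i_n)_{n\geq k}$ labels no infinite backward path — Lemma~\ref{lemm:balls_Brun} only needs infinitely many $3$s — so if it were valid it would prove $\bigcup_n P_n=\Gv$ for \emph{every} Brun-admissible sequence, contradicting the theorem itself (take $(i_n)=(2311)^\infty$, Figure~\ref{fig:gen_goodbad} right and Figure~\ref{fig:rfbrun_zero}(c)). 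The failure point is the last inference: a translated seed grows into translates of patterns of large radius around its own (moving) location, which is exactly Theorem~\ref{theo:balls}; it does not give patterns of large minimal combinatorial radius \emph{centred at the origin}, and ``covers arbitrarily large disks somewhere'' does not imply equality with $\Gv$ (there is no ``fill-in'' mechanism in Proposition~\ref{prop:imgplane} forcing faces near $\mathbf 0$ to be covered). The paper instead uses the no-path hypothesis through Statement~\ref{graphprop1} of Proposition~\ref{prop:gengraph} applied to $\GBrun$: if no tail labels an infinite backward path in $\GBrunmini$, then every sufficiently long backward path labelled $i_{k+M},\ldots,i_k$ ends in $\mcU$, hence $\SBrun_{i_k}\cdots\SBrun_{i_{k+M}}(\mcU)$ contains an honest, origin-centred $\LBrun$-annulus of $\mcU$ (one of $\VBrun_1,\VBrun_2$), and only then does Theorem~\ref{theo:seeds} give $\rad\to\infty$ and $\bigcup_n P_n=\Gv$. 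Ironically, the mechanism you sketch in your ``subtlety'' remark (plane not generated $\Rightarrow$ no annulus of $\mcU$ ever appears $\Rightarrow$ some face of a minimal annulus is perpetually absent $\Rightarrow$ backtracking its preimages stays outside $\mcU$) is the right idea for relating non-generation to $\GBrunmini$, but you attach it to the other direction and do not use it where it is needed.

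Your ``if'' direction also has two smaller imprecisions. First, the face $f_0$ at the end of the infinite backward path is one of the nine specific faces listed in Figure~\ref{fig:GBrunmini}, and there is no reason it belongs to the particular plane $\Gv$ (or to $\Gamma_{\bfv^{(k-1)}}$), so ``$f_0$ missing from every $P_n$ contradicts $\bigcup_n P_n=\Gv$'' is not automatic; the paper's formulation avoids this by concluding that the tail patterns never contain an $\LBrun$-annulus of $\mcU$ and combining this with Proposition~\ref{prop:minannuli_Brun} (every admissible plane contains $\VBrun_1$ or $\VBrun_2$). Second, Statement~\ref{graphprop2} applies to the tail products $\SBrun_{i_k}\cdots\SBrun_{i_{k+n}}(\mcU)$, not to $P_n$; to transfer the conclusion through the prefix $i_1,\ldots,i_{k-1}$ one must write $P_{k+n}=\SBrun_{i_1}\cdots\SBrun_{i_{k-1}}\bigl(\SBrun_{i_k}\cdots\SBrun_{i_{k+n}}(\mcU)\bigr)$ and invoke the disjointness statement of Proposition~\ref{prop:imgplane}. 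These are fixable, but together with the flawed ``only if'' argument the proposal as written does not establish the equivalence.
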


We remark that for a \emph{finite} product of Brun substitutions $\sigma = \sBrun_{i_1} \cdots \sBrun_{i_m}$,
Theorem~\ref{theo:bad_Brun} can be interpreted in the following way:
the sequence of patterns $(\EOSS^n(\mcU))_{n \geq 1}$
does not generate an entire discrete plane if and only if there is a loop labelled by
(some power of) the word $i_m \ldots i_1$ in the graph.
This is the case for example with $i_4i_3i_2i_1 = 3332$,
but not with $i_3i_2i_1 = 332$ (it is easy to check that there is no loop labelled by a power of $332$).
Note that in some cases it is necessary to take a power of $i_m \ldots i_1$ for such a loop to exist,
for example with $i_2i_1 = 32$:
indeed, there is a loop labelled by $3232$ but no loop labelled by $32$.

\begin{proof}
The fact that $(P_n)_{n\geq 0}$ is an increasing sequence contained in $\Gv$
easily follows from Proposition~\ref{prop:imgplane}
and the fact that $\mcU$ is contained in every discrete plane.
We now prove the second part of the proposition.
First, recall that the four-vertex graph given in the statement of Theorem~\ref{theo:bad_Brun}
and the graph $\GBrunmini$ given in Section~\ref{subsec:graph_Brun}, Figure~\ref{fig:GBrunmini},
admit the same sets of infinite backward path labellings with infinitely many $3$s.
(The four-vertex graph is simply a ``reduced'' version of $\GBrunmini$ with some identified vertices.)

Assume now that there does not exist $k$ such that $(i_n)_{n \geq k}$
is the labelling of an infinite backward path
in the four-vertex graph.
Then, for every $k \geq 0$, there exists $M \geq 0$ such that for every path
$\vphantom{a^a_a}\smash{f_M \overset{i_{k+M}}{\rightarrow} f_{M-1} \cdots f_1\overset{i_k}{\rightarrow} f_0}$ in $\GBrun$,
we have $f_M \in \mcU$.
Indeed, the only vertices in $\GBrun \setminus \GBrunmini$ which are
at the end of a backward infinite path with infinitely many $3$s
are the vertices of $\mcU$.
It then follows from Statement~\ref{graphprop1} of Proposition~\ref{prop:gengraph}
that $\smash{\SBrun_{i_k} \cdots \SBrun_{i_{k+M}}(\mcU)}$
contains an $\LBrun$-annulus of $\mcU$.
By considering $k$ large enough in the above reasoning,
we can now apply Theorem~\ref{theo:seeds} with $\mcV = \{\mcU\}$ and $\mcW = \{\VBrun_1, \VBrun_2\}$
to prove that the patterns $P_n=\SBrun_{i_1} \cdots \SBrun_{i_n}(\mcU)$ have arbitrarily large combinatorial radius,
which implies that $\bigcup_{n\geq 0} P_n = \Gv$.

Conversely, if there exists $k$ such that $(i_n)_{n \geq k}$
is the labelling of an infinite backward path
in the four-vertex graph, then
Proposition~\ref{prop:gengraph}~Statement~\ref{graphprop2}
implies that the patterns $(\SBrun_{i_k} \cdots \SBrun_{i_{k + n}}(\mcU))_{n \geq 0}$
contain no $\LBrun$-annulus of $\mcU$ for any $n \geq 0$,
so the discrete plane $\Gv$ is not generated by the patterns
$(\SBrun_{i_1} \cdots \SBrun_{i_{k+n}}(\mcU))_{n \geq 1}
= (\SBrun_{i_1} \cdots \SBrun_{i_{k-1}} \SBrun_{i_k} \cdots \SBrun_{i_{k + n}}(\mcU))_{n \geq 0}$.
\end{proof}

Theorem~\ref{theo:bad_JP} below
recovers the same characterization as the one stated in~\cite{IO94},
by using a different notion of generation graphs,
and which thus does not require the same succession of lemmas as found in~\cite{IO94}.

\begin{theo}
\label{theo:bad_JP}
Let $(a_n,b_n)_{n \geq 1}$ be an admissible Jacobi-Perron sequence
and for $n \geq 1$ define $P_n = \SJP_{a_1,b_1} \cdots \SJP_{a_n,b_n}(\mcU)$.
Then $(P_n)_{n\geq 0}$ is an increasing sequence of patterns which are contained in the discrete plane $\Gv$,
where $\bfv \in \bbR^3_{>0}$ is the vector whose Jacobi-Perron expansion is $(a_n,b_n)_{n \geq 1}$.
Moreover, we have $\bigcup_{n\geq 0} P_n \varsubsetneq \Gv$
if and only if there exists $\ell \geq 1$ such that for every $k \geq 0$, we have
(1) $a_{\ell+3k} = 0$,
(2) $a_{\ell+3k+1} = b_{\ell+3k+1}$
and (3) $0 < a_{\ell+3k+2} < b_{\ell+3k+2}$.
\end{theo}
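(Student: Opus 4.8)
The plan is to follow the same two-part scheme as in the proof of Theorem~\ref{theo:bad_Brun}, adapting it to the additive decomposition of the Jacobi-Perron substitutions. First I would establish, exactly as before, that $(P_n)_{n\geq 0}$ is an increasing sequence of patterns contained in $\Gv$: this follows from Proposition~\ref{prop:imgplane} (since $\SJP_{a,b}(\Gamma_\bfv) = \Gamma_{\bfM_{a,b}\bfv}$ and $\mcU$ sits inside every discrete plane) together with Proposition~\ref{prop:JP_conv} identifying the vector $\bfv$ whose expansion is $(a_n,b_n)_{n\geq 1}$. For the main equivalence, the key step is to translate the arithmetic condition on $(a_n,b_n)_{n\geq 1}$ into a statement about infinite backward paths in the generation graph $\GJP$ constructed in Section~\ref{subsec:graph_JP}, using the additive decomposition $\sJP_{a,b} = \theta_3\theta_1^{b-1}$ (if $a=0$), $\theta_4\theta_1^{b-a}\theta_2^{a-1}$ (if $0<a<b$), $\theta_4\theta_2^{a-1}$ (if $a=b$). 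Under this decomposition, a multiplicative step with $a=0$ starts with $\theta_3$ followed by $\theta_1$'s; a step with $a=b$ is $\theta_4$ followed by $\theta_2$'s; a step with $0<a<b$ is $\theta_4$ followed by $\theta_1$'s and then $\theta_2$'s. The condition ``$a_{\ell+3k}=0$, $a_{\ell+3k+1}=b_{\ell+3k+1}$, $0<a_{\ell+3k+2}<b_{\ell+3k+2}$ for all $k\geq 0$'' thus corresponds precisely to the additive label sequence eventually lying in the periodic language $(3\,1^\star\,4\,2^\star\,4\,1^\star\,2^\star)^\infty$ read from the infinite branch of the reduced $\GJP$ shown in Section~\ref{subsec:graph_JP} — I would verify by inspecting that graph that its infinite backward paths with infinitely many $3$'s or $4$'s have exactly this shape (this is where the deletion of the edge $f_e\overset{4}{\rightarrow}f_d$, forced by the admissibility constraint $a_n=b_n\Rightarrow a_{n+1}\neq 0$, plays its role).

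Granting that dictionary, the forward direction (if the arithmetic condition holds, then $\bigcup_n P_n \subsetneq \Gv$) uses Statement~\ref{graphprop2} of Proposition~\ref{prop:gengraph}: when the tail of the additive expansion labels an infinite backward path in $\GJP$ landing on a vertex \emph{not} in $\mcU$, the patterns $\SJP_{a_k,b_k}\cdots\SJP_{a_{k+n},b_{k+n}}(\mcU)$ never contain any face of an $\LJP$-annulus of $\mcU$, hence (by Proposition~\ref{prop:minannuli_JP}, which lists the minimal such annuli) never contain a translate of any $\VJP_i$, so the discrete plane is never filled. For the converse, I would argue contrapositively: if no such $\ell$ exists, then for every $k$ there is $M$ so that every length-$M$ backward path from index $k$ in $\GJP$ terminates at a vertex of $\mcU$ (because the only vertices at the end of an infinite backward path with infinitely many $3$'s or $4$'s and not lying on the distinguished infinite branch are those of $\mcU$); by Statement~\ref{graphprop1} of Proposition~\ref{prop:gengraph} this yields an $\LJP$-annulus of $\mcU$ in $\SJP_{a_k,b_k}\cdots\SJP_{a_{k+M},b_{k+M}}(\mcU)$, and then Theorem~\ref{theo:seeds} (applied with $\mcV=\{\mcU\}$, via the minimal annuli $\VJP_1,\dots,\VJP_4$ of Proposition~\ref{prop:minannuli_JP}) gives arbitrarily large combinatorial radius, i.e. $\bigcup_n P_n = \Gv$.

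The main obstacle I anticipate is the bookkeeping in the dictionary step: one must be careful that the \emph{additive} expansion of an admissible multiplicative expansion $(a_n,b_n)$ is well-defined and that the rhythm markers $\theta_3,\theta_4$ correctly delimit multiplicative blocks (Remark~\ref{rema:JPAdditive}), and that the exceptional small-value cases ($a=0$ with $b=1$ giving the empty $\theta_1$-block, or $a=b=1$ giving the empty $\theta_2$-block) match the vertices $f_d, f_e$ and the deleted edge correctly. In particular one needs to check that a period-$3k$ structure in the \emph{multiplicative} indices corresponds to traversing the finite cycle through $f_d, f_a/f_c, f_e$ in the reduced graph, and that any deviation from this pattern forces the backward path off the infinite branch and hence (after finitely many steps) back to $\mcU$. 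Everything else — the monotonicity, the invocations of Propositions~\ref{prop:gengraph},~\ref{prop:minannuli_JP} and Theorem~\ref{theo:seeds} — is parallel to the Brun case and requires no new ideas. I would also note, as a remark after the proof, the analogue of the finite-product interpretation: a finite Jacobi-Perron product fails to generate an entire discrete plane iff the (reverse of its) additive index word labels a loop in the infinite-branch cycle of $\GJP$, recovering the characterization of~\cite{IO94}.
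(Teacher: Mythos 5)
Your proposal is correct and follows essentially the same route as the paper's proof: the paper likewise passes to the additive expansion, identifies the arithmetic condition with infinite backward paths along the cycle of $\GJP$ (path $f_c\to f_e$ for $a_n=0$, $f_d\to f_c$ for $a_n=b_n$, $f_e\to f_d$ for $0<a_n<b_n$, with the edge $f_e\overset{4}{\rightarrow}f_d$ removed by admissibility), and then concludes via the two statements of Proposition~\ref{prop:gengraph} and Theorem~\ref{theo:seeds}, exactly as in the Brun case. The bookkeeping points you flag are precisely the ``easily checked'' verifications the paper leaves implicit.
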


\begin{proof}
Let $(i_n)_{n \geq 1}$ be the additive Jacobi-Perron expansion corresponding to $(a_n,b_n)_{n \geq 1}$
(defined in Section~\ref{subsec:JP}).
Since this additive expansion is admissible, it must contain infinitely many $3$s or $4$s (see Remark~\ref{rema:JPAdditive}).
Similarly as in the proof of Theorem~\ref{theo:bad_Brun},
the infinite edge labellings of the graph $\GJP$ defined in Section~\ref{subsec:graph_JP}
give us the desired characterization, thanks to Proposition~\ref{prop:gengraph} and Theorem~\ref{theo:seeds}.

Indeed, in the graph $\GJP$,
following an infinite path containing infinitely many $3$s or $4$s
forces us to turn clockwise, visiting $f_c$, $f_e$ and $f_d$ (and possibly other states between $f_d$ and $f_e$)
cyclically and in this order (see Section~\ref{subsec:graph_JP}).
The result then follows from the definition of the additive decomposition
of $\SJP_{a,b}$ by $\Theta_1, \Theta_2, \Theta_3, \Theta_4$,
and by the following facts, which can easily be checked:
any path from $f_c$ to $f_e$ in $\GJP$ corresponds to $a_n = 0$;
any path from $f_d$ to $f_c$ in $\GJP$ corresponds to $a_n = b_n$;
any path from $f_e$ to $f_d$ in $\GJP$ corresponds to $0 < a_n < b_n$.
\end{proof}

\paragraph{Translates of seeds always occur}

The next theorem states that the initial pattern $\mcU$ is always sufficient
to generate \emph{translates} of patterns with arbitrarily large radius
(even though $\mcU$ is not a seed).
Figure~\ref{fig:gen_goodbad}~p.~\pageref{fig:gen_goodbad} illustrates an example of such a situation for Brun substitutions.

\begin{theo}
\label{theo:balls}
For every $R \geq 0$ there exists $N \geq 0$ such that
\begin{enumerate}
\item
\label{ball_statement1}
$\SBrun_{i_1} \cdots \SBrun_{i_n}(\mcU)$ contains a translate of a pattern $P$ with $\rad(P) \geq R$
for every $(i_1 \cdots i_n) \in \{1,2,3\}^n$ that contains more than $N$ occurrences of $3$;
\item
\label{ball_statement2}
$\SJP_{a_1,b_1} \cdots \SJP_{a_n,b_n}(\mcU)$ contains a translate of a pattern $P$ with $\rad(P) \geq R$
for every admissible Jacobi-Perron expansion $(a_1,b_1), \ldots, (a_n,b_n)$ such that $n \geq N$.
\end{enumerate}
\end{theo}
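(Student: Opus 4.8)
The plan is to combine the ``translates of seeds'' lemmas from Section~\ref{subsec:transseeds} with the seed property established in Theorem~\ref{theo:seeds}. The key observation is that a dual substitution $\Sigma$ commutes (up to translation) with applying further substitutions: if $\Sigma_{i_1} \cdots \Sigma_{i_k}(\mcU)$ contains a translate $\bfx + W$ of some pattern $W$, then $\Sigma_{i_1} \cdots \Sigma_{i_n}(\mcU)$ contains $\Ms^{-1}_{i_1 \cdots i_k}\bfx + \Sigma_{i_{k+1}} \cdots \Sigma_{i_n}(W)$, by Proposition~\ref{prop:imgplane}(\ref{imgplanestatement2}) and the fact that $\EOS$ distributes over unions. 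So translates are preserved (and grow) under further iteration.

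First I would prove Statement~\ref{ball_statement1}. Fix $R \geq 0$. By Theorem~\ref{theo:seeds}(\ref{seed_statement1}), there exists $N_1$ such that for every $V \in \{\VBrun_1, \VBrun_2\}$ and every $(j_1, \ldots, j_m) \in \{1,2,3\}^m$ containing more than $N_1$ occurrences of $3$, we have $\rad(\SBrun_{j_1} \cdots \SBrun_{j_m}(V)) \geq R$. By Lemma~\ref{lemm:balls_Brun}, there exists $N_2$ such that any sequence $(i_1, \ldots, i_k)$ with more than $N_2$ occurrences of $3$ yields $\SBrun_{i_1} \cdots \SBrun_{i_k}(\mcU) \supseteq \bfx + V$ for some $\bfx$ and some $V \in \{\VBrun_1, \VBrun_2\}$. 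Now set $N = N_1 + N_2$ and let $(i_1, \ldots, i_n)$ contain more than $N$ occurrences of $3$. Choose the smallest index $k$ such that $(i_1, \ldots, i_k)$ contains exactly $N_2 + 1$ occurrences of $3$; then $\SBrun_{i_1} \cdots \SBrun_{i_k}(\mcU)$ contains a translate $\bfx + V$ of some seed. The remaining suffix $(i_{k+1}, \ldots, i_n)$ contains more than $N_1$ occurrences of $3$, so $\SBrun_{i_{k+1}} \cdots \SBrun_{i_n}(V)$ has combinatorial radius at least $R$. Applying $\SBrun_{i_1} \cdots \SBrun_{i_k}$ to the inclusion $\bfx + V \subseteq \SBrun_{i_1} \cdots \SBrun_{i_k}(\mcU)$ — more precisely, using the linearity of $\EOS$ as recalled above — we conclude that $\SBrun_{i_1} \cdots \SBrun_{i_n}(\mcU)$ contains a translate of $\SBrun_{i_{k+1}} \cdots \SBrun_{i_n}(V)$, a pattern of radius $\geq R$. (One must note here that combinatorial radius, unlike the actual position, is translation invariant, and that the disk-covering property depends only on projections, so a translate of a radius-$R$ pattern still covers an $R$-disk after projection.)

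Then I would prove Statement~\ref{ball_statement2} by the same argument: apply Lemma~\ref{lemm:balls_JP} to obtain, after some bounded number $N_0$ of Jacobi-Perron steps, a translate of one of $\VJP_1, \ldots, \VJP_4$ inside $\SJP_{a_1,b_1} \cdots \SJP_{a_{N_0},b_{N_0}}(\mcU)$; then apply Theorem~\ref{theo:seeds}(\ref{seed_statement2}) to the remaining suffix. Since Jacobi-Perron-admissible sequences automatically contain infinitely many ``rhythm'' factors (Remark~\ref{rema:JPAdditive}, translated back to the multiplicative setting), the suffix after any fixed prefix is itself admissible and arbitrarily long, so Theorem~\ref{theo:seeds} applies and gives radius $\geq R$ once $n$ is large enough. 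The main obstacle — really the only subtle point — is bookkeeping the commutation between taking translates and iterating further substitutions, i.e.\ carefully invoking Proposition~\ref{prop:imgplane}(\ref{imgplanestatement2}) so that ``$\Sigma_{i_1}\cdots\Sigma_{i_k}(\mcU)$ contains a translate of $V$'' really does upgrade to ``$\Sigma_{i_1}\cdots\Sigma_{i_n}(\mcU)$ contains a translate of $\Sigma_{i_{k+1}}\cdots\Sigma_{i_n}(V)$'', and keeping track that the counting of $3$'s splits correctly between prefix and suffix; everything else is a direct combination of Theorem~\ref{theo:seeds} with Lemmas~\ref{lemm:balls_Brun} and~\ref{lemm:balls_JP}.
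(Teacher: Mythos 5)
Your argument is correct and is essentially the paper's own proof: Lemma~\ref{lemm:balls_Brun} (resp.\ Lemma~\ref{lemm:balls_JP}) yields a translate of a seed after an initial block of substitutions, Theorem~\ref{theo:seeds} applied to the remaining suffix gives combinatorial radius at least $R$, and the linearity of $\EOS$ from Proposition~\ref{prop:imgplane}~(\ref{imgplanestatement2}) upgrades this to a translate inside the full image, which is exactly how the paper concludes. The only quibble is an off-by-one in your Brun counting (the suffix is only guaranteed at least $N_1$, not more than $N_1$, occurrences of $3$), harmlessly fixed by taking $N = N_1 + N_2 + 1$.
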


\begin{proof}
Since we are dealing with \emph{translates} of seeds,
we cannot use generation graphs like in the previous theorems,
we must track ``by hand'' all the possible sequences of iterations starting from $\mcU$,
and explicitly check that a translate of one of the seeds of Theorem~\ref{theo:seeds} is eventually generated.
This was done in detail in Lemma~\ref{lemm:balls_Brun} for Statement~\ref{ball_statement1}
and in Lemma~\ref{lemm:balls_JP} for Statement~\ref{ball_statement2}.
The result then follows directly from Theorem~\ref{theo:seeds}:
once a translate of a seed occurs,
its images are translates of patterns with arbitrarily large minimal combinatorial radius.
(Note that by ``linearity'' of $\EOS$ dual substitutions (Proposition~\ref{prop:imgplane}~\ref{imgplanestatement2}),
the image of a translate of a pattern is always a translate of its image.)
\end{proof}

\subsection{Topological properties of Rauzy fractals for products of substitutions}
\label{subsec:applis_topo}

Before stating the results of this section,
we stress the fact that there are many connections between topological properties of Rauzy fractals
(such as being connected or having zero interior point)
and number-theoretical properties of the associated dominant Pisot eigenvalue.
This is described in more detail in Section~\ref{subsec:applis_nt}.

\paragraph{Zero interior point}
We consider a further interpretation of a
discrete plane generation property in terms of Rauzy fractals.
More precisely, the origin is an interior point of the Rauzy fractal of $\sigma$
if and only if the patterns $\EOS(\sigma)^n(\mcU)$
generate patterns with arbitrarily large minimal combinatorial radius centered at the origin
(see~\cite{BS05} or~\cite{SieT09}).
An immediate consequence is the following theorem
for Brun and Jacobi-Perron substitutions
(obtained thanks to Theorem~\ref{theo:bad_Brun} and Theorem~\ref{theo:bad_JP}),
which is illustrated on some finite products of Brun substitutions in Figure~\ref{fig:rfbrun_zero}~p.~\pageref{fig:rfbrun_zero}.

\begin{coro}
\label{coro:fractal_zero}
We have:
{
\makeatletter
\@beginparpenalty=10000
\makeatother
\begin{itemize}
\item
The origin is \textbf{not} an interior point of
the Rauzy fractal of an admissible product $\sBrun_{i_1} \cdots \sBrun_{i_n}$
if and only if there exists a cycle labelled by $i_1, \ldots, i_n$
in the directed graph given in Theorem~\ref{theo:bad_Brun}.
\item
The origin is \textbf{not} an interior point of
the Rauzy fractal of an admissible product $\sJP_{a_1,b_1} \cdots \sJP_{a_n,b_n}$
if and only if the infinite sequence $((a_1, b_1), \ldots, (a_n,b_n))^\infty$
satisfies the condition given in Theorem~\ref{theo:bad_JP}.
\end{itemize}
}
\end{coro}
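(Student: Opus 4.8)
The plan is to combine two facts recalled earlier. First, for a unimodular irreducible Pisot substitution $\sigma$, the origin is an interior point of the Rauzy fractal $\mcT_\sigma$ if and only if the patterns $\EOS(\sigma)^m(\mcU)$ have minimal combinatorial radius tending to infinity (the criterion from~\cite{BS05,SieT09} stated above). Second, Theorems~\ref{theo:bad_Brun} and~\ref{theo:bad_JP} describe exactly when the iterates of $\mcU$ under an admissible \emph{infinite} sequence of Brun, resp.\ Jacobi-Perron, substitutions fill up the ambient discrete plane. The corollary will follow by applying these theorems to the periodic infinite expansion determined by the given finite product.

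For the Brun part, let $\sigma = \sBrun_{i_1}\cdots\sBrun_{i_n}$ be admissible and let $(j_k)_{k\geq 1}$ be the periodic sequence attached to $\sigma$ through the identity $\EOS(\sigma)=\SBrun_{i_n}\circ\cdots\circ\SBrun_{i_1}$ (Proposition~\ref{prop:imgplane}), chosen so that $\SBrun_{j_1}\circ\cdots\circ\SBrun_{j_{mn}}=\EOS(\sigma)^m$ for all $m$. Admissibility of $\sigma$ (some $i_k=3$, Proposition~\ref{prop:Brun_exp}) makes $(j_k)_{k\geq 1}$ Brun-admissible, so by Theorem~\ref{theo:bad_Brun} the patterns $P_m=\SBrun_{j_1}\cdots\SBrun_{j_m}(\mcU)$ form an increasing sequence contained in a single discrete plane $\Gv$, and $\EOS(\sigma)^m(\mcU)=P_{mn}$. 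Because the $P_m$ are nested and all lie in $\Gv$, the minimal combinatorial radius of the $\EOS(\sigma)^m(\mcU)$ tends to infinity if and only if $\bigcup_{m\geq 0}P_m=\Gv$: any finite combinatorial ball of $\Gv$ around $\mathbf 0$ is contained in some $P_m$, and conversely an unbounded radius forces the union to exhaust $\Gv$. Combining with the criterion from~\cite{BS05,SieT09}, the origin fails to be an interior point of $\mcT_\sigma$ exactly when $\bigcup_{m\geq 0}P_m\varsubsetneq\Gv$, which by the second part of Theorem~\ref{theo:bad_Brun} means that some tail of $(j_k)_{k\geq 1}$ labels an infinite backward path in the four-vertex graph. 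Since $(j_k)$ is periodic and that graph is finite, a pigeonhole argument shows this is equivalent to the existence of a closed path in the graph whose edge labelling, read cyclically, is $i_1,\ldots,i_n$ (possibly going around the fundamental cycle several times, exactly as in the remark following Theorem~\ref{theo:bad_Brun}). This is the first bullet.

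The Jacobi-Perron part is entirely parallel and in fact lighter. Given an admissible product $\sJP_{a_1,b_1}\cdots\sJP_{a_n,b_n}$, its periodic extension $((a_1,b_1),\ldots,(a_n,b_n))^\infty$ is a Jacobi-Perron-admissible sequence (conditions (1)--(3) of admissibility hold cyclically, in particular $a_n=b_n$ forces $a_1\neq 0$), so Theorem~\ref{theo:bad_JP} applies to it. The same nesting observation as above identifies ``the origin is not an interior point of $\mcT_\sigma$'' with ``$\bigcup_m P_m\varsubsetneq\Gv$ for this periodic sequence'', and Theorem~\ref{theo:bad_JP} rewrites the latter as the condition on the $a_{\ell+3k},\,a_{\ell+3k+1},\,b_{\ell+3k+1},\,a_{\ell+3k+2},\,b_{\ell+3k+2}$ stated there, now read along $((a_1,b_1),\ldots,(a_n,b_n))^\infty$. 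This is precisely the second bullet, so here the translation step is essentially tautological.

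The one point requiring care is the combinatorial bookkeeping in the Brun case: one must respect the orientation conventions of Section~\ref{subsec:graph_Brun} (edges of generation graphs point from preimages to images), keep track of the order reversal between $\sigma=\sBrun_{i_1}\cdots\sBrun_{i_n}$ and the sequence $(j_k)$ realizing $\EOS(\sigma)^m$, and remember that a periodic labelling need not be realized by a single pass around a cycle but only by some power of it --- which is exactly why the statement is phrased in terms of a cycle carrying $i_1,\ldots,i_n$ cyclically rather than a literal closed walk of length $n$. All of this is already made explicit in the remark after Theorem~\ref{theo:bad_Brun}, so no new ideas are needed beyond that discussion.
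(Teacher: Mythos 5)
Your proof is correct and takes essentially the same route as the paper: the corollary is obtained there immediately by combining the interior-point criterion of~\cite{BS05,SieT09} (the origin is interior to $\mcT_\sigma$ iff the patterns $\EOS(\sigma)^m(\mcU)$ have unbounded minimal combinatorial radius) with Theorems~\ref{theo:bad_Brun} and~\ref{theo:bad_JP} applied to the periodic expansion attached to the finite product. Your additional bookkeeping (nested patterns exhausting $\Gv$, the composition-order reversal from Proposition~\ref{prop:imgplane}, cyclic admissibility of the periodized Jacobi-Perron sequence, and the ``power of the cycle'' caveat from the remark after Theorem~\ref{theo:bad_Brun}) simply makes explicit what the paper leaves implicit.
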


Note that in the case of Arnoux-Rauzy substitutions,
the seed $\mcU$ always generates an entire discrete plane when iterating an admissible sequence,
so for Arnoux-Rauzy substitutions the origin is always an interior point of the Rauzy fractal~\cite{BJS12}.
Also note that the fact that the origin is an interior point of the Rauzy fractal
implies pure discrete spectrum of the associated dynamical system \cite{SieT09,CANT}.
It also has interpretations in number-theoretical terms, which will be discussed in
Sections~\ref{subsec:applis_dyn} and~\ref{subsec:applis_nt}.

\begin{figure}[!ht]%
\centering
\subfloat[][$\sBrun_2\sBrun_3\sBrun_2$]{%
    \label{subfig:rfbruna}%
    \includegraphics[height=29mm]{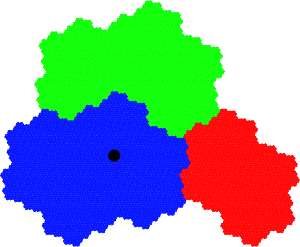}
}
\hfil
\subfloat[][$\sBrun_2\sBrun_3\sBrun_1\sBrun_1$]{%
    \label{subfig:rfbrunb}%
    \includegraphics[height=29mm]{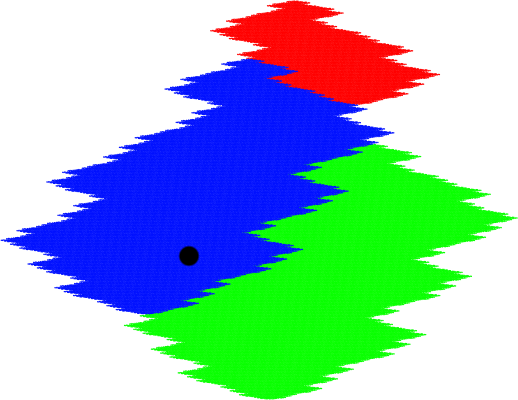}
}
\hfil
\subfloat[][$\sBrun_1\sBrun_1\sBrun_3\sBrun_2$]{%
    \label{subfig:rfbrunc}%
    \includegraphics[height=29mm]{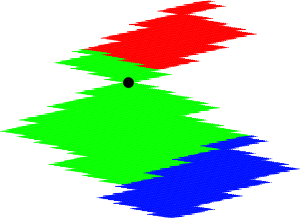}
}
\caption[]{%
Some Rauzy fractals of products of Brun substitutions.
Only~\subref{subfig:rfbrunc}
does not have the origin as an interior point.
An example of a product of substitutions whose Rauzy fractal has zero as an interior point
but whose reverse product does not is provided by \subref{subfig:rfbrunb} (and its reverse product~\subref{subfig:rfbrunc}).
The fractals of \subref{subfig:rfbruna} and \subref{subfig:rfbrunc}
correspond to the two examples of substitutions shown in Figure~\ref{fig:gen_goodbad}~p.~\pageref{fig:gen_goodbad}.
}
\label{fig:rfbrun_zero}%
\end{figure}

\paragraph{Connectedness}
The following result states that the Rauzy fractals of the substitutions
in the families we consider are all connected.
It has been proved in the case of Arnoux-Rauzy substitutions using similar tools~\cite{BJS13}.
One interest of such a topological property comes from the fact
that the Rauzy fractal provides a geometric realization for the two dynamics
that act on $(X_{\sigma}, S)$, namely the dynamics of the shift
(the toral translation in the case of pure discrete spectrum), but also the
dynamics provided by the (positive entropy) action of the substitution.
Indeed, the subtiles of the Rauzy fractals
form a Markov partition for the toral automorphism given by the incidence matrix of the substitution
(in case of pure discrete spectrum).
See Corollary~\ref{coro:dynprod} and~\ref{coro:markov} below for more detail.

\begin{coro}
\label{coro:fractal_connected}
Let $\sigma$ be an admissible finite product of Brun or Jacobi-Perron substitutions.
The Rauzy fractal $\mcT_\sigma$ and its subtiles $\mcT_\sigma(1)$, $\mcT_\sigma(2)$, $\mcT_\sigma(3)$
 are all connected.
\end{coro}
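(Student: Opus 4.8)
The plan is to deduce connectedness of $\mcT_\sigma$ and its subtiles from the discrete plane generation results, in particular from Theorem~\ref{theo:seeds} and Theorem~\ref{theo:balls}, which guarantee that iterating the dual substitutions produces patterns with arbitrarily large minimal combinatorial radius. The standard approach (used in~\cite{BJS13} for Arnoux-Rauzy substitutions) is to work at the combinatorial level: show that the patterns $\EOSS^n(\mcU)$ (or translates of patterns $\EOSS^n(V)$ for a seed $V$) are edge-connected for all $n$, and that edge-connectedness of these approximating patterns passes to the limit and yields connectedness of the Rauzy fractal. Since the Rauzy fractal $\mcT_\sigma$ is the Hausdorff limit of the renormalized projections $\mcD_n = \Ms^n \circ \pic \circ \EOSS^n(\mcU)$, and a Hausdorff limit of a sequence of connected compact sets is connected, it suffices to establish connectedness of each $\mcD_n$, which follows once we know each $\EOSS^n(\mcU)$ is an edge-connected union of faces (the projection $\pic$ of an edge-connected pattern is connected, since two faces sharing an edge project to two sets with nonempty intersection).

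The key steps I would carry out, in order, are the following. First, fix an admissible finite product $\sigma = \sBrun_{i_1}\cdots\sBrun_{i_n}$ (or the Jacobi-Perron analogue) and consider the periodic infinite sequence $(\sigma)^\infty$, so that Theorem~\ref{theo:seeds} and Theorem~\ref{theo:balls} apply. Second, prove a combinatorial lemma: if $P$ is an edge-connected pattern contained in a discrete plane of the relevant admissible family, then $\Sigma(P)$ is edge-connected for $\Sigma = \SBrun_i$ or $\SJP_{a,b}$; this is a routine verification, since $\Sigma$ maps each face to an edge-connected pattern (immediate from the explicit images given in Section~\ref{subsec:Brun} and~\ref{subsec:JP}) and preserves adjacency of faces appropriately — in fact, this is essentially already contained in the covering machinery, as $\mcL$-coverings of strongly covered patterns are edge-connected. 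Third, conclude by induction that $\EOSS^k(\mcU)$ is edge-connected for every $k$ (the base case $\mcU$ being an edge-connected three-face pattern), hence each $\mcD_k$ is connected, hence $\mcT_\sigma = \lim \mcD_k$ is connected. Fourth, for the subtiles $\mcT_\sigma(i)$, repeat the argument with $\EOSS^k([\mathbf 0,i]^\star)$: one must check that $\Sigma([\mathbf 0,i]^\star)$ is edge-connected for each type $i$ and each of the substitutions in the family (again immediate from the explicit formulas), and that edge-connectedness is preserved under iteration restricted to the subtile, so that each $\mcD_k(i) = \Ms^k \circ \pic \circ \EOSS^k([\mathbf 0,i]^\star)$ is connected, giving connectedness of $\mcT_\sigma(i)$ in the Hausdorff limit.

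The main obstacle I anticipate is the passage to the Hausdorff limit: connectedness is preserved under Hausdorff limits of compact subsets of $\bbR^2$ only when the sequence is suitably controlled, and one must argue carefully that $\lim \mcD_k$ is genuinely connected rather than merely a limit of connected sets with shrinking connecting ``necks'' that disappear — but this is handled by the classical fact that a Hausdorff limit of a sequence of compact connected sets is compact and connected (no extra hypotheses are needed here since all $\mcD_k$ are uniformly bounded, being subsets of the bounded set $\mcT_\sigma$ in the limit, and the limit exists). A secondary subtlety is that for the subtiles one needs edge-connectedness of the iterated images $\EOSS^k([\mathbf 0,i]^\star)$ which a priori are patterns that need not contain $\mcU$, so the combinatorial radius arguments of Theorem~\ref{theo:seeds} do not directly apply; however connectedness (as opposed to growth of radius) only requires the inductive edge-connectedness lemma, which does not depend on containing $\mcU$. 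I would therefore isolate a clean statement: \emph{for $\Sigma \in \{\SBrun_1,\SBrun_2,\SBrun_3\} \cup \{\SJP_{a,b}\}$ and any edge-connected pattern $P$ in an admissible discrete plane, $\Sigma(P)$ is edge-connected}, prove it by the finite check on images of single faces plus an adjacency-preservation argument, and then the corollary follows by the induction-and-limit scheme above applied to $\mcU$ and to each $[\mathbf 0,i]^\star$.
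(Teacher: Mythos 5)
Your overall scaffolding --- prove the polygonal approximants $\mcD_n$ are connected and invoke the fact that a Hausdorff limit of connected compact sets is connected, treating the subtiles by the same argument applied to the iterates of $[\mathbf 0,i]^\star$ --- is exactly the paper's. The gap is the combinatorial lemma you isolate at the end: it is \emph{false} that for $\Sigma\in\{\SBrun_1,\SBrun_2,\SBrun_3\}\cup\{\SJP_{a,b}\}$ the image of an edge-connected pattern contained in an admissible discrete plane is edge-connected; adjacency of faces is simply not preserved by these dual substitutions. Concretely, $P=[\mathbf 0,1]^\star\cup[\mathbf 0,3]^\star$ is edge-connected and lies in $\mcU$, hence in every admissible discrete plane, but $\SBrun_1([\mathbf 0,1]^\star)=[\mathbf 0,1]^\star$ while $\SBrun_1([\mathbf 0,3]^\star)=[(0,1,0),3]^\star$, so $\SBrun_1(P)$ is held together only at the single point $(0,1,0)$; applying $\SBrun_1$ once more sends these two faces to $[\mathbf 0,1]^\star$ and $[(0,2,0),3]^\star$, which are disjoint. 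So the induction cannot be run on plain edge-connectedness, and the ``finite check on single faces plus adjacency preservation'' has no chance of succeeding.

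This failure is precisely why the paper (following Ito--Ohtsuki) replaces edge-connectedness by the finer invariant of being $\mcL$-covered: Proposition~\ref{prop:coverprop} together with the specific verifications in Propositions~\ref{prop:cov_Brun} and~\ref{prop:cov_JP} show that $\mcL$-coveredness is preserved under the Brun and Jacobi-Perron dual substitutions (for patterns avoiding a few configurations excluded by the arithmetic admissibility), and since every pattern of $\LBrun$ and $\LJP$ is connected, an $\mcL$-covered pattern is path-connected; connectedness of the approximants, and hence of $\mcT_\sigma$ in the Hausdorff limit, follows. Your parenthetical appeal to ``the covering machinery'' does not repair your lemma, because those propositions take an $\mcL$-covered pattern as input and return $\mcL$-coveredness, not edge-connectedness of images of arbitrary edge-connected patterns. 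The same point affects your treatment of the subtiles: a single face $[\mathbf 0,i]^\star$ is not $\mcL$-covered at all (every pattern of $\mcL$ has at least two faces), so the correct statement --- and the one the paper uses --- is that the iterates of $[\mathbf 0,i]^\star$ are \emph{eventually} $\mcL$-covered, which must be checked directly for the first few iterations rather than deduced from an adjacency-preservation induction. With $\mcL$-coveredness substituted for edge-connectedness, the rest of your argument (projection, renormalization, Hausdorff limit, which indeed needs no extra hypotheses beyond compactness and boundedness) goes through as in the paper.
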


Note that such fractals are not necessarily simply connected.
This raises the question of characterizing the admissible finite product of Brun or Jacobi-Perron substitutions
that yield a simply connected fractal, which is an interesting and seemingly difficult question.

\begin{proof}
Let $\sigma$ be an admissible product of Brun or a Jacobi-Perron substitutions.
Thanks to Propositions~\ref{prop:cov_Brun} and Proposition~\ref{prop:cov_JP},
the patterns $\EOS(\sigma)^n(\mcU)$ are all $\LBrun$-connected (or respectively $\LJP$-connected).
Let $\mcL$ stand for $\LBrun$ or for $\LJP$ according to the fact that $\sigma$ is
an admissible product of Brun or a Jacobi-Perron substitutions.
Since $\mcL$ consists of connected patterns only,
the definition of $\mcL$-coverings implies that every $\mcL$-covered pattern is path-connected.
It follows that the patterns $\EOS(\sigma)^n(\mcU)$ are connected for every $n \geq 0$.
Consequently, since the Rauzy fractal $\mcT_\sigma$ of $\sigma$ is a Hausdorff limit of connected sets, it is also connected.
The same reasoning yields the connectedness of $\mcT_\sigma(1)$, $\mcT_\sigma(2)$, $\mcT_\sigma(3)$,
since it can easily be checked that the images of $[\mathbf 0,1]^\star$, $[\mathbf 0,2]^\star$, $[\mathbf 0,3]^\star$
are all eventually $\mcL$-connected.
\end{proof}

\subsection{Dynamical properties of products of substitutions}
\label{subsec:applis_dyn}

The tools developed in Section~\ref{sect:gen}
enable us to establish dynamical properties of the infinite product families
consisting of arbitrary finite products of Brun or Jacobi-Perron substitutions.
This will be done in particular in Corollary~\ref{coro:dynprod} below.
Even though our tools are not fully algorithmic,
they can be seen as an extension of the algorithms used in the study of a \emph{single}
substitution, where many algorithms have been developed based on the constructions of graphs
which are proved to be finite thanks to the Pisot property (see~\cite{SieT09,CANT,AL11}).

\begin{coro}
\label{coro:dynprod}
Let $\sigma$ be an admissible finite product $\sigma$ of Brun or Jacobi-Perron substitutions.
Then the system $(X_\sigma,S)$ has pure discrete spectrum.
Furthermore, the Rauzy fractal of $\sigma$ (and its subtiles)
provide the domain and partition of a symbolic natural coding of this translation.
These subtiles are moreover connected bounded remainder sets for this translation.
\end{coro}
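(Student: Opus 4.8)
The plan is to reduce everything to the already-established discrete plane generation results, via the characterization of pure discrete spectrum in terms of the arbitrarily large disks covering property. First I would recall the setup: by Proposition~\ref{prop:Brun_exp} (resp.\ Proposition~\ref{prop:JP_exp}), an admissible finite product $\sigma = \sBrun_{i_1} \cdots \sBrun_{i_n}$ (resp.\ $\sJP_{a_1,b_1} \cdots \sJP_{a_n,b_n}$) is unimodular irreducible Pisot. The key geometric input is Theorem~\ref{theo:balls}: applied to the periodic sequence $(i_1 \cdots i_n)^\infty$ (resp.\ $((a_1,b_1) \cdots (a_n,b_n))^\infty$), which is still admissible and contains $3$ (resp.\ a letter $3$ or $4$ in its additive decomposition) infinitely often, it gives that $\EOS(\sigma)^k(\mcU) = \SBrun_{i_1} \cdots \SBrun_{i_{nk}}(\mcU)$ contains translates of patterns of arbitrarily large minimal combinatorial radius as $k \to \infty$. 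By the remark following the definition of minimal combinatorial radius, this means the sequence $(\EOS(\sigma)^k(\mcU))_{k}$ covers arbitrarily large disks (in the antidiagonal projection). The characterization of~\cite{IR06} recalled in the introduction then yields that $(X_\sigma, S)$ has pure discrete spectrum. (One should note here that $\mcU$ not being a seed for $\sigma$ does not matter: translates of large disks suffice for the covering property, since the relevant projections cover large disks up to translation, and that is exactly what the~\cite{IR06} criterion requires.)

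Next I would address the symbolic natural coding statement. Since $\sigma$ is unimodular irreducible Pisot and (being a product of substitutions each of whose first and last letters are controlled) satisfies the strong coincidence condition—this follows from the pure discrete spectrum just established, or can be checked directly—the domain exchange $(\mcT_\sigma, E)$ on the subtiles $\mcT_\sigma(i)$ is measure-theoretically conjugate to $(X_\sigma, S)$, as recalled in Section~\ref{subsec:dynamics}. Pure discrete spectrum is equivalent to the statement that the $\pic(\bbZ^2)$-translates of the subtiles tile the contracting plane; composing the domain exchange conjugacy with the factor map onto the torus $\bbT^2 = (\text{contracting plane})/\pic(\bbZ^2)$ then gives that $(X_\sigma, S)$ is measure-theoretically conjugate to a toral translation $R_\alpha$ on $\bbT^2$, where $\alpha$ is the image of $\pic(\bfe_1 - \bfe_2)$ (or an appropriate difference vector), and the Rauzy fractal $\mcT_\sigma$, partitioned into the $\mcT_\sigma(i)$, is a fundamental domain on which $R_\alpha$ restricts, on each atom $\mcT_\sigma(i)$, to the translation by $\pic(\bfe_i)$ modulo the lattice. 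This is precisely a symbolic natural coding in the sense defined in Section~\ref{subsec:dynamics}, with domain $\mcT_\sigma$ and partition $\{\mcT_\sigma(i)\}$.

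For the bounded remainder set claim, I would argue as follows. Under the conjugacy to the toral translation, each subtile $\mcT_\sigma(i)$ corresponds, via the natural coding, to the cylinder $[i] = \{u \in X_\sigma : u_0 = i\}$. Its indicator function is continuous on $X_\sigma$ (cylinders are clopen), and for a primitive substitution the Birkhoff sums of $\mathbf{1}_{[i]}$ over the coding of a point are controlled by the abelianization: reading a word $u_0 \cdots u_{N-1}$ contributes $|u_0 \cdots u_{N-1}|_i$, and since the (unimodular, Pisot) incidence matrix has all non-dominant eigenvalues of modulus $<1$, the discrepancy $\big| |u_0 \cdots u_{N-1}|_i - N\, \mu([i]) \big|$ is bounded uniformly in $N$ and in the point. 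Hence each $\mcT_\sigma(i)$ is a bounded remainder set for $R_\alpha$. Connectedness of these subtiles is Corollary~\ref{coro:fractal_connected}.

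The main obstacle, and the step requiring the most care, is the passage from Theorem~\ref{theo:balls} (which only guarantees \emph{translates} of large patterns) to the covering property needed for the~\cite{IR06} criterion, together with verifying that the criterion does apply in our exact setting (the $\bbZ$-action of the shift, not the tiling flow; but these are equivalent for irreducible Pisot substitutions by~\cite{Clark-Sadun:03}). I would also need to be slightly careful that the strong coincidence condition holds so that $(\mcT_\sigma, E) \cong (X_\sigma, S)$; this is where one invokes either the already-proved pure discreteness (which implies strong coincidence) or a direct combinatorial check on products of Brun / Jacobi-Perron substitutions. Everything else is a citation of standard facts about Rauzy fractals recalled in Section~\ref{subsec:dynamics}, plus Corollary~\ref{coro:fractal_connected} for connectedness.
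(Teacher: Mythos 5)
Your proposal is correct and follows essentially the same route as the paper's proof: Theorem~\ref{theo:balls} combined with the criterion of~\cite{IR06} (translates of patterns of arbitrarily large minimal combinatorial radius suffice) gives pure discrete spectrum and the conjugacy to a toral translation, the standard Rauzy-fractal facts of Section~\ref{subsec:dynamics} give the symbolic natural coding, Corollary~\ref{coro:fractal_connected} gives connectedness, and the bounded remainder property comes from bounded symbolic discrepancy of irreducible Pisot substitutive words (the paper simply cites~\cite{Adamdis} where you sketch the Pisot-eigenvalue argument). One small slip worth fixing: the translation vector on $\bbT^2$ is the common class of $\pic(\bfe_i)$ modulo the lattice generated by the $\pic(\bfe_i-\bfe_j)$, not the image of $\pic(\bfe_1-\bfe_2)$ (which lies in the lattice and so is trivial on the torus); your subsequent description of the restriction to each subtile as translation by $\pic(\bfe_i)$ is the correct one.
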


\begin{proof}
This theorem follows from Theorem~\ref{theo:balls} together with~\cite{IR06},
where it is proved that $(X_\sigma,S)$ is measure-theoretically conjugate to a toral translation
if the sequence $(\EOS(\sigma)^n(\mcU))_n$ satisfies the arbitrarily large disks covering property,
that is, if it contains translates of patterns
with arbitrarily large minimal combinatorial radius.
One then deduces that $(X_\sigma,S)$ is a symbolic natural coding of this translation,
with the Rauzy fractal $\mcT_\sigma$ together with its subtiles
providing the fundamental domain and its partition
(the translation vectors $\pic (\bfe_i)$ of the exchange of pieces $(\mcT_\sigma, E)$
described in Section~\ref{subsec:dynamics} provide the restriction of the translation
to the fundamental domain $\mcT_\sigma$).
Connectedness is a consequence of Corollary~\ref{coro:fractal_connected}.

It remains to check that the subtiles of the Rauzy fractal provide bounded remainder sets.
A bi-infinite word $u \in \mcA^{\mathbb Z}$ is said to have bounded discrepancy if there exist $f_i$, $i \in \mcA$,
and $C>0$ such that
$\limsup_{i \in \mcA, N \in \bbN} | | u_{-N} \cdots u_0 u_1 \ldots u_N | _i -(2N+1) f_i| \leq C$.
If a coding $u$ of a minimal translation is
such that its elements have bounded symbolic discrepancy,
then the elements of the associated partition are bounded remainder sets.
We conclude by noticing that bi-infinite words in $(X_{\sigma}, S)$
have bounded symbolic discrepancy since $\sigma$ is an irreducible Pisot substitution (see e.g.~\cite{Adamdis}).
\end{proof}

\paragraph{Markov partition for toral automorphisms}
Markov partitions for the toral automorphisms $(\bbT^3, \bfM)$ are known to exist in the case
where $\bfM$ is hyperbolic (no eigenvalues lie on the unit circle)~\cite{Bow08,Manning}.
There is a general way to construct them explicitly in dimension two with rectangles~\cite{AW70,Adl98},
but no such constructions are known for dimensions $d \geq 3$.
Indeed a result of Bowen~\cite{Bow78} states that such partitions must have fractal boundary if $d \geq 3$,
which reduces the hopes of finding explicit Markov partitions with a simple shape.

In the particular case where $\bfM$ is the incidence matrix of a Pisot unimodular substitution
(or the companion matrix of a unit Pisot number $\beta$ in the framework of beta-numeration),
Rauzy fractals provide a way to construct explicit Markov partitions for $(\bbT^n, \bfM)$.
Such an approach was initiated in~\cite{IO93,KV98,Pra99}
and was proved to be successful in the Pisot case~\cite{IR06,Sie00},
provided that the associated Rauzy fractal satisfies some tiling properties
(which are equivalent to pure discrete spectrum of the associated symbolic dynamical system).

Consequently, another dynamical application of Theorems~\ref{theo:seeds} and~\ref{theo:balls}
is that for every matrix $\bfM$
which is the incidence matrix of an admissible product of Brun or Jacobi-Perron substitutions,
an explicit Markov partition can be constructed for the toral automorphisms $(\bbT^3, \bfM)$.

\begin{coro}
\label{coro:markov}
Let $\sigma$ be an admissible finite product $\sigma$ of Brun or Jacobi-Perron substitutions.
Then, there exists a three-set partition of the torus $\bbT^3$ with connected elements
which is a Markov partition for the toral automorphism $(\bbT^3, \bfM_\sigma)$
provided by the incidence matrix $\Ms$ of $\sigma$.
\end{coro}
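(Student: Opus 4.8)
\textbf{Proof proposal for Corollary~\ref{coro:markov}.}

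The plan is to deduce the existence of the Markov partition directly from the pure discrete spectrum property established in Corollary~\ref{coro:dynprod}, combined with the general machinery relating Rauzy fractals to Markov partitions that was surveyed in the paragraph preceding the statement (see~\cite{IO93,KV98,Pra99,IR06,Sie00}). Let $\sigma$ be an admissible finite product of Brun or Jacobi-Perron substitutions. By Proposition~\ref{prop:Brun_exp} (resp.\ Proposition~\ref{prop:JP_exp}), such a $\sigma$ is unimodular and irreducible Pisot, so its incidence matrix $\Ms$ has a dominant Pisot eigenvalue $\beta > 1$ and two conjugate eigenvalues of modulus smaller than $1$. In particular $\Ms$ is hyperbolic on $\bbT^3$, so $(\bbT^3, \Ms)$ is a toral automorphism to which the Rauzy-fractal construction of Markov partitions applies.

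First I would invoke Corollary~\ref{coro:dynprod} to obtain that $(X_\sigma, S)$ has pure discrete spectrum, or equivalently (by~\cite{IR06}) that the subtiles $\mcT_\sigma(i)$, $i \in \mcA$, of the Rauzy fractal tile the contracting plane of $\Ms$ periodically under the lattice $\pic(\bbZ^3)$. This tiling property is precisely the hypothesis needed in~\cite{IR06,Sie00} (see also~\cite{Pra99}) to transport the self-affine structure of the Rauzy fractal back to the torus: one forms, for each $i$, the ``box'' $B_i$ obtained as the product of the subtile $\mcT_\sigma(i)$ (in the contracting direction) with a segment of length $\pic$-dual to $\bfe_i$ (in the expanding direction), projects these boxes to $\bbT^3$, and checks that the resulting three-set partition $\{R_1, R_2, R_3\}$ is a Markov partition for $(\bbT^3, \Ms)$. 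The Markov property — that $\Ms(R_i)$ meets $R_j$ in a ``full'' way along the stable/unstable foliations — follows from the set equation $\EOSS(\mcT_\sigma(i)) = \bigcup_j \{ \text{pieces of } \mcT_\sigma(j) \}$ satisfied by the subtiles, together with the tiling property; this is exactly the argument carried out in~\cite{IR06} and I would simply cite it, adapting nothing beyond noting that our $\sigma$ falls in its scope (unimodular, irreducible Pisot, pure discrete spectrum). Finally, connectedness of the atoms $R_i$ is inherited from connectedness of the subtiles $\mcT_\sigma(i)$, which is Corollary~\ref{coro:fractal_connected}: each $R_i$ is a product of a connected set with an interval, hence connected, and its image in $\bbT^3$ under the (continuous) projection is connected as well.

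The main obstacle, such as it is, is not a genuine mathematical difficulty but a matter of correctly quoting the general theorem: one must ensure that the hypotheses of~\cite{IR06,Sie00} are met verbatim — in particular, that pure discrete spectrum is the right reformulation of the tiling condition, and that the irreducibility of $\sigma$ is what guarantees the boxes $B_i$ are genuine fundamental domains for the expanding direction rather than overlapping. Since Corollary~\ref{coro:dynprod} already supplies pure discrete spectrum for \emph{every} admissible finite product, and irreducibility is built into admissibility via Propositions~\ref{prop:Brun_exp} and~\ref{prop:JP_exp}, no further verification specific to the Brun or Jacobi-Perron families is required, and the corollary follows.
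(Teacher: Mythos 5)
Your proposal is correct and follows essentially the same route as the paper: the paper also deduces the corollary by combining Corollary~\ref{coro:dynprod} (pure discrete spectrum) with the Rauzy-fractal-to-Markov-partition constructions of~\cite{IO93,Pra99,Sie00}, and obtains connectedness of the atoms from Corollary~\ref{coro:fractal_connected}. The extra detail you give on the box construction is a faithful expansion of what the cited references provide, not a different argument.
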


This result is a consequence of the results from~\cite{IO93,Pra99,Sie00}
(which establish links between the Rauzy fractal of $\sigma$ and Markov partitions of $\bfM_\sigma$),
together with Corollary~\ref{coro:dynprod}.
A similar result for Arnoux-Rauzy substitutions is stated in~\cite{BJS13}.
Also note that the connectedness of the domains of the Markov partition in question
is due to the connectedness of Rauzy fractals (Corollary~\ref{coro:fractal_connected} in Section~\ref{subsec:applis_topo}).

\subsection{Number-theoretical implications}
\label{subsec:applis_nt}

\paragraph{Fractal tiles for cubic number fields}
By a result of Paysant-Le-Roux and Dubois~\cite{PD84} together with Corollary~\ref{coro:dynprod},
we are able to associate natural codings with respect to a domain and a partition provided
by Rauzy fractals with every cubic field.

\begin{theo}
\label{theo:cubicfrac}
For every cubic extension $\bbK$ of $\bbQ$,
there exist $\alpha, \beta \in \bbK$ and a three-letter unimodular irreducible Pisot substitution $\sigma$
such that $\bbK = \bbQ(\alpha, \beta)$
and such that the toral translation $(\bbT^2, x \mapsto x + (\begin{smallmatrix}\alpha \\ \beta\end{smallmatrix}))$
is measure-theoretically conjugate to $(X_\sigma, S)$.
Furthermore, it admits a symbolic natural coding with respect to a partition made of connected bounded remainder sets.
\end{theo}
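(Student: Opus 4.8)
The plan is to combine the Diophantine result of Paysant-Le-Roux and Dubois~\cite{PD84} with the dynamical consequences already established in Corollary~\ref{coro:dynprod}. The theorem of~\cite{PD84} states that in every cubic number field $\bbK$ there exists a vector $\bfv = (1, \alpha_0, \beta_0)$ with $\bbK = \bbQ(\alpha_0, \beta_0)$ whose Jacobi-Perron expansion is \emph{eventually periodic}; equivalently, up to finitely many initial digits (which we absorb using the shift-invariance of the construction), we may assume the Jacobi-Perron expansion of $\bfv$ is purely periodic with period $(a_1, b_1), \ldots, (a_n, b_n)$. By Proposition~\ref{prop:JP_conv} such a periodic admissible expansion corresponds to a genuine totally irrational vector, and the associated finite product $\sigma = \sJP_{a_1, b_1} \cdots \sJP_{a_n, b_n}$ is irreducible Pisot by Proposition~\ref{prop:JP_exp}; its dominant eigenvalue $\beta$ is a cubic Pisot unit with $\beta \in \bbK$, because $\bfv$ is (a multiple of) the dominant eigenvector of $\Ms$ and $\transp\Ms$-periodicity forces the entries of $\bfv$ to lie in $\bbQ(\beta) = \bbK$.

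First I would make precise how the toral translation arises. Since $\sigma$ is an admissible finite product of Jacobi-Perron substitutions, Corollary~\ref{coro:dynprod} applies directly: $(X_\sigma, S)$ has pure discrete spectrum, hence is measure-theoretically conjugate to a translation $R_\alpha$ on the two-torus $\bbT^2 = \bbR^2 / L$, where $L = \pic(\bbZ^3)$ is the projection of $\bbZ^3$ onto the contracting plane of $\Ms$ along the expanding eigendirection, and the translation vector is given by the images $\pic(\bfe_i)$ of the canonical basis vectors modulo $L$. Writing this translation in coordinates on $\bbT^2$ produces a vector $(\begin{smallmatrix}\alpha \\ \beta\end{smallmatrix})$ whose components are explicit $\bbZ$-linear (indeed $\bbQ$-linear) combinations of the coordinates of $\pic(\bfe_i)$, which in turn are algebraic expressions in $\beta$; thus $\alpha, \beta' \in \bbK$ and $\bbQ(\alpha, \beta') = \bbK$ after possibly adjusting the choice of fundamental domain and basis so that the two generators together generate the field. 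The Rauzy fractal $\mcT_\sigma$ serves as the fundamental domain for $L$, its subtiles $\mcT_\sigma(1), \mcT_\sigma(2), \mcT_\sigma(3)$ form the finite partition, and on each subtile the map $R_\alpha$ restricts to the translation by $\pic(\bfe_i)$ — this is exactly the domain-exchange description recalled in Section~\ref{subsec:dynamics}, which becomes a genuine toral translation precisely because the tiling property (equivalent to pure discrete spectrum) holds. The bounded-remainder-set and connectedness assertions are then immediate from Corollary~\ref{coro:dynprod} and Corollary~\ref{coro:fractal_connected}.

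The steps, in order, are: (1) invoke~\cite{PD84} to obtain generators $\alpha_0, \beta_0$ of $\bbK$ with eventually periodic Jacobi-Perron expansion, and pass to the periodic tail; (2) check admissibility of the period and apply Propositions~\ref{prop:JP_conv} and~\ref{prop:JP_exp} to get an irreducible Pisot substitution $\sigma$ whose dominant eigenvalue lies in $\bbK$; (3) apply Corollary~\ref{coro:dynprod} to conclude pure discrete spectrum and the existence of the symbolic natural coding by the Rauzy fractal and its subtiles; (4) translate the abstract conjugacy into the concrete statement about a translation on $\bbT^2$ by a vector $(\begin{smallmatrix}\alpha\\\beta\end{smallmatrix})$ with $\bbQ(\alpha,\beta) = \bbK$, by computing $\pic(\bfe_i) \bmod L$ and verifying the field-generation claim; (5) read off the bounded-remainder and connectedness properties from Corollaries~\ref{coro:dynprod} and~\ref{coro:fractal_connected}.

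The main obstacle is step~(4): extracting, from the measure-theoretic conjugacy $(X_\sigma,S) \cong (\bbT^2/L, R_\alpha)$, a pair of explicit generators $\alpha, \beta$ of the field $\bbK$ and verifying that they indeed generate $\bbK$ and not merely a proper subfield (which, since $\bbK$ is cubic, could only be $\bbQ$, but one must still rule this out). This requires choosing coordinates on the contracting plane and a basis of the lattice $L = \pic(\bbZ^3)$ compatibly, and showing that the translation vector is not rational — equivalently that $\bfv$ is totally irrational, which is guaranteed because the Jacobi-Perron expansion is infinite and admissible (Proposition~\ref{prop:JP_conv}), and that the two coordinates of the translation vector together with $\bbQ$ span a degree-$3$ extension, which follows from $\beta$ being a cubic algebraic integer and the entries of the eigenvector being non-degenerate $\bbQ(\beta)$-combinations. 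A secondary technical point is handling the pre-periodic part of the Jacobi-Perron expansion cleanly; this is resolved by noting that applying finitely many initial substitutions only changes the Rauzy fractal and the lattice by an invertible linear map with entries in $\bbK$, preserving all the relevant properties and the field generated.
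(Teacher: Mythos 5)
There is a genuine gap, and it sits exactly where you locate the ``main obstacle'', but it is more serious than a bookkeeping issue: it is the transpose/order-of-product mismatch in your step~(2). If $\bfv$ has purely periodic Jacobi--Perron expansion $(a_1,b_1),\ldots,(a_n,b_n)$, then periodicity gives $\bfv\propto \bfM_{a_1,b_1}\cdots \bfM_{a_n,b_n}\bfv$, and since $\bfM_{a,b}=\transp{\bfM_{\sJP_{a,b}}}$ this matrix is $\transp{\bfM_\tau}$ for the \emph{reversed} product $\tau=\sJP_{a_n,b_n}\cdots\sJP_{a_1,b_1}$, not for your $\sigma=\sJP_{a_1,b_1}\cdots\sJP_{a_n,b_n}$. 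Forward and reversed products of three or more of these (non-symmetric) matrices generally have different traces, hence different characteristic polynomials, so the Pisot eigenvalue of your $\sigma$ is a priori unrelated to $\bbK$. This matters because the translation vector you extract in step~(4) is (in lattice coordinates) the normalized entry vector of the \emph{expanding eigenvector of $\Ms$}, so the field it generates is $\bbQ(\lambda_\sigma)$ for the Perron root $\lambda_\sigma$ of $\Ms$; with the forward product you have no argument that $\bbQ(\lambda_\sigma)=\bbK$ rather than some other cubic field. Replacing $\sigma$ by the reversed product $\tau$ restores the field link but breaks the hypothesis of Corollary~\ref{coro:dynprod}: Jacobi--Perron admissibility ($a_k=b_k\Rightarrow a_{k+1}\neq 0$) is a condition on successors and is not invariant under reversing the digit sequence, and admissibility is what the paper's generation results (hence pure discrete spectrum) require. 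So the central claim $\bbQ(\alpha,\beta)=\bbK$ is not established by your argument; the total-irrationality observation only rules out $\bbQ$, which is not the issue.

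The paper avoids this trap by using a different statement from~\cite{PD84} (Proposition~IV): every cubic field is generated by a Pisot unit $\beta$ with minimal polynomial $X^3-c_2X^2+c_1X-1$, $c_2\geq 2c_1-2$, $c_1\geq 3$. It then writes down the explicit admissible product $\sigma=\sJP_{0,1}\sJP_{0,1}\sJP_{c_1-3,c_2-c_1}$, whose incidence matrix has exactly this characteristic polynomial, so the relevant eigenvalue generates $\bbK$ by construction; admissibility is checked directly from the inequalities on $c_1,c_2$. It then identifies the translation concretely by projecting along the positive eigenvector onto the antidiagonal plane, where the lattice becomes the standard one and the translation vector is literally $(\bfv_2,\bfv_3)$, two entries of the normalized eigenvector (the change of projection relative to $\pic$ being justified via Corollary~\ref{coro:dynprod} and~\cite{IR06}). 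Your step~(4) is essentially an implicit version of this last computation and, together with steps~(3) and~(5), would be fine; but to repair the proof along your lines you would either have to show that your chosen (admissible) product has Perron root generating $\bbK$ --- which the periodicity of the expansion of $\bfv$ does not give --- or show that an admissible period can always be chosen whose reversal is also admissible, neither of which is addressed.
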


\begin{proof}
According to~\cite[Proposition IV]{PD84}, any cubic field is generated by some Pisot number $\beta$
with minimal polynomial $P(X)=X^3 - c_2 X^2+c_1 X-1=0$,
where $c_2 \geq 2 c_1-2$ and $c_1 \geq 3$.
If we set $\sigma = \sJP_{0,1}\sJP_{0,1}\sJP_{c_1-3,c_2-c_1}$
(an admissible product because $c_2-c_1 >c_1-2$), then $\Ms$ has characteristic polynomial $P(X)$.
Since $\Ms$ is primitive, it admits a positive eigenvector $\bfv$ with $\bfv_1+\bfv_2+\bfv_3=1$
and we have $\bbQ(\bfv_2,\bfv_3)=\bbK$.
The Jacobi-Perron expansion of $\bfv$ has period $(c_1-3,c_2-c_1)(0,1),(0,1)$
(this is an admissible expansion).
Now consider the projection $\pi_{\bfv,\mathbf 1^\bot}$ along $\bfv$ onto the antidiagonal plane with normal vector $(1,1,1)$.
This projection expresses as
\[
\pi_{\mathbf {v}, \mathbf 1^\bot} (x,y,x)
= (\bfv_2(x+y+z)-x) (\bfe_3 - \bfe_1) + (\bfv_3(x+y+z)-y) (\bfe_3 - \bfe_2).
\]
In particular, the vector $\pi_{\mathbf {v},\mathbf{1}^\bot}({\bf e}_3)$ expressed in the basis
$(\bfe_3 - \bfe_1, \bfe_3 - \bfe_2)$ has coordinates $(\bfv_2,\bfv_3)$.
Consider the translation by the vector $(\pi_{\bfv,\mathbf 1^\bot} \bfe_3)$
modulo the lattice generated by $\pi_{\bfv,\mathbf 1^\bot} (\bfe_3 - \bfe_1)$
and $\pi_{\bfv,\mathbf 1^\bot} (\bfe_3 - \bfe_2)$.
It is measure-theoretically conjugate to the translation by $(\bfv_2,\bfv_3)$ modulo $\bbZ^2$.
The projection $\pi_{\bfv,\mathbf 1^\bot}$ can be used to define a Rauzy fractal for $\sigma$
instead of the projection $\pi_\sigma$ used in the definition of Rauzy fractals in Section~\ref{subsec:dynamics}.
This means that we are working with a domain exchange whose translation vectors are given by $\pi$ instead of $\pic$.
According to Corollary~\ref{coro:dynprod}, the domain exchange $(\mcT_\sigma(i),E)$ (obtained with $\pic$) factors onto a translation,
so the domain exchange provided by $\pi$ also factors onto a translation~\cite{IR06}.
This yields the desired result by taking $\alpha = \bfv_2, \beta = \bfv_3$.
\end{proof}

Observe that the existence of a symbolic natural coding
provides equidistribution properties for the associated translation.
See~\cite{Adam04} for an illustration in the case of one-dimensional translations.

\paragraph{Number-theoretical applications of topological properties of Rauzy fractals}
Several of the topological properties of Rauzy fractals have direct number-theoretical consequences.
Consider first the framework of (Pisot) $\beta$-numeration. The connectedness of the Rauzy fractal
(called central tile in this context) is conjectured to guarantee explicit relations between
the norm of $\beta$ and the $\beta$-expansion of $1$~\cite{AG05}.
Furthermore, the properties of rational numbers with purely periodic $\beta$-expansions
are now known to be closely related to the shape of the boundary of the Rauzy fractal~\cite{ABBS08,AFSS10}.
More generally, at the interplay between substitutions and Diophantine approximation for cubic number fields,
explicit computation of the size of the largest ball contained in the Rauzy fractal is used in~\cite{HM06}
to determine the sequence of best approximations with respect to a specific norm,
for some two-dimensional vectors provided by non-totally real cubic Pisot units.
See also~\cite{Ito03,ItoYasu07} for a study of the limit set of the points $\sqrt q (\|q \alpha\|, \|q \beta\|$),
with $q$ being a positive integer, and with $(\alpha,\beta)$ such that $(1,\alpha,\beta)$ forms
a basis of a non-totally real cubic number field.
This limit set is known to be a union of homothetic ellipses centered at the origin.
The question of when the closest of these ellipses is provided by Brun algorithm is investigated in~\cite{Ito03,ItoYasu07}.

The fact that the origin is an interior point of the Rauzy fractal is also a particularly interesting property.
It is closely related to the so-called finiteness properties in numeration.
Indeed, the \emph{(F)~property} (introduced and called finiteness property in~\cite{FS92}),
which expresses some finiteness properties of digital expansions in a Pisot base $\beta$,
is equivalent to the fact that the origin is an interior point of the central tile
associated with $\beta$~\cite{Aki99,Aki02}.
Several variants of the (F)~property have then been proposed,
one of them being the \emph{extended (F)~property} introduced in~\cite{BS05,FT06}
to extend the classical (F)~property to the numeration systems associated with substitutions
(called the Dumont-Thomas numeration).
The extended (F)~property can again be stated in topological terms:
it holds if and only if the origin is an interior point
of the Rauzy fractal associated with $\sigma$ (see~\cite{SieT09,CANT}).
Consequently, Corollary~\ref{coro:fractal_zero} above,
which characterizes the products of Arnoux-Rauzy, Brun and Jacobi-Perron substitutions
for which the origin is an interior point of the Rauzy fractal,
also provides a characterization of when such a finite product of substitutions satisfies the extended (F)~property.
Furthermore, this is a sufficient condition for pure discrete spectrum which can also be interpreted as
a coincidence type condition~\cite{CANT}.

\bibliographystyle{amsalpha}
\bibliography{biblio}

\end{document}